\documentclass{amsart}

\usepackage{amsmath, amssymb, amsthm, color}
\usepackage[all]{xy}
\usepackage {enumitem}

\newtheorem{thm}{Theorem}[section]
\newtheorem*{introthm}{Theorem}
\newtheorem{lemma}[thm]{Lemma}
\newtheorem{prop}[thm]{Proposition}
\newtheorem{cor}[thm]{Corollary}

\theoremstyle{definition}
\newtheorem{defn}[thm]{Definition}

\newtheorem{remark}[thm]{Remark}

\DeclareMathOperator{\Char}{Char}

\DeclareMathOperator{\Ass}{Ass}
\DeclareMathOperator{\Ht}{Height}
\DeclareMathOperator{\Spec}{Spec}

\newcommand{\R}{\widehat R}
\newcommand{\rt}{\widetilde r}
\newcommand{\of}{\overline f}
\newcommand{\og}{\overline g}
\newcommand{\oog}{\overline{\overline g}}
\newcommand{\oh}{\overline h}
\newcommand{\ooh}{\overline{\overline h}}

\newcommand{\m}{\mathfrak{m}}

\renewcommand{\phi}{\varphi}

\DeclareMathOperator{\Ker}{Ker}
\DeclareMathOperator{\Image}{Image}

\renewcommand{\to}{\longrightarrow}

\title{Are complete intersections complete intersections?}
\author{Raymond C. Heitmann and David A. Jorgensen}
\address{Raymond C. Heitmann \\ Department of Mathematics \\ University of Texas at Austin \\
Austin, TX \\ 78712}
\address{David A. Jorgensen \\ Department of Mathematics \\ University of Texas at Arlington \\
Arlington, TX \\ 76019}
\date{\today}
\keywords{Complete intersection, completion}
\subjclass[2000]{13J10, 13C40, 14M10}

\begin{document}

\thanks{The first author was partially supported by NSF grant DMS-0856124. The second author was partially supported by NSA grant H98230-10-0197}

\begin{abstract}
A commutative local ring is generally defined to be a complete intersection if its completion is
isomorphic to the quotient of a regular local ring by an ideal generated by a regular sequence.
It has not previously been determined whether or not such a ring is necessarily itself the quotient of a regular ring by an ideal generated by a regular sequence.
In this article, it is shown that if a complete intersection is a one dimensional integral domain, then it is such a quotient.
However, an example is produced of a three dimensional complete intersection domain which is not a
homomorphic image of a regular local ring, and so the property does not hold in general.
\end{abstract}

\maketitle

\section*{Introduction}

Let $R$ be a commutative local Noetherian ring, and
let $\widehat R$ denote the completion of $R$ with respect to the
topology defined by the maximal ideal of $R$.
Let us say that the \emph{absolute} definition of $R$ being a complete intersection is that $R$ is
isomorphic to the quotient of a regular local ring by an ideal generated
by a regular sequence,
and let us say that the \emph{formal} definition of $R$ being a complete intersection is that $\R$ is
a complete intersection in the absolute sense.
The formal definition of complete intersection was given in 1967 by Grothendieck in
\cite[19.3.1]{ega}.
Since completion with respect to the maximal ideal of a Noetherian local
ring defines a faithfully flat functor, this definition works well for
most applications and has been predominantly adopted in
recent years as \emph{the} definition of complete intersection in commutative ring theory.
Nevertheless, the absolute definition also occurs frequently in the literature.
In spite of one's preference of definition, it has remained a bane that it is unknown whether the absolute and formal notions of complete intersection are in fact equivalent.
In this paper we give answers which shed light on this question.

In Section 1 below we show that the two notions are the same when
$R$ is a one dimensional integral domain (of arbitrary codimension).  On the
other hand, we show in Section 2 by way of example that there are
commutative local Noetherian rings $R$ whose completion is isomorphic to the quotient of a
regular local ring modulo an ideal generated by a regular sequence, but the ring
itself is not the homomorphic image of a regular local ring. We note that it is well-known
(see, for example, \cite[19.3.2]{ega})
that if $R$ is a complete intersection in the formal sense, then it being the homomorphic image of a regular local ring is equivalent to it being a complete intersection in the absolute sense.  In the
example, $\R=\mathbf{R}[[x,y,z,w]]/(x^2+y^2)$, and so $R$ is an integral domain
of dimension three.  The main question thus remains unanswered when $R$ is of dimension 2,
or dimension 1 and not an integral domain.

The proofs in both Sections 1 and 2 rely on the following basic theorem.

\begin{introthm} Consider a diagram of commutative local ring homomorphisms
\[
\xymatrixrowsep{2pc}
\xymatrixcolsep{4pc}
\xymatrix{
{} & T \ar@{->>}[d]^{\pi}\\
R \ar[r]^\subseteq & \R \\
}
\]
where $(T,\m)$ is a complete regular local ring with dimension equaling the embedding dimension of $R$, and $\pi$ is surjective with kernel generated by a regular sequence contained in $\m^2$.
Then $R$ is isomorphic to the quotient of a regular local ring by an ideal generated by a regular
sequence if one can complete this diagram to a commutative diagram of local ring homomorphisms
\[
\xymatrixrowsep{2pc}
\xymatrixcolsep{4pc}
\xymatrix{
S \ar@{.>}[r]^\subseteq\ar@{.>>}[d]_{\pi\vert_S} & T \ar@{->>}[d]^{\pi}\\
R \ar[r]^\subseteq & \R \\
}
\]
where $S$ is a regular local ring containing a generating set of $\ker\pi$, whose completion is naturally
isomorphic to $T$, and $\pi\vert_S$ is a surjection. The converse holds if $R$ contains a field of characteristic zero.
\end{introthm}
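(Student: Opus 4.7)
The plan is to treat the two directions separately. The forward direction follows from faithful flatness of the completion map $S \hookrightarrow \widehat S = T$, while the converse hinges on the uniqueness of Cohen presentations of $\R$ in the equicharacteristic-zero setting.

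\textbf{Forward direction.} Assume the completed diagram is given, and write $\ker\pi = (f_1, \dots, f_c)T$ with $f_1, \dots, f_c$ a regular sequence in $T$ lying in $S$ (after thinning a given generating set to a minimal one; since $T$ is Cohen--Macaulay and $\ker\pi$ has height $c$, any $c$-element minimal generating set is a regular sequence). First, $f_1, \dots, f_c$ is also a regular sequence in $S$, because $S \to \widehat S = T$ is faithfully flat. Second, $\ker(\pi|_S) = (f_1, \dots, f_c)S$: the $\m$-adic completion of the Noetherian local ring $S/(f_1, \dots, f_c)S$ is canonically $T/(f_1, \dots, f_c)T = \R$, so Krull's intersection theorem embeds $S/(f_1, \dots, f_c)S$ into $\R$, and this embedding factors through the surjection $S/(f_1, \dots, f_c)S \twoheadrightarrow R$ induced by $\pi|_S$ followed by $R \hookrightarrow \R$. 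Hence this surjection is injective, giving $R \cong S/(f_1, \dots, f_c)S$, a quotient of a regular local ring by a regular sequence.

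\textbf{Converse direction.} Suppose $R$ contains a field of characteristic zero and $R \cong A/(a_1, \dots, a_c)$ with $A$ regular local and $(a_1, \dots, a_c)$ a regular sequence. A preliminary reduction, reordering and adjusting the generators so that those lying outside $\n_A^2$ form part of a regular system of parameters and then quotienting them out of $A$, lets us assume $(a_1, \dots, a_c) \subseteq \n_A^2$ and therefore $\dim A = \operatorname{embdim}(R) = \dim T$. The central step is to produce an isomorphism $\varphi : \widehat A \xrightarrow{\sim} T$ that commutes with the surjections to $\R$. Using Cohen's structure theorem in equicharacteristic zero, I would choose a coefficient field $k \subseteq \R$ and lift it to compatible coefficient fields $k_{\widehat A} \subseteq \widehat A$ and $k_T \subseteq T$; choose a regular system of parameters $x_1, \dots, x_e$ of $T$; and lift the images $\pi(x_i) \in \R$ to elements $y_1, \dots, y_e \in \widehat A$, which automatically form a regular system of parameters of $\widehat A$ because $\widehat A \twoheadrightarrow \R$ induces an isomorphism on cotangent spaces. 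Identifying $\widehat A$ with $k_{\widehat A}[[y_1, \dots, y_e]]$ and $T$ with $k_T[[x_1, \dots, x_e]]$, define $\varphi$ by $y_i \mapsto x_i$ and $k_{\widehat A} \to k_T$ via the common identification with $k$; $\m$-adic continuity together with agreement on the dense subring generated by the coefficient field and the parameters forces $\pi \circ \varphi$ to equal the projection $\widehat A \twoheadrightarrow \R$. Then $S := \varphi(A) \subseteq T$ is a regular local ring isomorphic to $A$ whose completion is canonically $\varphi(\widehat A) = T$, the restriction $\pi|_S$ factors as $A \twoheadrightarrow R \hookrightarrow \R$, and the elements $\varphi(a_1), \dots, \varphi(a_c) \in S$ generate $\ker\pi$.

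\textbf{The main obstacle.} The delicate step is the construction of $\varphi$, which amounts to the assertion that any two Cohen presentations of $\R$ (of minimal dimension, with kernel in the square of the maximal ideal) are isomorphic via a map compatible with the projections onto $\R$. Characteristic zero is used here precisely to guarantee that coefficient fields exist and can be chosen compatibly through any surjection of complete local rings. In mixed characteristic one would have to lift Cohen coefficient rings rather than fields, where genuine obstructions to such lifts can intervene; this presumably accounts for the restriction in the statement.
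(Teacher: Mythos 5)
Your forward direction is correct, and in fact more detailed than the paper, which dismisses that implication as obvious: extracting a minimal generating set $f_1,\dots,f_c\subseteq S$ of $\ker\pi$, using faithful flatness of $S\to\widehat S=T$ to see it is a regular sequence in $S$ and that $\ker(\pi|_S)=(f_1,\dots,f_c)S$ via Krull's intersection theorem applied to $S/(f_1,\dots,f_c)S\hookrightarrow \R$, is exactly right. The converse, however, has a genuine gap at precisely the point on which the paper spends essentially its entire proof. You assert that ``by Cohen's structure theorem in equicharacteristic zero'' the chosen coefficient field $k\subseteq\R$ can be lifted to coefficient fields of $\widehat A$ and of $T$ \emph{compatibly} with the two surjections onto $\R$. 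Cohen's theorem only gives existence of some coefficient field of $T$; coefficient fields are highly non-unique, and an arbitrary one need not map into $k$ under $\pi$ --- its image is merely some coefficient field of $\R$, generally different from $k$. The statement you actually need (a coefficient field of the quotient lifts through a given surjection of complete local rings when the residue characteristic is zero) is true, but it is exactly the nontrivial content here, and you neither prove it nor invoke a result that contains it; labeling it ``the main obstacle'' and attributing it to characteristic zero does not discharge it. The paper's proof consists of establishing this: lift a transcendence basis of the coefficient field $F$ of $\R$ to a set $B\subseteq T$, form $K=\mathbf{Q}(B)$ and its algebraic closure $L$ in $T$ (a maximal subfield, hence a coefficient field of $T$), and then use separability --- factoring the minimal polynomial as $g(Z)=(Z-e)h(Z)$ with $h(e)$ a unit and comparing the two images of $e$ in $\R$, whose difference lies in the maximal ideal --- to force $\pi(L)=F$. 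Alternatively one could appeal to formal smoothness of characteristic-zero field extensions and $\ker\pi$-adic completeness of $T$ to lift $k\to\R$ through $\pi$, but some argument of this kind must be supplied.

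Granting that lemma, the remainder of your construction (the reduction to $\ker\subseteq$ square of the maximal ideal, the regular system of parameters of $\widehat A$ lifting the $\pi(x_i)$, the isomorphism $\varphi$ pinned down by continuity on a dense subring, and $S=\varphi(A)$) coincides with the paper's argument and is correct. One small structural point: as you have set things up you need the lifting lemma twice (for $\widehat A\to\R$ and for $T\to\R$), whereas the paper needs it only once, since it defines $F$ to be the image in $\R$ of a coefficient field of $\widehat{S'}$ and then lifts only into $T$.
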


\begin{proof}
Of course only the last statement of the theorem is not obvious.
To see the last statement, assume that $R$ is isomorphic
to $S'/I'$ where $S'$ is a regular local ring, and $I'$ is an ideal of $S'$ generated by a regular sequence.  
Without loss of generality we can assume that the embedding dimensions of $R$ and $S'$ are the same. Let
$T'$ denote $\widehat{S'}$.
By Cohen's structure theorem for complete local rings of characteristic zero, $T'\cong F[[X_1,\ldots,X_n]]$ where $F\cong T/M$ is a coefficient field.
We can harmlessly identify $T'$ with its isomorphic copy and also write $\widehat R=F[[x_1,\ldots,x_n]]$ where the surjection $T'\to \widehat R$ is the obvious map taking $X_i$ to $x_i$.
We claim there exists an isomorphism
$\varphi:T'\to T$ such that the surjection $T'\to \widehat R$ factors through $\pi$.
Assuming the claim, the subring $S=\varphi(S')$ of $T$ then completes the diagram in the desired fashion.

To see the claim, we first construct a field $E$ such that $E\subseteq F$ is generated over $\mathbf Q$ by a set $A$ of algebraically independent elements and such that $F$ is algebraic over $E$.
Let $B\subset T$ be chosen so that $\pi|_B:B\to A$ is a bijection.
Then $B$ is algebraically independent over $\mathbf Q$ and so $K=\mathbf Q(B)$ is a field contained in $T$ satisfying $\pi|_K:K\to E$ is an isomorphism.
Now let $L$ be the algebraic closure of $K$ in the integral domain $T$.
Then $L$ is certainly a field.
Since $T\to T/M$ factors through $\pi$, $T/M$ is algebraic over $K$ and so $L$ is necessarily a maximal subfield of $T$.
Recalling the proof of Cohen's Theorem, this forces $L$ to be a coefficient field of $T$ and so
we have $T= L[[Y_1,\ldots,Y_n]]$ for some $Y_1,\dots,Y_n\in T$.
We can even choose $Y_1,\ldots,Y_n$ such that $\pi(Y_i)=x_i$ for each $i$.
The last key step is to see that $\pi(L)=F$; for this, it is enough that $\pi(L)\subseteq F$.
Consider any $e\in L$.
Then, using separability,  $e$ satisfies an irreducible monic polynomial $g(x)\in K[Z]$ and we can factor $g(Z)=(Z-e)h(Z)$ with $h(Z)\in L[Z]$ and $h(e)\in L$ a unit.
We have an injection $\widetilde \pi:L\to \widehat R$ given by the composition
$L\to \widehat R\to T/M\to F\to \widehat R$ and we let
$\og(Z), \oh(Z), \oog(Z),\ooh(Z)\in \widehat R[Z]$ denote the images of the respective polynomials under $\pi$ and $\widetilde\pi$ respectively.
It is obvious that $\oog(Z)=\og(Z)$, but we will not know $\ooh(Z)=\oh(Z)$ until we have
actually proved our claim.
Now $0=\og(\pi(e))=\oog(\pi(e))=(\pi(e)-\widetilde\pi(e))\ooh(\pi(e))$.
As $\ooh(\pi(e))-\ooh(\widetilde\pi(e))$ is divisible by the non-unit $\pi(e)-\widetilde\pi(e)$,
$\ooh(\pi(e))$ is a unit and it follows that $\pi(e)-\widetilde\pi(e)=0$ and indeed $\pi(e)\in F$.
Finally there is a unique homomorphism $\varphi:T'\to T$ such that $\varphi|_F$ is the inverse of $\pi|_L$
and $\varphi(X_i)=Y_i$ and this map is what we need to complete the proof of the claim.
\end{proof}

\section{One Dimensional complete intersection domains are complete intersection domains}

In this section we prove the diagram of the theorem in the introduction
can always be completed provided $R$ is a one dimensional integral domain.
The following result from \cite{sing} will help us find such an $S$.

\begin{prop}\label{prop1} $($\cite[Proposition 1]{sing}$)$
Let $(S,\m\cap S)$ be a quasi-local subring of a complete local ring
$(T,\m)$.  Then $S$ is Noetherian and the natural map
$\widehat S \to T$ is an isomorphism if and only if
$S\to T/\m^2$ is onto and $IT\cap S=I$ for every finitely
generated ideal $I$ of $R$.
\end{prop}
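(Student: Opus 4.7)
The plan is to handle the two directions separately, with the forward direction being essentially formal and the converse carrying all the content.

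For the forward direction, assume $S$ is Noetherian and $\widehat S \to T$ is an isomorphism. Then $S\to \widehat S$ is faithfully flat, so $IT\cap S = I\widehat S\cap S = I$ for every ideal $I$ of $S$. Surjectivity of $S\to T/\m^2$ comes from the standard isomorphism $S/(\m\cap S)^2 \cong \widehat S/(\m\cap S)^2\widehat S$, combined with the identification of $(\m\cap S)\widehat S$ with $\m$ under the assumed isomorphism.

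For the converse, I would proceed in four steps. \textbf{Step 1.} Lift a $T/\m$-basis of $\m/\m^2$ to elements $s_1,\dots,s_n\in\m\cap S$ using the level-$2$ surjectivity. By Nakayama in the Noetherian ring $T$, these generate $\m$ as a $T$-ideal; then the contraction hypothesis applied to the finitely generated ideal $(s_1,\dots,s_n)S$ gives $\m\cap S = (s_1,\dots,s_n)S$. \textbf{Step 2.} Since $\m^k = (\m\cap S)^k T$, applying the contraction hypothesis again yields $\m^k\cap S = (\m\cap S)^k$ for every $k$, so the $\m$-adic topology on $T$ restricts to the $(\m\cap S)$-adic topology on $S$, and $S/(\m\cap S)^k\hookrightarrow T/\m^k$. \textbf{Step 3.} Show $S$ is Noetherian: for an arbitrary ideal $I\subseteq S$, the extension $IT$ is finitely generated in $T$ and can be generated by finitely many elements of $I$; letting $I_0\subseteq I$ be the finitely generated subideal they span, we get $I_0T = IT$, hence by hypothesis $I_0 = I_0T\cap S \supseteq I$, forcing $I = I_0$. \textbf{Step 4.} Upgrade the surjectivity hypothesis to every level by induction on $k$: given $t\in T$ and a lift $s\in S$ with $t-s\in\m^k$, expand $t-s = \sum s_{i_1}\cdots s_{i_k}\tau_{\mathbf{i}}$ with $\tau_{\mathbf{i}}\in T$, approximate each $\tau_{\mathbf{i}}$ modulo $\m$ by some $\sigma_{\mathbf{i}}\in S$ (using the base case), and replace $s$ by $s + \sum s_{i_1}\cdots s_{i_k}\sigma_{\mathbf{i}}$ to obtain a lift of $t$ modulo $\m^{k+1}$.

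Combining these steps, each $S/(\m\cap S)^k\to T/\m^k$ is both injective (by Step 2) and surjective (by Step 4), and passing to inverse limits gives $\widehat S \cong T$. The step I expect to require the most care is the Noetherian argument of Step 3: the contraction condition is assumed only for \emph{finitely generated} ideals, and the trick is to realize that any ideal $I$ and a suitably chosen finitely generated subideal of $I$ extend to the same ideal of $T$, which then forces them to coincide back in $S$.
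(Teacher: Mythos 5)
Your argument is correct. Note, however, that the paper does not prove this proposition at all: it is quoted verbatim from \cite[Proposition 1]{sing}, so there is no in-paper proof to compare against; what you have written is a self-contained verification of the cited result. Each step checks out: the forward direction is the standard faithful-flatness argument (with the observation that the natural isomorphism $\widehat S\to T$ carries $(\m\cap S)\widehat S$ onto $\m$); Step 1 correctly produces $\m\cap S=(s_1,\dots,s_n)S$ with $(s_1,\dots,s_n)T=\m$ via Nakayama in $T$ plus the contraction hypothesis; Step 2 gives $\m^k\cap S=(\m\cap S)^k$ because $(\m\cap S)^k$ is finitely generated once Step 1 is done; Step 3 is exactly the right use of Noetherianity of $T$ (choose finitely many elements of $I$ generating $IT$, then contract), and you correctly identified this as the point where the restriction to finitely generated ideals in the hypothesis must be reconciled with arbitrary ideals of $S$; Step 4's successive-approximation argument, using that the degree-$k$ monomials in $s_1,\dots,s_n$ generate $\m^k$ over $T$ and that $S\to T/\m$ is onto, upgrades surjectivity to every level, and the inverse-limit conclusion is legitimate since $T$ is $\m$-adically complete and $S$ is by then a Noetherian local ring. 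The only blemish is inherited from the statement as printed: the contraction condition should read ``for every finitely generated ideal $I$ of $S$'' (the ``$R$'' is a typo), and your proof correctly uses it in that form.
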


We will refer to the condition that $IT\cap S=I$ for every finitely generated ideal $I$ of $R$ by saying that
finitely generated ideals of $R$ are \emph{closed} (with respect to $T$).

\begin{lemma}\label{lemma2}
Let $(T,\m)$ be a local ring and let $f_1,\dots,f_k,s_1,\ldots,s_n$ be a regular sequence in $T$.
Set $K=(f_1,\dots,f_k)T$ and $B=T/K^{i+1}$.
Let $\{\sigma_j\}$ be the set of all distinct monomials of degree $i$ in the generators of $K$,
and let $\overline{\sigma}_j, \overline{s}_i$ denote the respective images in $B$.
If we have an equation $\sum{\overline{a}_j\overline{\sigma}_j}+\sum{\overline{b}_i\overline{s}_i}=0$ in $B$, for $a_j, b_i\in T$, then for each $j$, there exists $\alpha_j\in T$ with $\overline{\alpha}_j\in (\overline{s}_i\ldots,\overline{s}_n)B$ such that $\overline{\alpha}_j\overline{\sigma}_j=\overline{a}_j\overline{\sigma}_j$.
\end{lemma}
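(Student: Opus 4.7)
The plan is to lift the hypothesized equation from $B$ up to an inclusion $\sum a_j \sigma_j + \sum b_i s_i \in K^{i+1}$ in $T$, reduce modulo $(s_1,\dots,s_n)$ to kill the second sum, and then exploit the fact that the associated graded ring of a regular-sequence ideal is a polynomial ring.

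First I would pass to the quotient $T' = T/(s_1,\dots,s_n)T$, writing bars for images in $T'$. Because $T$ is local, the regular sequence $f_1,\dots,f_k,s_1,\dots,s_n$ is permutable, so $f_1,\dots,f_k$ remains a regular sequence in $T'$. Setting $K' = KT'$, the lifted relation reduces to $\sum \overline{a}_j \overline{\sigma}_j \in K'^{i+1}$, since the $\sum b_i s_i$ term dies in $T'$ and the $K^{i+1}$ error lies in $K'^{i+1}$.

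Next I would invoke the standard fact that when $f_1,\dots,f_k$ is a regular sequence, $\bigoplus_{m\ge 0} K'^m/K'^{m+1}$ is isomorphic to the polynomial ring $(T'/K')[X_1,\dots,X_k]$; in particular the degree-$i$ monomials $\overline{\sigma}_j$ form a free $T'/K'$-basis of $K'^i/K'^{i+1}$. Consequently each $\overline{a}_j$ must lie in $K'$. Lifting back to $T$, this means $a_j \in K + (s_1,\dots,s_n)T$, so we may write $a_j = \sum_l d_{jl}f_l + \alpha_j$ with $\alpha_j \in (s_1,\dots,s_n)T$.

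Finally, $(a_j-\alpha_j)\sigma_j = \sum_l d_{jl}f_l\sigma_j$ is a $T$-linear combination of monomials of degree $i+1$ in the generators of $K$, hence lies in $K^{i+1}$. Therefore $\overline{a}_j\overline{\sigma}_j = \overline{\alpha}_j\overline{\sigma}_j$ in $B$, and by construction $\overline{\alpha}_j \in (\overline{s}_1,\dots,\overline{s}_n)B$, as required. The main technical point is the reduction in the second paragraph: regularity of $f_1,\dots,f_k$ must survive the passage to $T/(s_1,\dots,s_n)T$, which is exactly where the permutability of regular sequences in a local ring is essential.
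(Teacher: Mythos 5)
Your proof is correct, but it takes a different route from the paper's. You first kill the $s$-terms by passing to $T'=T/(s_1,\dots,s_n)T$ (using permutability of regular sequences in a local ring so that $f_1,\dots,f_k$ stays regular there), and then you invoke quasi-regularity: for a regular sequence the associated graded ring of $K'$ is a polynomial ring over $T'/K'$, so the degree-$i$ monomials are a free basis of $K'^i/K'^{i+1}$ and each $\overline a_j$ must lie in $K'$, i.e.\ $a_j\in K+(s_1,\dots,s_n)T$. The paper instead works entirely inside $T$: it first absorbs the $K^{i+1}$ error into the coefficients using $K^{i+1}=(\{\sigma_j\}T)K$, then for a fixed $\sigma_j=\prod f_l^{e_l}$ observes that every other monomial of degree $i$ lies in $(f_1^{e_1+1},\dots,f_k^{e_k+1})T$, so $\sigma_j(a_j+c_j)\in(f_1^{e_1+1},\dots,f_k^{e_k+1},s_1,\dots,s_n)T$, and a colon-ideal argument with the regular sequence $f_1,\dots,f_k,s_1,\dots,s_n$ gives $a_j+c_j\in(f_1,\dots,f_k,s_1,\dots,s_n)T$, hence the same membership $a_j\in K+(s_1,\dots,s_n)T$; from there both arguments finish identically, since $K\sigma_j\subseteq K^{i+1}$. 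Your version is shorter at the price of citing two standard theorems: quasi-regularity (which holds with no finiteness hypotheses) and permutability, which does require $T$ Noetherian local with the sequence in $\m$ — harmless here, since in every application $T$ is a complete Noetherian local ring, but worth flagging given the bare ``local ring'' in the statement. The paper's argument avoids permutability and the graded-ring machinery, staying closer to first principles about regular sequences and monomial colon ideals.
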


\begin{proof}
The equation $\sum{\overline{a}_j\overline{\sigma}_j}+\sum{\overline{b}_i\overline{s}_i}=0$ yields $\sum{a_j\sigma_j}+\sum{b_is_i}\in K^{i+1}$.
As $K^{i+1}=(\{\sigma_j\}T)K$, we get an equation $\sum{(a_j+c_j)\sigma_j}+\sum{b_is_i}=0$ with each $c_j\in K$.
Now fix $j$ and write $\sigma_j=\prod{f_i^{e_i}}$.
Every other $\sigma_i$ is necessarily contained in the ideal $(f_1^{e_1+1},\ldots, f_k^{e_k+1})T$ and so
$(\prod{f_i^{e_i}})(a_j+c_j)\in (f_1^{e_1+1},\ldots, f_k^{e_k+1},s_1,\dots,s_n)T$.
As $f_1,\dots,f_k,s_1,\ldots,s_n$ is regular, it is a straightforward demonstration to see that
$a_j+c_j\in (f_1,\dots,f_k,s_1,\ldots,s_n)T$.
Since $c_j\in K$, this gives that $a_j\in (f_1,\dots,f_k,s_1,\ldots,s_n)T=K+(s_1,\ldots,s_n)T$.
Since $(K/K^{i+1})\overline{\sigma}_j=0$ in $B$, the conclusion follows.
\end{proof}

\begin{lemma}
Let $(T,\m)$ be a Cohen-Macaulay complete local ring, let $f_1,\ldots,f_k$ be a regular sequence
contained in $\m^2$, and set $K=(f_1,\ldots , f_k)T$.  Suppose that
$A$ is a local subring of $T/K^i$ whose completion is naturally isomorphic to $T/K^i$, and
$R$ is a local subring of $T/K$ whose completion is naturally isomorphic to $T/K$ such that the
natural map $T/K^i\to T/K$ induces a surjection $A\to R$.  Assume that $B$ is a quasi-local subring of $T/K^{i+1}$
such that the following hold:
\begin{enumerate}
\item The natural map $T/K^{i+1}\to T/K^i$  induces a surjection
$B\to A$,
\item $f_1+K^{i+1},\ldots, f_k+K^{i+1}\in B$, and
\item $\Ker (B \to T/K)=(f_1+K^{i+1},\ldots, f_k+K^{i+1})B$.
\end{enumerate}
Then  $B$ is a local Noetherian ring whose completion is naturally isomorphic to $T/K^{i+1}$.
\end{lemma}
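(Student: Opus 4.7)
The plan is to verify the two conditions of Proposition \ref{prop1} for $B$ as a quasi-local subring of the complete local ring $T/K^{i+1}$. The first condition, that $B \to (T/K^{i+1})/(\m/K^{i+1})^2$ is surjective, reduces to $B \to T/\m^2$ being onto (since $K \subseteq \m^2$), which follows from the chain $B \twoheadrightarrow A \twoheadrightarrow R \twoheadrightarrow T/\m^2$: the first surjection is hypothesis (1), the second is assumed, and the third comes from Proposition \ref{prop1} applied to $R \subset T/K$ via $\widehat R \cong T/K$.

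The second condition is that every finitely generated ideal $I = (b_1,\ldots,b_m)B$ is closed. Given $y \in B$ with $y = \sum_j c_j b_j$ in $T/K^{i+1}$ (with $c_j \in T/K^{i+1}$), I would first project the equation down to $T/K^i$. The image $\overline y \in A$ then lies in $(\overline b_j)(T/K^i)$, and closure of finitely generated ideals in $A$ (via Proposition \ref{prop1} and $\widehat A \cong T/K^i$) gives $\overline y = \sum_j a_j \overline b_j$ with $a_j \in A$. Lifting each $a_j$ to $\widetilde a_j \in B$ by (1) and subtracting yields $y' := y - \sum_j \widetilde a_j b_j \in B \cap (K^i/K^{i+1})$ which still lies in $I(T/K^{i+1})$. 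Thus the problem reduces to showing that any such $y'$ lies in $I$.

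For this reduced case I would invoke Lemma \ref{lemma2}. Extend $f_1,\ldots,f_k$ to a regular sequence $f_1,\ldots,f_k,s_1,\ldots,s_n$ of $T$, possible by the Cohen--Macaulay hypothesis. Projecting $y' = \sum_j c_j' b_j$ to $T/K$, where $y'$ vanishes, produces a syzygy $\sum_j \overline c_j' \overline b_j = 0$ of the $\overline b_j$; by faithful flatness of $T/K$ over $R$ (a consequence of $\widehat R \cong T/K$), this syzygy is a $T/K$-combination of syzygies $(r_{lj})$ coming from $R$. Lifting each $r_{lj}$ to $\widetilde r_{lj} \in B$ via (1) and applying (3) to write $\sum_j \widetilde r_{lj} b_j = \sum_p e_{lp} \overline f_p$ with $e_{lp} \in B$, and then substituting and rearranging, puts the equation $y' = \sum_j c_j' b_j$ into the shape required by Lemma \ref{lemma2}. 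That lemma refines the $\overline\sigma_\alpha$-coefficients to lie in $(\overline s_1,\ldots,\overline s_n)B$; unwinding the substitutions should then present $y'$ as a $B$-linear combination of the $b_j$'s.

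The main obstacle will be this final unwinding: ensuring that the output of Lemma \ref{lemma2}, together with the earlier syzygy lifts, really assembles into an element of the $B$-ideal $(b_1,\ldots,b_m)B$ rather than of some larger ideal of $T/K^{i+1}$. Careful bookkeeping of which ingredients lie in $B$ versus $T/K^{i+1}$ will be essential.
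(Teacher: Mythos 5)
Your reduction steps are fine as far as they go: surjectivity onto $T/\m^2$, and using closedness of finitely generated ideals of $A$ to reduce to an element $y'\in B\cap(K^i/K^{i+1})\cap I(T/K^{i+1})$. But the core step --- getting $y'\in I$ for an \emph{arbitrary} finitely generated ideal $I=(b_1,\ldots,b_m)B$ --- is exactly where the proposal has no working mechanism, and it is not a matter of bookkeeping. Lemma \ref{lemma2} applies only when the auxiliary elements $s_1,\ldots,s_n$ extend $f_1,\ldots,f_k$ to a regular sequence of $T$; your generators $b_j$ are arbitrary elements of $B$ (they may lie in $K/K^{i+1}$, be zerodivisors modulo $K$, etc.), and the regular sequence $f_1,\ldots,f_k,s_1,\ldots,s_n$ you introduce is unrelated to them, so the conclusion of Lemma \ref{lemma2} says nothing about the coefficients in your expression for $y'$. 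Unwinding your syzygy step one only gets $y'=\sum_l t_l\bigl(\sum_p e_{lp}\of_p\bigr)+\sum_j d_j b_j$ with $t_l\in T/K^{i+1}$ and $d_j\in K/K^{i+1}$, and nothing forces these coefficients into $B$; iterating merely pushes the problem deeper into powers of $K$. You also never derive from (3) the sharper statement $\Ker(B\to T/K^i)=(\of_1,\ldots,\of_k)^iB$ (condition $(3')$ in the paper's proof), which is what exhibits the reduced element as a $B$-combination of the degree-$i$ monomials in the $\of_j$ --- the indispensable starting point for any use of Lemma \ref{lemma2}.

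The paper does not attempt a direct argument for arbitrary finitely generated ideals. It proves closedness for three special classes only: ideals containing $(\of_1,\ldots,\of_k)^iB$ (via $(3')$ and closedness in $A$); ideals generated by parameters of $B$, where Lemma \ref{lemma2} applies legitimately because parameters lift to elements completing $f_1,\ldots,f_k$ to a $T$-regular sequence ($T$ is Cohen--Macaulay), with closedness of ideals of $R$ used to repair the coefficients so they land in $B$; and ideals primary to the maximal ideal. The general case is then settled by a global dichotomy (the remark following Lemma 21 of \cite{sing}): if some finitely generated ideal of $B$ were not closed, there would exist either a non-closed ideal whose closure is primary to the maximal ideal, or an infinitely generated prime of the form $P\cap B$; both possibilities are excluded using the bijections $\Spec B\leftrightarrow\Spec A$ and $\Spec (T/K^{i+1})\leftrightarrow\Spec(T/K^i)$ coming from the nilpotent kernels, together with the special cases already established. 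Your proposal contains no counterpart to this mechanism, so the obstacle you flag at the end is a genuine gap, not a technicality.
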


\begin{proof}
Let $\of_j$ denote $f_j+K^{i+1}$ for $1\le j\le k$.
We first want to show that condition (3) above implies the condition
\[
(3') \,\, \Ker (B \to T/K^i)=(\of_1\ldots, \of_k)^iB.
\]
The statement is trivial if $i=1$.
For $i>1$, let $x\in\Ker (B \to T/K^i)$.
Since $x\in\Ker (B \to T/K)=(\of_1,\ldots,\of_k)B$,
we can write $x=\sum b_j\of_j$ for $b_j\in B$.
We then have $\sum b_j\of_j\in K^2/K^{i+1}$ and  it follows that $b_j\in K/K^{i+1}$ for each $j$.
So $b_j\in\Ker (B\to T/K)=(\of_1,\ldots,\of_k)B$.
Let $\{\sigma_j^{(m)}\}$ be the set of images in $B$ of the distinct monomials
of degree $m$ in the $f_1,\ldots,f_k$.
Then we can write $x=\sum b_{2,j}\sigma_j^{(2)}$ where $b_{2,j}\in B$ for all $j$, finishing the proof of the condition if $i=2$.
For $i>2$,   $\sum b_{2,j}\sigma_j^{(2)}\in K^3/K^{i+1}$ and it follows that $b_{2,j}\in\Ker (B \to T/K)$ for each $j$.
This allows us to write
$x=\sum b_{3,j}\sigma_j^{(3)}$ where $b_{3,j}\in B$ for all $j$. Continuing in this way we arrive at
$x=\sum b_{i,j}\sigma_j^{(i)}$ where $b_{i,j}\in B$ for all $j$. That is $x\in (\of_1,\ldots, \of_k)^iB$, as claimed.

Thus for the rest of the proof we replace condition (3) by condition $(3')$, and continue to let
$\of_j$ denote $f_j+K^{i+1}$ for $1\le j\le k$.
Since $T/K^{i+1}$ modulo the square of its maximal ideal is naturally isomorphic to $T/\m^2$,
by Proposition \ref{prop1} we just need to show that the
map $B \to T/\m^2$ is onto and that finitely generated ideals of $B$
are closed in $T/K^{i+1}$.
Since $T/K^i$ is naturally isomorphic to the completion of $A$, Proposition \ref{prop1} yields that
the map $A\to T/(\m^2+K^i)=T/\m^2$ is onto.  It then follows from assumption
(1) that the map $B\to T/\m^2$ is also onto.

We will now establish that finitely generated ideals of
$B$ are closed in $T/K^{i+1}$.
We first show that all ideals of $B$ which contain $(\of_1,\dots, \of_k)^iB$ are closed.
Suppose $I$ is an ideal of $B$ containing $(\of_1,\dots, \of_k)^iB$ and $x\in I(T/K^{i+1})\cap B$.
Since ideals in $A$ are closed, we have $x+(K^i\cap B)\in I+(K^i\cap B)/(K^i\cap B)$.
As $B\to A$ is surjective, there exists $y\in I$ such that $y+(K^i\cap B)=x+(K^i\cap B)$.
Next, as $x=(x-y)+y$, we may reduce to the case $x\in K^i\cap B$.
Since $(\of_1,\dots, \of_k)^iB\subseteq I$, Condition (3') tells us that $x\in I$, and we are done.

Next we show that if $s_1,\ldots,s_n$ are parameters in $B$, then
$I=(s_1,\ldots,s_n)B$ is closed.
Suppose $x\in I(T/K^{i+1})\cap B$.
Let $J=(\of_1,\dots, \of_k)^iB$.
Since $J+I$ is closed, we have $x=y+z$ with $y\in J$ and $z\in I$.
It suffices to show that $y\in I$, and we may reduce to the case where $z=0$.
Write $x=\sum a_ms_m$ for $a_m\in T/K^{i+1}$, and $y=\sum b_j\sigma_j^{(i)}$ with $b_j\in T/K^{i+1}$. In $T/K^{i+1}$ we have the
equation $\sum a_ms_m-\sum b_j\sigma_j^{(i)}=0$.
Using Lemma \ref{lemma2}, we may assume $b_j\in (s_1,\dots,s_n)(T/K^{i+1})$ for every $j$, and therefore reduce to the case where $x\in JI(T/K^{i+1})$.
This allows us to write $x=\sum c_j\sigma_j^{(i)}$ with each $c_j\in I(T/K^{i+1})$.
As $J$ is closed, we also can write $x=\sum{d_j\sigma_j^{(i)}}$  with $d_j\in B$.
Hence $\sum{(d_j-c_j)\sigma_j^{(i)}}=0$.
It follows that $d_j-c_j\in K/K^{i+1}$ for each $j$.
For fixed $j$ we have $c_j+K\in R$ because $d_j\in B$, and it is also in the closure of $(s_1+K,\ldots,s_n+K)R$. However, ideals in $R$ are closed and so we have elements $e_1,\dots,e_n\in R$ with
$c_j+K=\sum e_m(s_m+K)$.
We next choose a preimage $e_m^\sharp\in B$ for each $e_m$.
Since $\sigma_j^{(i)}c_j=\sigma_j^{(i)}\sum e_m^\sharp (s_m+K^{i+1})$ for each $j$, we have $x\in I$.

Next we show that any ideal $I$ of $B$ which is primary to the maximal ideal of $B$ is closed.
Since $I$ is primary to the maximal ideal, it necessarily contains an ideal $J$ which is generated by a complete system of parameters. We have just seen that $J$ is closed.
Since $B\to T/\m^2$
is onto, so is the map $B\to T/(\m^j+K^{i+1})$ for all $j\ge 1$ (see, for
example, the proof of Proposition 1 of \cite{sing}).
Let $J^\sharp$ denote an ideal of $T$ generated by a set of preimages in $T$ of a generating set for $J$.  Now for some $j$
we have $\m^j\subseteq (K^{i+1}+J^\sharp)T$, so that the map $B\to T/(K^{i+1}+J^\sharp)T$ is also onto.
Thus $T/K^{i+1}=J(T/K^{i+1})+B$, and we have $I(T/K^{i+1})\cap B= (IJ(T/K^{i+1})+I)\cap B\subseteq J(T/K^{i+1})\cap B+I=J+I=I$ as desired.

According to the remark following Lemma 21 in \cite{sing}, if the finitely
generated ideals in $B$ are not all closed, then either there exists
an ideal $I$ of $B$ which is not closed and whose closure is primary to the maximal ideal, or there exists an infinitely generated prime ideal $P\cap B$ for $P\in \Spec T/K^{i+1}$.
To see that neither of these situations can occur, first note that since both of the maps $B\to A$ and
$T/K^{i+1}\to T/K^i$ have nilpotent kernels, there are natural bijections $\Spec B \leftrightarrow \Spec A$ and $\Spec T/K^{i+1} \leftrightarrow \Spec T/K^i$.  Suppose that $I$ is an ideal of $B$ which is not
primary to the maximal ideal of $B$.  Thus $I$ is contained in a non-maximal prime ideal of $B$, and
one of our bijections tells us the same is true for the image $\overline I$ of $I$ in $A$.
As $T/K^i$ is the completion of $A$, this means that $\overline I(T/K^i)$ is contained in a non-maximal prime ideal of $T/K^i$.
Then we invoke the other bijection to see that $I(T/K^{i+1})$ is not primary to the maximal ideal of $T/K^{i+1}$ and so the closure $I(T/K^{i+1})\cap B$ of $I$ is not primary to the maximal ideal of $B$.
Thus if $I$ is an ideal of $B$ whose closure is primary to the maximal ideal of $B$, then $I$ is primary to the maximal ideal of $B$, and hence is closed by the proof above.
 For the other case, $P\cap B$ is necessarily the closure of a finitely generated ideal $J$ of $B$, since all ideals of $T/K^{i+1}$ are finitely generated. As each $f_j$ becomes nilpotent in $T/K^{i+1}$, we see that $(f_1,\ldots,f_k)\subseteq P\cap B$.  Therefore $P \cap B$ is also the closure of the finitely generated ideal $J+(f_1,\ldots,f_k)$, which is closed by the proof above.
This forces $P\cap B$ to be finitely generated.
\end{proof}

\begin{lemma}\label{lemma4}
Let $(T,\m)$ be a Cohen-Macaulay complete local ring, $f_1,\ldots,f_k$ be a $T$-regular sequence
contained in $\m^2$, and $K=(f_1,\ldots , f_k)T$.
Further suppose that $T/K$ has dimension one.
Let $R_i$ and $R$ be local subrings of $T/K^i$ and $T/K$, respectively, with $i\geq 1$ such that
\begin{enumerate}
\item $R$ is an integral domain,
\item $T/K^i$ and $T/K$ are naturally isomorphic to the completions of $R_i$ and $R$ respectively,
\item $f_1+K^i,\ldots, f_k+K^i$ are in $R_i$, and
\item We have the following commutative diagram with surjective vertical maps

\[
\xymatrixrowsep{2pc}
\xymatrixcolsep{2pc}
\xymatrix{
 & T/K^{i+1} \ar@{->>}[d] \\
R_i \ar@{->>}[d] \ar[r]^\subseteq & T/K^{i} \ar@{->>}[d]  \\
R \ar[r]^\subseteq & T/K
}
\]
\end{enumerate}
Then there exists a local subring $R_{i+1}$ of $T/K^{i+1}$ such that $T/K^{i+1}$ is naturally isomorphic to the completion of $R_{i+1}$, $f_1+K^{i+1},\ldots,f_k+K^{i+1}$ are in $R_{i+1}$, and the commutative diagram above may be completed to one with surjective vertical maps
\[
\xymatrixrowsep{2pc}
\xymatrixcolsep{2pc}
\xymatrix{
 R_{i+1} \ar@{.>>}[d] \ar[r]^\subseteq & T/K^{i+1} \ar@{->>}[d]\\
R_i \ar@{->>}[d] \ar[r]^\subseteq & T/K^{i} \ar@{->>}[d]\\
R \ar[r]^\subseteq & T/K
}
\]
\end{lemma}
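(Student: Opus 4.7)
Plan. I plan to construct $R_{i+1}$ by applying the preceding lemma with $A := R_i$. That lemma requires a quasi-local subring $B \subseteq T/K^{i+1}$ with: (i) $B$ surjecting onto $R_i$ via the natural projection; (ii) $\of_j := f_j + K^{i+1} \in B$ for each $j$; and (iii) $\Ker(B \to T/K) = (\of_1, \ldots, \of_k) B$. Given such a $B$, the preceding lemma furnishes that $B$ is local Noetherian with completion $T/K^{i+1}$, and setting $R_{i+1} := B$ completes the required diagram.

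To build $B$, pick for each $r \in R_i$ a lift $\tilde r \in T/K^{i+1}$ with $\tilde r + K^i = r$, and let $B$ be the subring of $T/K^{i+1}$ generated by $\{\tilde r : r \in R_i\} \cup \{\of_1, \ldots, \of_k\}$. Conditions (i) and (ii) are immediate; one inclusion in (iii) is trivial. For the other, expand any $b \in B$ mapping to zero in $T/K$ as a polynomial in the generators; terms containing some $\of_j$ fall into $(\of_j)B$, so it remains to treat a polynomial $P(\tilde r_1, \ldots, \tilde r_m)$ in the lifts alone with $P(\bar r_1, \ldots, \bar r_m) = 0$ in $R$. A property inherited throughout the induction---trivial at the base $R_1 = R$ and reinstated at each application of the present lemma by condition (iii) of the preceding lemma---is that $\Ker(R_i \to R) = (f_1 + K^i, \ldots, f_k + K^i) R_i$; this lets me write $P(r_1, \ldots, r_m) = \sum_j (f_j + K^i) s_j$ in $R_i$. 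Lifting each $s_j$ to $\tilde s_j \in B$ and forming $P(\tilde r_1, \ldots, \tilde r_m) - \sum_j \of_j \tilde s_j \in B \cap K^i/K^{i+1}$, condition (iii) reduces to the inclusion $B \cap K^i/K^{i+1} \subseteq (\of_1, \ldots, \of_k) B$.

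This last inclusion is the principal obstacle, and it is where the one-dimensional integral domain hypothesis on $R$ enters decisively. By the regularity of $f_1, \ldots, f_k$, the module $K^i/K^{i+1}$ is free over $T/K$ on the degree-$i$ monomials $\sigma_l^{(i)}$ in the $f_j$'s, and $(\of_j)^i B = \sum_l \sigma_l^{(i)} R$ because the image of $B$ in $T/K$ is exactly $R$. The inclusion thus amounts to ensuring that the $T/K$-coefficients in the $\sigma_l^{(i)}$-expansion of any element of $B \cap K^i/K^{i+1}$ lie in $R$. I plan to arrange this by refining the lifts $\tilde r$ so that a parameter $t \in R$ is sent to a non-zero-divisor $\tilde t \in B$; any spurious coefficient $y_l \in T/K \smallsetminus R$ would then, after multiplication by powers of $\tilde t$ and reduction modulo $K$, lie in the closure of a finitely generated ideal of $R$ in $T/K$ without lying in the ideal itself, contradicting the closedness of finitely generated ideals of $R$ given by Proposition~\ref{prop1}. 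Arranging this lift refinement and carrying out the resulting case analysis constitute the bulk of the remaining work.
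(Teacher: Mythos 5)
Your opening reduction is fine: taking $B$ to be generated by the $\of_j$ together with one lift of each element of $R_i$, surjectivity onto $R_i$ and membership of the $\of_j$ are automatic, and the problem does reduce to showing that every element of $B\cap K^i/K^{i+1}$ has its coordinates, with respect to the free $T/K$-basis $\{\sigma_l^{(i)}\}$ of $K^i/K^{i+1}$, inside $R$. (Incidentally, the auxiliary property $\Ker(R_i\to R)=(f_1+K^i,\ldots,f_k+K^i)R_i$ that you propose to carry through the induction is already automatic from hypothesis (2), since finitely generated ideals of the Noetherian local ring $R_i$ are contracted from its completion $T/K^i$; no strengthening of the statement is needed.) The genuine gap is the step you defer to ``the bulk of the remaining work,'' which is in fact the entire content of the lemma. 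For an arbitrary simultaneous choice of lifts, $B$ contains $\widetilde{rs}-\widetilde r\,\widetilde s$, $\widetilde{r+s}-\widetilde r-\widetilde s$, and more generally $P(\widetilde r_1,\ldots,\widetilde r_m)$ for every relation $P$ holding among elements of $R_i$; the $\sigma_l^{(i)}$-coordinates of these elements are essentially arbitrary elements of $T/K$, so condition (iii) genuinely fails for a generic choice, and the issue is how to choose the lifts so that all of these relations are simultaneously good. Your proposed mechanism cannot do this: the closedness statement $I(T/K)\cap R=I$ only constrains elements that already lie in $R$, so it cannot be invoked to force a spurious coordinate $y_l\in T/K\smallsetminus R$ into $R$ (there is no identified finitely generated ideal of $R$ in whose closure $t^Ny_l$ is shown to lie while being an element of $R$), and a ``refinement'' of one lift that repairs one relation will in general disturb the relations involving that element elsewhere, so a one-shot global choice followed by local repairs has no evident termination.

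A telling symptom is that your sketch makes no essential use of the hypotheses that $R$ is a one-dimensional integral domain, which is exactly where the paper needs them. The paper does not choose all lifts at once: it runs a Zorn's lemma argument adjoining one element $r^\sharp\in R_i$ at a time, and chooses the new lift $\rt$ against a minimal polynomial $g$ of the image $r$ over the quotient field of the part of $R$ already covered (this uses that $R$ is a domain), splitting into the inseparable case (handled by $X^p-(\rt)^p$, again using the domain hypothesis) and the separable case, where $\oog'(r)\neq 0$ and dimension one make $R_i/(\og'(r^\sharp))R_i$ Artinian, hence complete, yielding $T/K^{i+1}=\pi^{-1}(R_i)+g'(\rt)(T/K^{i+1})$ and permitting the correction $\rt_2=\rt-\sum\sigma_j^{(i)}t_j$ that forces $g(\rt_2)\in(\of_1,\ldots,\of_k)\pi^{-1}(R_i)$. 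All other relations involving $\rt$ are then reduced to this one by division by $g$ together with a reverse induction on powers of $(\of_1,\ldots,\of_k)$, using that the leading-coefficient power $b$ maps to a nonzero element of the domain $R$ and is therefore a nonzerodivisor on $T/K^m$. Without ingredients of this kind (or a genuinely different device playing their role), the decisive inclusion $B\cap K^i/K^{i+1}\subseteq(\of_1,\ldots,\of_k)B$ is asserted rather than proved, so the proposal as it stands does not establish the lemma.
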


\begin{proof}
If we construct $R_{i+1}$ containing $f_1+K^{i+1},\ldots,f_k+K^{i+1}$ such that the upper square is commutative with surjective vertical maps and
$\Ker (R_{i+1}\to T/K) = (f_1+K^{i+1},\ldots,f_k+K^{i+1})R_{i+1}$, then the previous lemma completes the proof.
We let $\overline f_j$ denote $f_j+K^{i+1}$ for $1\le j\le k$.
\medskip

Consider the set of subrings $B$ of $T/K^{i+1}$ satisfying the following conditions:
\begin{enumerate}
\item \begin{enumerate}
  \item $\overline f_1,\ldots, \overline f_k\in B$
  \item In addition, if $\Char (T/K)=p>0$, $C^p\subseteq B$ where $C$ is the full preimage of $R_i$ in $T/K^{i+1}$.
\end{enumerate}
\item The image of $B$ under the map $T/K^{i+1}\to T/K^i$ is contained in $R_i$.
\item $\Ker (B\to T/K)  \subseteq (\overline f_1,\ldots, \overline f_k)\pi^{-1}(R_i)$, where $\pi:T/K^{i+1}\to T/K^i$ is the natural projection.
\end{enumerate}
This set can be ordered by inclusion.
We claim that this set contains a maximal element, a claim we will prove by Zorn's Lemma.
To see the claim, first we must show that the set is nonempty.
In the characteristic zero case, let $B_0=\mathbf{Z}[\overline f_1,\ldots, \overline f_k]$.
Obviously $B_0$ satisfies Conditions (1) and (2).
As $\Ker (B_0\to T/K) = (\overline f_1,\ldots, \overline f_k)B_0$, Condition (3) also holds.

In the characteristic $p$ case,
let $B_0=C^p[\overline f_1,\ldots, \overline f_k]$.
Conditions (1) and (2) are clear for $B_0$.
Suppose $\alpha\in\Ker (B_0\to T/K)$.
Then $\alpha=\sum c_j\sigma_j$ with each $c_j\in C^p$ and $\{\sigma_j\}$ ranging
over a set of distinct monomials in the $\overline f_j$.
To show Condition (3) for $B_0$, it suffices to show $ c_j\sigma_j\in (\overline f_1,\ldots, \overline f_k)\pi^{-1}(R_i)$ for each $j$.
The statement is obvious unless $\sigma_j=1$, so we may assume $\alpha\in C^p$.
Suppose $a+K^{i+1}\in C$ is such that $a^p+K^{i+1}\in K/K^{i+1}$.
Then the induced map $C\to R$ takes $a+K^{i+1}$ to a nilpotent element of the integral domain $R$.
So $a\in K$.
Hence $a+K^i=(a_1+K^i)(f_1+K^i)+\cdots+(a_k+K^i)(f_k+K^i)$ with each $a_i\in T$.
As $T/K^i$ is the completion of $R_i$, we may actually choose $a_1,\ldots, a_k$ so that each  $a_j+K^i$ is in $R_i$; so $a_j+K^{i+1}\in \pi^{-1}(R_i)$.
Let $e=\sum a_jf_j$ and note that $a+K^{i+1}=(e+K^{i+1})+(d+K^{i+1})$ with $d\in K^i$.
Finally, since $d^2\in K^{i+1}$ and $pd\in K^{i+1}$,
$(a+K^{i+1})^p=(e+K^{i+1})^p\in (\overline f_1,\ldots, \overline f_k)\pi^{-1}(R_i)$.

We have thus shown that in any characteristic, the set is nonempty.
Next we consider the union of an ascending chain of elements in the set.
The union obviously satisfies Condition (1) and the other two conditions can be viewed as elementwise conditions.
Since these hold for every set in the union, they must hold for the union.
Thus, by Zorn's Lemma, the set contains a maximal member $B$.

Next we claim the map $B\to R_i$ is surjective.
If not, let $A_i = \Image (B\to R_i)$ and let $A = \Image (B\to R)$.
We choose $r^{\sharp}\in R_i-A_i$ and let $r$ be the image of $r^{\sharp}$ in $R$.
If we lift $r^{\sharp}$ to a preimage $\widetilde{r}\in T/K^{i+1}$, we have natural surjections
$B[\widetilde r]\to A_i[r^\sharp] \to A[r]$ fitting into the diagram
\[
\xymatrixrowsep{2pc}
\xymatrixcolsep{2pc}
\xymatrix{
B \ar[r]^\subsetneq \ar@{->>}[d] & B[\widetilde r] \ar@{->>}[d] \ar[rr]^\subseteq & & T/K^{i+1} \ar@{->>}[d]\\
A_i \ar[r]^\subsetneq \ar@{->>}[d] & A_i[r^\sharp] \ar@{->>}[d] \ar[r]^\subseteq & R_i \ar@{->>}[d] \ar[r]^\subseteq & T/K^{i} \ar@{->>}[d]\\
A \ar[r]^\subseteq & A[r] \ar[r]^\subseteq & R \ar[r]^\subseteq & T/K
}
\]
where since $A_i[r^{\sharp}]$ properly contains $A_i$, $B[\widetilde{r}]$ properly contains $B$.
We will show that $\widetilde{r}$ can be chosen in such a way that $B[\widetilde{r}]$
satisfies the three conditions.
This will contradict the maximality of $B$ and so prove the claim.
As the first two conditions hold for an arbitrary choice of $\widetilde{r}$,
we need only consider Condition (3).

Note that $\Ker (B[\widetilde{r}]\to T/K) = \Ker (B[\widetilde{r}]\to A[r])$ since $R$ injects into $T/K$. We have a natural presentation
\[
A[X]/I \xrightarrow{\cong} A[r]
\]
For $h(X)\in B[X]$ we let $\overline h(X)$ denote the polynomial in $A_i[X]$ obtained
by reducing the coefficients of $h(X)$ modulo $K^i$, and by $\overline{\overline h}(X)$
the polynomial in $A[X]$ obtained by reducing the coefficients of $h(X)$ modulo $K$.

If $h(X)\in B[X]$, then $h(\widetilde{r})\in \Ker (B[\widetilde{r}]\to T/K)$ precisely if $\ooh(X)\in I$.
So the proof reduces to showing that $h(\widetilde{r})\in (\overline f_1,\ldots, \overline f_k)\pi^{-1}(R_i)$ whenever $\ooh(X)\in I$.
First we consider the case $\ooh(X)=0$.
By Condition (3) for $B$, all of the coefficients of $h(X)$ are in $(\overline f_1,\ldots, \overline f_k)\pi^{-1}(R_i)$
and so $h(\widetilde{r})\in (\overline f_1,\ldots, \overline f_k)\pi^{-1}(R_i)$ regardless of which lifting $\rt$ we choose.
In particular, we note that Condition (3) holds for $B[\widetilde{r}]$ if $I=(0)$.

Assume $I\neq (0)$.
Suppose $g(X)\in B[X]$ is such that $0\neq \oog(X)\in I$ and
$g(X)$ is of minimal degree among all such polynomials.
As $B\to A$ is surjective, every element of $I$ has the form $\ooh(X)$
and so $\oog(X)$ also has minimal degree among the set of nonzero polynomials in $I$.
Since $R$ is an integral domain, $\oog(X)$ is a (not necessarily monic) minimal polynomial satisfied by $r$ over the quotient field of $A$.

\medskip

\noindent\emph{Claim.}
We may choose $g(X)$ and $\rt$ so that $g(\rt)\in (\of_1,\ldots,\of_k)\pi^{-1}(R_i)$.

\medskip

Either $\oog^{\prime}(r)=0$ or $\oog^{\prime}(r)\neq 0$.
The first case will occur precisely when $A\subseteq A[r]$ is not a separable extension, something which can happen only if $T/K$ has characteristic $p$ and $g(X)$ has degree at least $p$.
In this case $(\rt)^p\in B$, so we may choose $g(X)=X^p-(\rt)^p$ and we see that $g(\rt)=0\in (\of_1,\ldots,\of_k)\pi^{-1}(R_i)$.
The claim actually holds for any choice of $\rt$.

In the second case, we choose our minimal polynomial $g(X)$ arbitrarily but we must select $\rt$ carefully.
Since $g(\rt)\in K/K^{i+1}$, $\og(r^{\sharp})\in (\pi(\of_1),\ldots,\pi(\of_k))(T/K^i)$.
Further, as $T/K^i$ is the completion of $R_i$, $\og(r^{\sharp})\in (\pi(\of_1),\ldots,\pi(\of_k))R_i$.
 So we may choose elements $\gamma_j\in\pi^{-1}(R_i)$ such that $\og(r^{\sharp})=\sum {\pi(\of_j)\pi(\gamma_j)}$.
 Let $\{\sigma_j^{(m)}\}$ be the images in $B$ of the distinct monomials of degree $m$ in the
 $f_1,\ldots,f_k$.
 Then we have elements $\alpha_j\in T/K^{i+1}$ such that
 $g(\rt)=\sum {\of_j\gamma_j}+\sum{\sigma_j^{(i)}\alpha_j}$.
 As $\oog^{\prime}(r)\neq 0$, $R/(\oog^{\prime}(r))R$ is zero-dimensional and so $R_i/(\og^{\prime}(r^{\sharp}))R_i$ is also zero-dimensional.
Since zero-dimensional local rings are complete,
the induced map
\[
R_i/(\og^{\prime}(r^{\sharp}))R_i\to (T/K^i)/(\og^{\prime}(r^{\sharp})(T/K^i))
\]
is an isomorphism.
It follows that $T/K^i=R_i+\og^{\prime}(r^{\sharp}) (T/K^i)$ and by taking preimages we see
$T/K^{i+1}=\pi^{-1}(R_i)+g^{\prime}(\rt) (T/K^{i+1})$.
There exists for each $j$, $\beta_j\in \pi^{-1}(R_i),t_j\in T/K^{i+1}$ such that
$\alpha_j=\beta_j+g^{\prime}(\rt) t_j$.
We now claim that the desired condition holds if we choose the lifting $\rt_2=\rt-\sum {\sigma_j^{(i)}t_j}$.
Taking the Taylor series expansion of $g(X)$ about
$\rt$ we have
\begin{align*}
g(\rt_2)&=g(\rt)-g'(\rt)\sum {\sigma_j^{(i)}t_j}\\
&=\sum {\of_j\gamma_j}+\sum\sigma_j^{(i)}(\alpha_j-t_jg'(\rt))\\
&=\sum {\of_j\gamma_j}+\sum\sigma_j^{(i)}\beta_j\in (\of_1,\ldots,\of_k)\pi^{-1}(R_i).
\end{align*}
This completes the proof of the claim.

To derive our contradiction, it only remains to show that, for the choice of $\rt$ given by this claim,
if $h(X)\in B[X]$ with $\ooh(X)\in I$, then $h(\rt)\in (\of_1,\ldots,\of_k)\pi^{-1}(R_i)$.
Choosing $b\in B-K/K^{i+1}$ to be a sufficiently high power of the leading coefficient of $g(X)$,
we can write $bh(X)=h_1(X)g(X)+h_2(X)$ where $h_1(X),h_2(X)\in B[X]$ and $h_2(X)$ is a polynomial of lower degree than $g(X)$.
Since $\ooh(X)\in I$ and $\oog(X)\in I$, we have $\ooh_2(X)\in I$.
By degree considerations $\ooh_2(X)=0$ and, as we noted above, this yields
$h_2(\rt)\in (\overline f_1,\ldots, \overline f_k)\pi^{-1}(R_i)$.
Further, as $h_1(\rt)\in \pi^{-1}(R_i)$ and $g(\rt)\in (\of_1,\ldots,\of_k)\pi^{-1}(R_i)$,
we actually get $bh(\rt)\in (\of_1,\ldots,\of_k)\pi^{-1}(R_i)$.
Additionally, we note that since $b$ maps to a nonzero element of $R$ and every nonzero element of $R$ is regular on the completion of $R$, i.e., $T/K$, $b=(e+K^{i+1})$ where $f_1,\ldots,f_k,e$ is a regular sequence in $T$.
It follows that $b$ is a regular element on $T/K^m$ for every $m\leq i+1$.

To get $h(\rt)\in (\of_1,\ldots,\of_k)\pi^{-1}(R_i)$, we will actually prove a more general statement which allows the coefficients of $h(X)$ to be arbitrary elements of $T/K^{i+1}$.
Using reverse induction on $m$, for $m=1,\ldots,i$, we will show that
if $h(X)\in (T/K^{i+1})[X]$, $b\in B-K/K^{i+1}$, and $bh(\rt)\in (\of_1,\ldots,\of_k)^m\pi^{-1}(R_i)$, then $h(\rt)\in (\of_1,\ldots,\of_k)^m\pi^{-1}(R_i)$.
First consider $m=i$.
Here we have $bh(\rt)=\sum \sigma_{j}^{(i)}d_j$ with each $d_j\in \pi^{-1}(R_i)$.
Also, as $b$ is regular on $T/K^i$, we have $h(\rt)=\sum \sigma_{j}^{(i)}c_j$ with each $c_j\in T/K^{i+1}$.
It follows that $\sum \sigma_{j}^{(i)}(bc_j-d_j)=0$ and so $bc_j-d_j\in K/K^{i+1}$.
Thus $\pi(d_j)\in (\pi(b),\pi(\of_1),\dots,\pi(\of_k))(T/K^i)$.
As $T/K^i$ is the completion of $R_i$ and so ideals are closed, we get
$\pi(d_j)\in (\pi(b),\pi(\of_1),\dots,\pi(\of_k))R_i$.
Thus there exists $a_j\in \pi^{-1}(R_i)$ such that $\pi(d_j-ba_j)\in K/K^i$.
Of course, we then have $d_j-ba_j\in K/K^{i+1}$ and so $\sigma_{j}^{(i)}(d_j-ba_j)=0$.
So $bh(\rt)=\sum \sigma_{j}^{(i)}ba_j$, giving $h(\rt)=\sum \sigma_{j}^{(i)}a_j\in (\of_1,\ldots,\of_k)^i\pi^{-1}(R_i)$.
Next assume that $m\geq 1$ and that we have already demonstrated the $m+1$ case.
As before, we have $bh(\rt)=\sum \sigma_{j}^{(m)}d_j$ with each $d_j\in \pi^{-1}(R_i)$ and
 $h(\rt)=\sum \sigma_{j}^{(m)}c_j$ with each $c_j\in T/K^{i+1}$.
 Again this gives $\sum \sigma_{j}^{(m)}(bc_j-d_j)=0$ and so $bc_j-d_j\in K/K^{i+1}$.
As before, this gives us an element $a_j\in \pi^{-1}(R_i)$ such that
$\pi(d_j-ba_j)\in (\pi(\of_1),\ldots,\pi(\of_k))R_i$.
Thus $d_j-ba_j\in (\of_1,\dots,\of_k)\pi^{-1}(R_i)+K^i/K^{i+1}$.
Then $\sigma_{j}^{(m)}(d_j-ba_j)\in (\of_1,\dots,\of_k)^{m+1}\pi^{-1}(R_i)$.
Finally we define $k(X)=h(X)-\sum \sigma_{j}^{(m)}a_j$.
Then $bk(\rt)=\sum \sigma_{j}^{(m)}d_j-b\sum \sigma_{j}^{(m)}a_j=\sum \sigma_{j}^{(m)}(d_j-ba_j)\in
(\of_1,\dots,\of_k)^{m+1}\pi^{-1}(R_i)$.
By the induction assumption, $k(\rt)\in (\of_1,\ldots,\of_k)^{m+1}\pi^{-1}(R_i)$
and so $h(\rt)\in (\of_1,\ldots,\of_k)^m\pi^{-1}(R_i)$ as desired.
The $m=1$ case is actually the result we need, the final piece in the demonstration of our contradiction.

We have shown that we can choose $B$ satisfying the three conditions such that $B\to R_i$ is surjective. Let $R_{i+1}=B$.
Condition (3) gives $\Ker (R_{i+1}\to T/K)  \subseteq (\overline f_1,\ldots, \overline f_k)\pi^{-1}(R_i)$.
As $R_{i+1}\to R_i$ is surjective, we now have
$\Ker (R_{i+1}\to T/K)  \subseteq (\overline f_1,\ldots, \overline f_k)(R_{i+1}+\Ker (T/K^{i+1}\to T/K^i))=
(\overline f_1,\ldots, \overline f_k)R_{i+1}$.
\end{proof}

\begin{thm}\label{thm1}
Let $(T,\m)$ be a Cohen-Macaulay complete local ring, $f_1,\ldots,f_k$ be a $T$-regular sequence
contained in $\m^2$, and $K=(f_1,\ldots , f_k)T$. Further suppose that $T/K$ has dimension one.
Let $R$ be an integral domain which is a local subring of  $T/K$ such that $T/K$ is naturally isomorphic
to the completion of $R$.
Then there exists a local subring $S$ of $T$ with $ f_1,\ldots,  f_k\in S$ such that $T$ is naturally isomorphic to the completion of $S$ and we have the commutative diagram with surjective vertical maps
\[
\xymatrixrowsep{2pc}
\xymatrixcolsep{4pc}
\xymatrix{
S \ar@{.>}[r]^\subseteq\ar@{.>>}[d]_{\pi\vert_S} & T \ar@{->>}[d]^{\pi}\\
R \ar[r]^\subseteq & T/K \\
}
\]
\end{thm}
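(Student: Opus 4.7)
My plan is to iterate Lemma~\ref{lemma4} to construct a compatible tower of local subrings and then pass to a limit. Starting with $R_1 := R$, I inductively apply Lemma~\ref{lemma4} to produce a local subring $R_{i+1} \subseteq T/K^{i+1}$ for each $i \geq 1$; because the conclusion of that lemma feeds back into its hypotheses, the iteration carries through without issue. This yields an inverse system
\[
\cdots \twoheadrightarrow R_{i+1} \twoheadrightarrow R_i \twoheadrightarrow \cdots \twoheadrightarrow R_1 = R
\]
of local rings, with each $R_i$ having completion naturally isomorphic to $T/K^i$ and containing the images of $f_1,\ldots,f_k$.

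Next, since $T$ is $\m$-adically complete and $K \subseteq \m^2$ forces $K^i \subseteq \m^{2i}$, every $K$-adic Cauchy sequence in $T$ is $\m$-adic Cauchy; combined with $\bigcap_i K^i = 0$, this gives the natural identification $T = \varprojlim T/K^i$. I therefore define
\[
S := \{\, t \in T : t + K^i \in R_i \text{ for all } i \geq 1 \,\} \;=\; \varprojlim R_i \;\subseteq\; T.
\]
A diagonal argument using the surjections $R_{i+1} \twoheadrightarrow R_i$ shows that each map $S \to R_i$ is surjective; in particular $S \twoheadrightarrow R_1 = R$ completes the required commutative square, and $f_1,\ldots,f_k \in S$ by construction.

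To finish, I apply Proposition~\ref{prop1} to $S \subseteq T$. This requires (a) that $S \to T/\m^2$ is surjective, and (b) that every finitely generated ideal of $S$ is closed in $T$. Part (a) is routine: Proposition~\ref{prop1} applied to each $R_i \subseteq T/K^i$ shows $R_i \twoheadrightarrow T/(\m^2 + K^i) = T/\m^2$, and this composes with the surjection $S \twoheadrightarrow R_i$.

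Part (b) is where I expect the real work. Given $I = (s_1,\ldots,s_n)S$ and $t \in IT \cap S$, Proposition~\ref{prop1} applied to $R_i$ shows that the image of $t$ in $R_i$ lies in the ideal generated there by the images of $s_1,\ldots,s_n$, yielding approximate expressions $t \equiv \sum_j c_{i,j} s_j \pmod{K^i T}$ with $c_{i,j} \in S$. The delicate step is to upgrade these approximations into an honest equation $t = \sum_j c_j s_j$ in $S$, which calls for a Mittag--Leffler-style compatibility argument on the inverse system of solution sets in $R_i^n$. I anticipate that the hypothesis that $R$ is a one-dimensional integral domain is exactly what provides enough control over the syzygies among $s_1,\ldots,s_n$ to make this compatible lifting succeed, in a spirit similar to the closedness arguments already carried out in the proof of Lemma~\ref{lemma4}.
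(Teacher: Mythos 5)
Your skeleton coincides with the paper's proof: iterate Lemma~\ref{lemma4} to get a tower of surjections $\cdots\twoheadrightarrow R_{i+1}\twoheadrightarrow R_i\twoheadrightarrow\cdots\twoheadrightarrow R_1=R$, set $S=\varprojlim R_i\subseteq T=\varprojlim T/K^i$, and verify the two conditions of Proposition~\ref{prop1}. But at the one point you yourself flag as ``the real work'' --- closedness of a finitely generated ideal $I=(s_1,\ldots,s_n)S$ --- you do not give an argument, only the phrase ``Mittag--Leffler-style compatibility argument,'' and the mechanism you anticipate is not the right one. The difficulty is genuine: from $t\in IT\cap S$ you get, for each $i$, coefficients $c_{i,j}$ with $t\equiv\sum_j c_{i,j}s_j \pmod{K^i}$ and $c_{i,j}+K^i\in R_i$, but these solutions are in no way compatible as $i$ grows, and the inverse limit of the solution sets can a priori be empty; nothing in the surjectivity of $R_{i+1}\to R_i$ lets you modify a stage-$(i+1)$ solution to agree with a prescribed stage-$i$ solution, because the difference of two solutions is only known to be a syzygy of $s_1,\ldots,s_n$ modulo $K^i$, with no bound on its size.

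The paper closes exactly this gap with the Artin--Rees lemma in $T$: choose $N$ with $IT\cap K^{N+i}=K^i(K^N\cap IT)$ for all $i\ge 0$. Then, comparing the stage-$(N+i+1)$ solution with the previously chosen one, the discrepancy $\sum_j(u_{j,i}-t_{j,i})s_j$ lies in $K^{N+i}\cap(IT+K^{N+i+1})\subseteq K^iIT+K^{N+i+1}$, and closedness of finitely generated ideals in $R_{N+i+1}$ (available because $T/K^{N+i+1}$ is the completion of $R_{N+i+1}$) lets one rewrite it with correction coefficients $v_{j,i}$ lying in $K^i(T/K^{N+i+1})\cap R_{N+i+1}$. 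Thus the corrected coefficients $t_{j,i+1}\equiv t_{j,i}+v_{j,i}$ form a coherent (Cauchy) sequence and define elements of $S$ exhibiting $t\in I$. Note also that your guess about where the hypotheses enter is off: the assumptions that $R$ is a one-dimensional domain are consumed in Lemma~\ref{lemma4} to build the tower $R_i$ in the first place; the limit argument itself uses only Artin--Rees for $IT\subseteq T$ together with closedness in each $R_i$, not any control of syzygies coming from $\dim R=1$ or $R$ being a domain. Finally, you should also record the routine verifications the limit step needs before Proposition~\ref{prop1} applies, namely that $S\to T$ is injective and that $S$ is quasi-local with maximal ideal $\m\cap S$ (inverting coherently stage by stage), as the paper does.
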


\begin{proof} We apply the previous lemma a countable number of times to obtain a sequence of surjections
\[
\cdots \to R_{i+1} \xrightarrow{\rho_{i+1}} R_i \xrightarrow{\rho_i} R_{i-1} \to \cdots \to R_1 \xrightarrow{=} R
\]
where $\pi_i:T/K^{i+1}\to T/K^i$ is the natural map and $\rho_i=\pi_i|_{R_i}$. This sequence of maps
forms an inverse system and the inclusion maps $R_i \to T/K^i$ define a morphism of inverse systems. Let $S=\lim_{\leftarrow}R_i$ be the inverse limit of the $R_i$. Elements of $s\in S$ have the form
$s=(s_i+K^i)\in \prod R_i$ with $s_i\in T$ and $s_{i+1} +K^i= s_i +K^i$ for all $i\ge 1$.
Since $T$ is complete in the $K$-adic topology, the natural map $T \to \lim_\leftarrow T/K^i$ taking $t$ to $(t+K^i)$
is an isomorphism. We will therefore identify the element $x$ of $T$ with $(x+K^i)\in \lim_\leftarrow T/K^i$.
It follows that we get a commutative diagram with surjective vertical maps
\[
\xymatrixrowsep{2pc}
\xymatrixcolsep{4pc}
\xymatrix{
S \ar@{.>}[r]^\psi\ar@{->>}[d]_{\pi\vert_S} & T \ar@{->>}[d]^{\pi}\\
R \ar[r]^\subseteq & T/K \\
}
\]

First we show that the induced map on direct limits $\psi$ is injective.  Suppose that $\psi(s)=0$ in $T$
for $s=(s_i+K^i)\in S$.  This means that $s_i+K^i=K^i$ for all $i$, and so $(s_i+K^i)=(K^i)$ which is the zero element in $S$.

Since $f_j + K^i \in R_i$ for all $i$, and $1\le j\le k$. We have $f_j=(f_j+K^i)\in S$ for $1\le j\le k$. It follows that we also get all monomials comprised of the $f_j$ in $S$.

To show that $S$ is quasi-local with maximal ideal $\m\cap S$, we show that every element of $S-\m$ has an inverse in $S$.  Let $x=(x_i+K^i)\in S-\m$.  Since each $R_i$ is quasi-local with maximal ideal $\m/K^i\cap R_i$, and $x_i+K^i\in R_i-(\m/K^i\cap R_i)$, we have inverses $y_i+K^i\in R_i-(\m/K^i\cap R_i)$ of $x_i+K^i$ in $R_i$ for each $i$.  We just need to show that
$y_i+K^{i-1}=y_{i-1}+K^{i-1}$ for all $i\ge 1$, for then the element $(y_i+K^i)$ will be the inverse of $x$ in $S-\m$.
We have $x_iy_i-1\in K^i$, and it follows that $x_{i-1}y_i-1\in K^{i-1}$.  By uniqueness of inverses
in $R_{i-1}$ we see that $y_i+K^{i-1}=y_{i-1}+K^{i-1}$.

If we show that $S\to T/\m^2$ is onto and $IT\cap S = I$ for all finitely generated ideals $I$ of $S$,
then we may apply Proposition 1 to complete the proof.  As $R\to T/\m^2$ is onto, certainly
$S\to T/\m^2$ is onto.

Now let $y_1,\dots,y_n$ be elements of $S$, with $y_j=(y_{j,i}+K^i)$, and $I$ the ideal of $S$ they
generate.  Choose $x=(x_i+K^i)\in IT\cap S$. Then $x_i+K^i\in I (T/K^i)\cap R_i$ for all $i\ge 0$.

\medskip

\noindent \emph{Claim.}  There exists a positive integer $N$ and $\{t_{j,i}| i\ge 0, 1\le j\le n\}\subseteq T$ such that
\begin{enumerate}
\item $t_{j,i}+K^{N+i+1}\in R_{N+i+1}$,
\item $t_{j,i+1}+K^i=t_{j,i}+K^i$, and
\item $x_{N+i}+K^{N+i}=\sum t_{j,i}y_{j,N+i}+K^{N+i}$ in $R_{N+i}$
\end{enumerate}
\medskip
for each $j$ and for all $i\ge 0$. Assuming the claim, conditions (1) and (2) imply that $(t_{j,i}+K^i)\in S$.
Condition (3)
implies $x_i+K^i=\sum t_{j,i}y_{j,i} + K^i$ for $i\ge 0$, which means that
$(x_i+K^i)=\sum (t_{j,i}+ K^i)(y_{j,i}+ K^i)\in I$, and thus $I$ is closed.

\medskip

\noindent\emph{Proof of the claim.} By Artin-Rees there exist an integer $N$ such that
$IT\cap K^{N+i}=K^i(K^N\cap IT)$ for all $i\ge 0$.
We prove the existence of the $t_{j,i}$ by induction on $i$, beginning with $i=0$.

Since finitely generated ideals in $R_{N+1}$ are closed we have  $I(T/K^{N+1})\cap R_{N+1}=IR_{N+1}$.
Therefore there exists $\{t_{j,0}\}\subseteq T$ such that
\[
x_{N+1} + K^{N+1}= \sum t_{j,0}y_{j,N+1} + K^{N+1}
\]
with $t_{j,0}+K^{N+1}\in R_{N+1}$.  Of course then,
\[
x_N + K^N= \sum t_{j,0}y_{j,N} + K^N.
\]

Now suppose we have successfully chosen $\{t_{j,i}| 1\le j\le n\}$ and we want to find
$\{t_{j,i+1}| 1\le j\le n\}$.  Since finitely generated ideals of $R_{N+i+1}$ are closed, there exists
$\{u_{j,i}\}\subseteq T$ such that
\[
x_{N+i+1}+ K^{N+i+1}=\sum u_{j,i}y_{j,N+i+1}+K^{N+i+1}
\]
and $u_{j,i}+K^{N+i+1}\in R_{N+i+1}$.
This equation together with
\[
x_{N+i+1}+K^{N+i}=\sum t_{j,i}y_{j,N+i+1}+K^{N+i}
\]
yields
\begin{align*}
\sum (u_{j,i}-t_{j,i})y_{j,N+i+1}\in K^{N+i}\cap (IT+K^{N+i+1})&=(K^{N+i}\cap IT)+K^{N+i+1}\\
&=K^i(K^N\cap IT)+K^{N+i+1}\\
&\subseteq K^iIT + K^{N+i+1}
\end{align*}
Therefore we have $\sum (u_{j,i}-t_{j,i})y_{j,N+i+1}+K^{N+i+1}\in K^iI(T/K^{N+i+1})\cap R_{N+i+1}$.
Since finitely generated ideals of $R_{N+i+1}$ are closed, there exist $\{v_{j,i}\}\subseteq T$ such that
$v_{j,i}+K^{N+i+1}\in K^i(T/K^{N+i+1})\cap R_{N+i+1}$ and
\[
\sum(u_{j,i}-t_{j,i})y_{j,N+i+1}+K^{N+i+1}=\sum v_{j,i}y_{j,N+i+1}+K^{N+i+1}
\]
Finally, since $R_{N+i+2}\to R_{N+i+1}$ is onto, we choose
$t_{j,i+1}+K^{N+i+2}\in R_{N+i+2}$ such that $t_{j,i+1}+K^{N+i+1}=t_{j,i}+v_{j,i}+K^{N+i+1}$.  It is routine to check that the elements $\{t_{j,i+1}\}$ do the job and the induction is complete.
\end{proof}

Assume that $\R$ is isomorphic to $T/K$.  Upon identifying $R$ with its image in $T/K$,
Theorem \ref{thm1} proves the following.

\begin{cor} Suppose that $R$ is a local Noetherian integral domain of
dimension 1.  Then $\R$ is isomorphic to the quotient of a regular local ring by an ideal generated
by a regular sequence if and only if $R$ is the quotient of a regular local ring by an ideal generated
by a regular sequence.
\end{cor}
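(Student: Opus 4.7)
The plan is to dispatch the easy direction quickly and reduce the harder one to a direct application of Theorem~\ref{thm1}. If $R \cong S/(g_1,\ldots,g_k)S$ with $S$ a regular local ring and $g_1,\ldots,g_k$ a regular sequence, then completing gives $\R \cong \widehat S/(g_1,\ldots,g_k)\widehat S$; the completion $\widehat S$ is regular and $g_1,\ldots,g_k$ remains a regular sequence after the faithfully flat base change $S \to \widehat S$, so $\R$ is an absolute complete intersection in the required sense. No real work is needed here.

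For the reverse direction, my first step is to put $\R$ into the shape demanded by Theorem~\ref{thm1}. Using Cohen's structure theorem, I choose a complete regular local ring $T$ of Krull dimension equal to the embedding dimension of $R$ together with a surjection $\pi\colon T \twoheadrightarrow \R$; since $\R$ is a formal complete intersection and $T$ has minimal possible dimension, a minimal generating set $f_1,\ldots,f_k$ of $K = \ker\pi$ is automatically a $T$-regular sequence contained in $\m^2$. Via the canonical injection $R \hookrightarrow \R = T/K$ I identify $R$ with a local subring of $T/K$. Now all the hypotheses of Theorem~\ref{thm1} are in place: $T$ is Cohen--Macaulay (being regular), $f_1,\ldots,f_k$ is a $T$-regular sequence in $\m^2$, $\dim T/K = \dim R = 1$, and the completion of the integral domain $R$ is naturally $T/K$.

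Theorem~\ref{thm1} then produces a local subring $S \subseteq T$ containing $f_1,\ldots,f_k$ whose completion is $T$, together with a surjection $\pi|_S\colon S \twoheadrightarrow R$. To conclude, I verify that this gives the desired absolute presentation. Since $\widehat S \cong T$ is regular, and both embedding dimension and Krull dimension are preserved under completion, $S$ itself is regular local; faithful flatness of $S \to \widehat S$ further guarantees that $f_1,\ldots,f_k$ is an $S$-regular sequence. Writing $I = \ker(\pi|_S)$, I have $(f_1,\ldots,f_k)S \subseteq I$, and completing the exact sequence $0 \to I \to S \to R \to 0$ identifies $IT$ with $\ker(T \to T/K) = K = (f_1,\ldots,f_k)T$. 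Faithful flatness of $T$ over $S$ then upgrades the containment $(f_1,\ldots,f_k)S \subseteq I$ to equality, exhibiting $R$ as the quotient of the regular local ring $S$ by the regular sequence $f_1,\ldots,f_k$. The genuine difficulty has already been absorbed by Theorem~\ref{thm1}, so nothing substantive remains in this corollary.
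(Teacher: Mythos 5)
Your proof is correct and follows essentially the same route as the paper: reduce to a Cohen presentation $T\twoheadrightarrow\widehat R$ with kernel generated by a regular sequence in $\m^2$ (the presentation-independence fact the paper cites from EGA), invoke Theorem~\ref{thm1} to produce $S$, and descend to $R\cong S/(f_1,\ldots,f_k)S$ by faithful flatness of $S\to\widehat S\cong T$. You simply spell out the steps the paper treats as immediate (the setup and the ``obvious'' direction of the theorem in the introduction), so there is no substantive difference.
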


% Section 2 begins here
\section{A 3-dimensional complete intersection that is not a complete intersection}

This section is devoted solely to the construction of an example.
The ring $R$ will be a three dimensional local domain whose completion is $T=\mathbf{R}[[x,y,z,w]]/(x^2+y^2)$.
The example is far from excellent; in fact, while we do not know whether or not there are excellent examples, the non-excellence is a critical aspect of this example.
The generic point of the singular locus of $T$, the prime ideal $(x,y)T$, will intersect trivially with $R$, but $R$ will have uncountably many height two prime ideals $P_{\lambda}$ which are contractions of singular points and in fact singular themselves.
Showing that $R$ cannot be lifted to a regular local ring will largely consist of showing that there is not a simultaneous
compatible lifting of the $R/P_{\lambda}$'s.
This particular trick certainly requires both non-excellence and dimension at least three.
On the other hand, there is no reason to believe that the choice of coefficient field was of any importance.
The construction is easier if one knows the cardinality of the coefficient field and choosing $\mathbf{R}$
allowed us to use the very elementary polynomial $x^2+y^2$.

While the construction is rather intricate, the conception behind it is not.
We build an example with the property that if $R$ can be lifted to a regular local ring $S$
contained in $\mathbf{R}[[x,y,z,w]]$, then $S$ must contain an element
$\Theta=f(1+z\omega+z^2h+xa+yb)$ where $f=x^2+y^2$ and $\omega\in\mathbf{R}$ is known, but we have no information about $h,a,b$.
We also equip $R$ with a large collection of prime ideals whose extension to $T$ contains $(x,y)T$ and the construction of each of these prime ideals guarantees the existence of an element in the lifting which is congruent to $f$ modulo that particular prime ideal $P$.
This new element and $\Theta$ are unit multiples of each other and so their quotient will
be in $S$.
So $1+z\omega+z^2h\in R/P$ and it follows that $z(\omega+zh)\in R/P$.
We will construct $R$ so that $z\in R$ and so $\omega+zh$ is in the quotient field of $R/P$.
We can't control $h$ but we do know that, for some fixed value of $h$, $\omega+zh$ must be in the quotient field of $R/P$ for every one of the primes we construct.
So, for every possible value of $h$, we construct a prime ideal $P_h$ such that $\omega+zh$
is transcendental over $R/P_h$ and so certainly not in the quotient field.
This contradiction rules out the possibility of a lifting.

We begin with a lemma.
It is a minor variant of Lemma 3 from \cite {UFD} and the proof is substantively the same.

\begin{lemma}\label{avoidance}
Let $(T,\m)$ be a local ring which contains an uncountable field $F$,
 let $C\subset \Spec T$, and let $D\subset T$.
Let $I$ be an ideal of $T$ with $I\nsubseteq P$ for all $P\in C$.
If $|C\times D|< |F|$, then $I\nsubseteq \bigcup \{r+P|  r\in D, P\in C\}$.

Moreover, if $t\in I-\bigcup_{P\in C}P$ (and such a $t$ must exist),
then there exists $u\in F$ such that $tu\notin \bigcup \{r+P| r\in D, P\in C \}$.
Further, if $D^{\prime}\subset T$  is such that
$|D^{\prime}|< |F|$, then we may additionally choose $u\notin \bigcup \{r+\m|  r\in D^{\prime}\}$.

\end{lemma}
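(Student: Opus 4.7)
The plan is to run the usual cardinality-style avoidance argument, making essential use of the embedding $F\subseteq T$. The crucial basic observations are that $F\cap P=0$ for every $P\in C$ (since $F\cap P$ is an ideal of the field $F$ and $1\notin P$ prevents it from being all of $F$), and similarly $F\cap\m=0$. Consequently $I$, and each $I\cap P$ for $P\in C$, is an $F$-vector subspace of $T$, and $I\cap P\subsetneq I$ precisely because $I\nsubseteq P$.

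First I would establish the existence of $t\in I\setminus\bigcup_{P\in C}P$. For this I invoke the standard fact that a vector space over an infinite field $F$ cannot be written as a union of strictly fewer than $|F|$ proper subspaces. (A proof restricts attention to an affine line $\{v_1+cv_2:c\in F\}$ through two carefully chosen vectors and uses the fact that any proper subspace meeting the line in more than one point must contain both $v_1$ and $v_2$.) Since $|C|\le|C\times D|<|F|$, applying the fact to the family $\{I\cap P\}_{P\in C}$ shows $I\ne\bigcup_{P\in C}(I\cap P)$, yielding $t$.

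Given such a $t$, I would next bound the set of excluded $u\in F$. For each pair $(r,P)\in D\times C$, the set $\{u\in F:tu-r\in P\}$ has at most one element: if $tu_1-r$ and $tu_2-r$ both lie in $P$, then $t(u_1-u_2)\in P$, and since $t\notin P$ and $P$ is prime, $u_1-u_2\in F\cap P=0$. So each pair $(r,P)$ rules out at most one $u$. The refinement involving $D'$ is completely analogous: for each $r\in D'$, the set $\{u\in F:u-r\in\m\}$ has at most one element because $F\cap\m=0$. Combining, the total collection of forbidden $u$ has cardinality at most $|D\times C|+|D'|<|F|$, so an admissible $u\in F$ exists, and then $tu$ is the element we need.

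The only substantive input is the subspace-union fact in the second paragraph; the rest is straightforward counting, which is presumably why the authors indicate the proof is ``substantively the same'' as the argument for Lemma 3 of \cite{UFD}. I expect the only real pitfall to be the trivial edge cases (where $I$ is one-dimensional over $F$, or where $D$ or $C$ is empty), all of which are handled directly.
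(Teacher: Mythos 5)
Your second half is fine and is essentially the paper's argument: for each pair $(r,P)$ at most one $u\in F$ can satisfy $tu\in r+P$ (since $t\notin P$, $P$ is prime, and $F\cap P=0$), and similarly at most one $u$ per element of $D'$ via $F\cap\m=0$, so fewer than $|F|$ values of $u$ are forbidden. The gap is in the first half. The ``standard fact'' you invoke --- that a vector space over an infinite field $F$ cannot be a union of fewer than $|F|$ proper subspaces --- is false for infinite-dimensional spaces: a space with countable basis $e_1,e_2,\dots$ over $\mathbf{R}$ is the union of the countably many proper subspaces spanned by $e_1,\dots,e_n$, and $\aleph_0<|\mathbf{R}|$. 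In your application $I$ is an $F$-subspace of $T$ which in every relevant situation is infinite-dimensional over $F$, and $C$ may be infinite, so neither the finite-union version nor the finite-dimensional version of the covering fact applies; the ``carefully chosen'' vectors $v_1,v_2$ in your sketch need not exist, because every pair of vectors may lie in a common member of the covering family (exactly what happens in the counterexample). A telling symptom is that your argument nowhere uses that $I$ is finitely generated (that $T$ is Noetherian), yet some such input is genuinely needed for the existence of $t$.

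The paper supplies exactly this missing input via Nakayama: if $I\subseteq P+\m I$ then, by the modular law and Nakayama applied to the finitely generated module $I/(I\cap P)$, one gets $I\subseteq P$; hence it suffices to show that the finite-dimensional $T/\m$-vector space $I/\m I$ is not covered by the images of the $P+\m I$, and since $F$ embeds in $T/\m$ (as $F\cap\m=0$) we have $|C|<|F|\le|T/\m|$, where the covering fact is valid for finite-dimensional spaces. An alternative repair that stays closer to your setup: take a finite generating set $a_1,\dots,a_n$ of $I$ and consider $t_c=a_1+ca_2+\cdots+c^{n-1}a_n$ for $c\in F$; if $t_c\in P$ for $n$ distinct values of $c$, the Vandermonde determinant is a nonzero element of $F$, hence a unit of $T$, forcing $a_1,\dots,a_n\in P$ and $I\subseteq P$, so each $P\in C$ excludes at most $n-1$ values of $c$ and fewer than $|F|$ values are excluded in total. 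With either repair your proof goes through; as written, the existence of $t$ is not established.
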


\begin{proof}
We first prove $I\nsubseteq \bigcup\{P| P\in C\}$.
It suffices to prove $I\nsubseteq \bigcup\{P+\m I | P\in C\}$.
By Nakayama's Lemma, $I\nsubseteq P+\m I$ for any $P\in C$.
Letting $V=I/\m I$, we have reduced the problem to showing that if a finite-dimensional
vector space $V$ over $T/\m$ is the union of $|C|$ subspaces where $|C|<|T/\m|$, then one
subspace must be all of $V$.
This is surely well known and the proof is even written out as part of the proof of Lemma 3 in \cite{UFD}.

To prove the lemma, it suffices to prove the second paragraph.
We choose an element $t\in I-\bigcup\{P | P\in C\}$.
Then, for any $(r,P)\in D \times C$, 
we have $tu\in r+P$ if and only if $u\equiv t^{-1}r$ modulo $P$.
If $r+P\notin (t+P)(T/P)$, then $tu\notin r+P$.
Otherwise, $t^{-1}r\equiv s$ modulo $P$ and we can obtain $tu\notin r+P$ by choosing $u\notin s+P$.
For each such pair $(r,P)$, we choose a coset representative $s$ and we label the full set of
coset representatives $D^*$.
Then $|D^*\cup D^{\prime}|\leq |C \times D|+|D^{\prime}|<|F|$ and so we can choose $u\in F$ such that $u\not\equiv s$ modulo $\m$ for any $s\in D^*\cup D^{\prime}$.
Clearly $u\notin \bigcup \{s+P| s\in D^*, P\in C\}\bigcup \{r+\m|  r\in D^{\prime}\}$
and so $tu\notin \bigcup \{r+P|  r\in D, P\in C\}$ and $u\notin \bigcup \{r+\m|  r\in D^{\prime}\}$.

\end{proof}

We repeat a definition from \cite{UFD}.

\begin{defn}
Let $(T,\m)$ be a complete local ring and let $(R,\m\cap R)$ be a quasi-local unique factorization domain contained in $T$ satisfying:
\begin{enumerate}
\item $|R| \leq \sup (\aleph_0,|T/\m|)$ with equality only if $|T/\m|$ is countable,
\item $Q\cap R=(0)$ for all $Q\in \Ass (T)$, and
\item if $t\in T$ is regular and $P\in \Ass (T/tT)$, then $\Ht (P\cap R) \leq 1$.
\end{enumerate}
Then $R$ is called an N-\emph{subring} of $T$.
\end{defn}

In the present circumstances, we will let $T=\mathbf{ R}[[x,y,z,w]]/(x^2+y^2)$.
Here condition (2) of N-subring is vacuous and condition (1) is just $|R|<|\mathbf{ R}|$.

\begin{defn}
$(R,\Gamma)$ is called an N-pair if $R$ is an N-subring of $T$ and $\Gamma$ is a set of pairs
$ \{(P_{\lambda},V_{\lambda}) | \lambda\in \Gamma\}$ with $\{P_{\lambda}\}$ a distinct set of primes such that, for every $\lambda$,

\begin{enumerate}
  \item $z,w\in R$
  \item  $(x,y)T\cap R=(0)$
   \item $P_{\lambda}=(x,y,z+u_{\lambda}w)T$ for some $u_{\lambda}\in \mathbf{ R}\cap R$
   \item $V_{\lambda}\subset T$ and $|V_{\lambda}|\leq |R|$
   \item the images in $T/P_\lambda$ of the elements of $V_\lambda$ are algebraically independent over $R/P_\lambda \cap R$
\item if $Q_{\lambda}=P_{\lambda}\cap R$, then
 $Q_{\lambda}R_{Q_{\lambda}}=(x+\alpha_{\lambda}t_{\lambda}, y+\beta_{\lambda}t_{\lambda},t_{\lambda})R_{Q_{\lambda}}$
 for some $\alpha_{\lambda},\beta_{\lambda}\in \mathbf{R}$ where $t_{\lambda}=z+u_{\lambda}w$.

\
\end{enumerate}
Note that the requirement that the elements of the set $\{P_{\lambda}\}$ be distinct and of the specified form forces $|\Gamma|\leq|R|$.
In practice, $V_{\lambda}$ will always be a singleton set, but the additional generality does not require any extra effort.
\end{defn}

\begin{lemma}\label {3}
Suppose  $R$ is an N-subring of $T$, $v\in T$,
$\Lambda=\{(P_{\lambda},V_{\lambda}) |\lambda\in\Lambda\}$ where each $P_{\lambda}\in\Spec T$ and each $V_{\lambda}$ is a subset of $T$, $H$ is a finite subset of the set of height one prime ideals of $T$ which have the form $rT$ for $r\in R$, and $I$ is an ideal of $T$.
Let $G=\{P\in  \Ass (T/rT)| r\in R\}$ and $\Delta=\{P_{\lambda} | \lambda\in\Lambda\}$.
Assume
\begin{enumerate}
  \item $I\nsubseteq P$ for every $P\in \Delta\cup (G-H)$
  \item if $P^{\prime}\in H$, then $P^{\prime}\nsubseteq P$ for every $P\in \Delta$
  \item  $|\Lambda|\leq |R|$ and $|V_{\lambda}|\leq |R|$ for every $\lambda$
\end{enumerate}
Then there exists an element $d\in I$ such that if $S$ is the intersection of $T$ with the quotient field of $R[v+d]$, $S$ is an N-subring of $T$ such that, for each $\lambda\in\Lambda$,
the image of $v+d$ in $T/P_{\lambda}$ is transcendental over $(R/P_{\lambda}\cap R)[\overline{V}_{\lambda}]$ and for each $P\in G-H$, the image of $v+d$ in $T/P$ is transcendental over $(R/P\cap R)$.
Moreover, if $I=tT$ is a principal ideal, we may choose $d=tu$ with $u\in\mathbf{R}$
and $u\notin \bigcup \{r+\m|  r\in D^{\prime}\}$ for any prescribed set $D^{\prime}$ with $|D^{\prime}|<|\mathbf{R}|$.
\end{lemma}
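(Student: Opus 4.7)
The strategy is to apply Lemma \ref{avoidance} with a carefully chosen family of forbidden cosets so that a single element $d \in I$ simultaneously forces the required transcendence of $v + d$ modulo every prime in $\Delta \cup (G - H)$, and then to verify that the saturation $S = \mathrm{Quot}(R[v+d]) \cap T$ remains an N-subring of $T$ by the standard arguments used for N-subring constructions in \cite{UFD}.

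\emph{Reducing transcendence to avoidance.} Let $C = \Delta \cup (G - H)$; hypotheses (1) and (2) supply $I \nsubseteq P$ for every $P \in C$. For each $P \in C$ enumerate the nonzero polynomials over the prescribed coefficient ring---polynomials over $(R/P_{\lambda} \cap R)[\overline{V}_{\lambda}]$ when $P = P_{\lambda} \in \Delta$, and over $R/P \cap R$ when $P \in G - H$. Hypothesis (3) together with $|R[V_{\lambda}]| \le |R|$ bounds the total count of such polynomials by $|R|$. Since $T/P$ is a domain, each such nonzero polynomial has only finitely many roots there, and collecting the translates $\rho - v$ of all such roots $\rho$ into one set $D \subset T$ yields $|D| \le |R|$. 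Therefore $|C \times D| \le |R| < |\mathbf{R}|$ by N-subring condition (1) (here $T/\m = \mathbf{R}$ is uncountable), and Lemma \ref{avoidance} produces $d \in I$ with $v + d \notin r + P$ for every $(r, P) \in D \times C$. In the principal-ideal case $I = tT$ the moreover clause of Lemma \ref{avoidance} furthermore delivers $d = tu$ with $u \in \mathbf{R}$ additionally avoiding the prescribed cosets $r + \m$ for $r \in D'$.

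\emph{Verifying $S$ is an N-subring.} Cardinality is immediate from $|R[v+d]| = |R|$, so condition (1) holds; condition (2) is vacuous because $T$ is a domain. For condition (3), fix regular $t \in T$ and $P \in \Ass(T/tT) \subseteq G$. If $P \in G - H$, the transcendence of $\overline{v + d}$ over $R/P \cap R$ identifies $R[v+d]/(P \cap R[v+d])$ with the polynomial ring $(R/P \cap R)[X]$; combining $\Ht(P \cap R) \le 1$ (inherited from $R$ being an N-subring) with $\dim R[v+d] \le \dim R + 1$ by a straightforward dimension count yields $\Ht(P \cap R[v+d]) \le 1$, which passes to $S$ under the saturation step. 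If $P \in H$, then $P = rT$ for some $r \in R$, the contraction $P \cap R = rR$ is already of height one, and a standard argument from \cite{UFD} shows that the height-one property persists for $P \cap S$ despite the absence of a transcendence hypothesis here, thanks to the finiteness of $H$.

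The delicate step is the cardinality bookkeeping: one must check that each candidate polynomial produces only finitely many roots in $T/P$ (using only that $T/P$ is a domain) and that aggregating across the $|R|$-sized enumeration of polynomials and $|R|$-sized enumeration of primes in $C$ yields $|C \times D|$ strictly less than $|\mathbf{R}|$, which is what makes Lemma \ref{avoidance} applicable. A secondary subtlety is the treatment of the finitely many primes in $H$, where the transcendence lever is deliberately withheld; one has to verify directly that their contractions to $S$ cannot jump to height two under the extension $R \subseteq R[v+d]$.
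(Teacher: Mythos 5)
Your first half is essentially the paper's own argument and is correct: for each $P\in C=\Delta\cup(G-H)$ the residues of $T/P$ that are algebraic over the relevant coefficient ring $A(P)$ form a set of cardinality at most $|R|$ (at most $|R|$ polynomials, each with finitely many roots in the domain $T/P$), so after choosing coset representatives one gets $|C\times D|\le |R|<|\mathbf{R}|$ and Lemma~\ref{avoidance} produces the required $d\in I$, with the principal case $I=tT$ handled by its moreover clause. This is exactly how the paper proceeds.

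The verification that $S$ is an N-subring, however, has genuine gaps. First, condition (3) must be checked for every regular $t\in T$ and every $P\in\Ass(T/tT)$, and your inclusion $\Ass(T/tT)\subseteq G$ is false: $G$ only collects associated primes of $T/rT$ with $r\in R$. The omitted case is precisely $P\cap R=(0)$; such a $P$ can still meet $R[v+d]$ and $S$ nontrivially, and one must show $\Ht(P\cap S)\le 1$. The paper does this by observing that every nonzero element of $R$ is invertible in $S_{P\cap S}$, so $R_{(0)}[v+d]\subseteq S_{P\cap S}\subseteq R[v+d]_{(0)}$, forcing $S_{P\cap S}$ to be a field or a discrete valuation ring. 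Second, for $P\in G-H$ your ``straightforward dimension count'' is a non sequitur: a bound $\dim R[v+d]\le\dim R+1$ (itself unjustified, since N-subrings need not be Noetherian) does not bound the height of the particular prime $P\cap R[v+d]$, and the coheight inequality you would need can fail without catenarity. What the transcendence actually yields, and what the paper uses, is the equality $P\cap R[v+d]=(P\cap R)R[v+d]$, principal and generated by a prime element of $R$, together with an identification of $S_{P\cap S}$ as a localization of $R_{P\cap R}[v+d]$ at the unique prime minimal over $P\cap R$; your phrase ``passes to $S$ under the saturation step'' is exactly the point that requires proof, since heights of contractions could a priori jump when one enlarges $R[v+d]$ inside its quotient field. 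Third, being an N-subring also requires $S$ to be a quasi-local UFD, which you never address: the paper notes that $S$ is quasi-local because it is the intersection of the local domain $T$ with a subfield of its quotient field, and obtains the UFD property by inverting the known principal primes so as to reduce to a localization of $R_{(0)}[v+d]$. A blanket appeal to ``standard arguments'' from \cite{UFD} does not substitute for these three verifications, which are the substantive content of the second half of the lemma.
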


\begin{proof}
Let $C=\Delta\cup (G-H)$.
For each $P\in C$, we define a subring $A(P)$ of $T/P$.
If $P\notin \Delta$, $A(P)=R/P\cap R$.
If $P=P_{\lambda}\in\Delta$, $A(P)=(R/P_{\lambda}\cap R)[\overline{V}_{\lambda}]$.
In either case, $A(P)$ has cardinality at most $|R|$ and so the same is true for the algebraic closure of $A(P)$ in $T/P$.
Let $D_P$ be a complete set of coset representatives modulo $P$ of $\{t\in T | v+t \text{ is algebraic over } A(P)\}$.
Therefore $|D_P|\leq |R|$.
Now $|C|\leq |R|$ and so also, if $D=\bigcup_{P\in C}D_P$ then $|D|\leq|R|$, and finally $|C \times D|\leq
|R|<|\mathbf{R}|$.
Hence we may apply Lemma~\ref {avoidance} to choose $d\in I$ so that
the image of $v+d$ in $T/P$, is transcendental over $A(P)$ for every $P\in C$.
We use this choice of $d$ to define $S$.
The construction immediately forces  the image of $v+d$ in $T/P_{\lambda}$ to be transcendental over $(R/P_{\lambda}\cap R)[\overline{V}_{\lambda}]$  for each $\lambda\in\Lambda$
and the image of $v+d$ in $T/P$ to be transcendental over $(R/P\cap R)$ for each $P\in G-H$.
Additionally, in the case where $I=tT$ is principal, Lemma~\ref {avoidance} allows us to choose
$d=tu$ so that the final statement of the lemma holds.

It only remains to show that $S$ is an N-subring.
Suppose $P\in \Ass (T/tT)$ for some nonzero $t\in T$.
If $P\cap R=(0)$ and $Q=P\cap S$, then $R_{(0)}[v+d]\subset S_Q\subseteq R[v+d]_{(0)}$
and $S_Q$ is either a discrete valuation ring or a field.
In either case, $\Ht (P\cap S) \leq 1$.
If $P\cap R\neq (0)$ then $P\in G$.
If $P\in G-H$ and  $Q=P\cap S$, the fact that $v+d$ is transcendental over $R/(P\cap R)$  tells us that $S_Q$ is the localization of $R_{P\cap R}[v+d]$ at the unique prime ideal minimal over $P\cap R$.
Thus $Q$ is a height one prime ideal of $S$ principally generated by the generator of $P\cap R$.
Finally, if $P\in H$, there exists $r\in R$ such that $P=rT$.
Then $Q=P\cap S\subset rT$ and since $rT\cap S=rS$, we see that $Q$ is also a principal height one prime ideal of $S$.
This shows that condition (3) of N-subring is satisfied and that is the only one of the three enumerated conditions that needs to be checked as $|S|=|R|$.
We also note that the intersection of a local domain with a subfield of its quotient field is always quasi-local.
To prove $S$ is a UFD, we may first invert any known principal prime elements and so reduce to the case where our ring is a localization of $R_{(0)}[v+d]$ and so is of course a UFD.
Thus $S$ is an N-subring of $T$.

\end{proof}

\begin{remark}
As we will repeatedly employ Lemma~\ref{3}, it seems useful to give a quick summary of how it is used.
We start with a ring $R$ (which is always obvious from the context) and must specify $v,\Lambda,H,I$.
The lemma then gives us an element $d$ and a ring $S$ satisfying the properties we need.
\end{remark}

\begin{remark}
The reader may have noted that the proof did not require $H$ to be finite, nor did it require
hypothesis (2).
However, as any nonzero ideal $I$ can be contained in at most finitely many
members of $G$, none of which are contained in $\Delta$, these hypotheses place no
restriction on the situations in which Lemma~\ref{3} can be used.
The hypotheses are critical in the next two lemmas.
\end{remark}

It will be useful if we can describe $S$ more precisely.

\begin{lemma}\label {describe}
Let  $S$ be the N-subring obtained in Lemma~\ref{3} and let $\eta=v+d$.
Let $g\in \m\cap R[\eta]$ be a prime element of  $R[\eta]$.
If $g\in R$, then $g$ is a prime element of $S$.
If $g\notin R$, then there is an element $r\in R$, possibly $r=1$, such that every height one prime ideal of $T$ which contains $r$ is in $H$, and $g/r$ is either a prime element or a unit of $S$.
Consequently, if $H=\{r_1T,\ldots,r_nT\}$ with each $r_i\in R$ and $t=\prod r_i$,  then there exists a domain $L$ such that $R[\eta]\subseteq L\subset R[\eta,t^{-1}]$ and $S=L_{\m\cap L}$.
In particular, if $H=\emptyset$, we have $S=R[\eta]_{\m\cap R[\eta]}$.

\end{lemma}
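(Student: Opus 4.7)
The plan is to analyze $S = T \cap \mathrm{qf}(R[\eta])$ by classifying its height one primes using the description developed in the proof of Lemma~\ref{3}. A preliminary observation is that $\eta = v+d$ is transcendental over $R$ in $T$ (this follows because, by the construction in Lemma~\ref{3}, $\eta$ is transcendental over $R/(P\cap R)$ modulo various primes $P$), so $R[\eta]$ is a polynomial ring over the UFD $R$ and is itself a UFD. Every element of $\mathrm{qf}(R[\eta])$ therefore admits a coprime representation $p/q$, and $S$ sits between $R[\eta]$ and $\mathrm{qf}(R[\eta])$.

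For the first statement, suppose $g \in R$ is prime in $R[\eta]$; then $g$ is prime in $R$. By axiom (3) of N-subring, every $P \in \Ass(T/gT)$ satisfies $P \cap R = gR$. If some such $P$ lies in $G - H$, Lemma~\ref{3}'s proof identifies $P \cap S = gS$, exhibiting $gS$ as the contraction of a prime, hence prime. Otherwise every associated prime of $gT$ is of the form $r_iT$ with $r_i \in R$; combining $g \in r_iT$ with $r_iT \cap R = gR$ forces $r_i$ to be a unit multiple of $g$ in $R$, so $gT$ itself is a height one prime of $T$. An easy check gives $gT \cap S = gS$ (since $s \in gT \cap S$ yields $s/g \in T \cap \mathrm{qf}(R[\eta]) = S$), so $gS$ is again a contraction of a prime of $T$, hence prime in $S$.

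For the second statement, suppose $g$ is prime in $R[\eta]$ with $g \notin R$, so $g$ is a primitive irreducible polynomial in $\eta$. Enumerate $\Ass(T/gT) \cap H = \{r_1T,\dots,r_kT\}$ and set $r = r_1 \cdots r_k$ (or $r=1$ if $k=0$); then every height one prime of $T$ containing $r$ is among the $r_iT$, hence in $H$. Distinct principal height one primes of the domain $T$ satisfy $r_iT \cap r_jT = r_ir_jT$, because $T/r_jT$ is a domain in which $r_i$ is nonzero; iterating gives $g \in \bigcap r_iT = rT$, so $r \mid g$ in $T$ and $g/r \in T \cap \mathrm{qf}(R[\eta]) = S$. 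A direct check then shows that the minimal primes of $(g/r)T$ are exactly $\Ass(T/gT) \setminus H$, each of which lies in $G - H$ or has trivial contraction to $R$. Applying the description in Lemma~\ref{3} to each such prime, and using the irreducibility of $g$ in $R[\eta]$ to rule out further factorization, one concludes $(g/r)S$ is a height one prime of $S$ and $g/r$ is prime in $S$, unless $\Ass(T/gT) \subseteq H$, in which case $g/r$ is a unit.

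For the consequence, set $t = r_1 \cdots r_n$ and let $L$ be the subring of $R[\eta, t^{-1}]$ generated over $R[\eta]$ by all quotients $g'/r'$ produced above as $g'$ ranges over prime elements of $R[\eta]$ lying in $\m \cap R[\eta]$. Then $R[\eta] \subseteq L \subset R[\eta, t^{-1}]$; since $S$ is a UFD (by Lemma~\ref{3}) whose prime elements are, up to units of $S$, either in $R[\eta]$ or of the form $g'/r' \in L$, every element of $S$ is a unit of $S$ times an element of $L$, and quasi-locality of $S$ with maximal ideal $\m \cap S$ yields $S = L_{\m \cap L}$. When $H = \emptyset$ we always have $r=1$, so no quotients are introduced and $L = R[\eta]$, giving $S = R[\eta]_{\m \cap R[\eta]}$. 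The hardest step will be the second statement: the comaximality observation in the domain $T$ secures the divisibility $r \mid g$, but pinning down $g/r$ as prime (or unit) in $S$ requires careful tracking of which associated primes of $gT$ survive division by $r$ and how they contract to $S$.
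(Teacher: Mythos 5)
The first statement and the closing reduction are handled in essentially the paper's spirit, but there is a genuine gap in your second statement, and it is exactly where you yourself flag the difficulty. You take $r=r_1\cdots r_k$, the product of the distinct generators of the primes in $\Ass(T/gT)\cap H$, each to the first power. Nothing in Lemma~\ref{3} controls $g$ modulo powers of the primes in $H$ --- those are precisely the primes exempted from all transcendence conditions --- so it can perfectly well happen that $g\in r_i^{2}T$ for some $i$. In that case your ``direct check'' that the minimal primes of $(g/r)T$ are exactly $\Ass(T/gT)\setminus H$ is false: $r_iT$ is still minimal over $(g/r)T$, and since $r_iT\cap S=r_iS$ (noted in the proof of Lemma~\ref{3}), the prime element $r_i$ of $S$ divides $g/r$, so $g/r$ need not be a prime or a unit of $S$. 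This is why the paper does not specify $r$: it divides out the full power of each $r_i$ occurring in $g$, choosing $r$ so that $g/r\notin P$ for \emph{every} $P\in G$, not merely so that the height one primes containing $r$ lie in $H$. Note also that excluding the primes of $G-H$ requires an argument you never give but implicitly need: since $g$ is prime in $R[\eta]$ and not in $R$, no height one prime of $R$ contains all its coefficients, and then transcendence of $\eta$ modulo each $P\in G-H$ gives $g\notin P$; without this, a prime factor of $g/r$ contracted from such a $P$ would equally destroy primality.

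Even with $r$ corrected, your step ``applying the description in Lemma~\ref{3} to each such prime, and using the irreducibility of $g$ in $R[\eta]$ to rule out further factorization'' asserts the conclusion rather than proving it: knowing the minimal primes of $(g/r)T$ and their contractions does not identify the factorization of $g/r$ in the UFD $S$. The paper's mechanism is different: because $g/r$ avoids every $P\in G$, no prime factor of $g/r$ in $S$ can divide a nonzero element of $R$, so inverting $R\setminus\{0\}$ does not disturb its factorization; that localization of $S$ is a localization of $K[\eta]$, $K$ the quotient field of $R$, where $g/r$ is prime, whence $g/r$ is prime or a unit in $S$. The same looseness infects your last paragraph: the claim that every prime element of $S$ is, up to units of $S$, in $R[\eta]$ or of the form $g'/r'$ is not established, and passing from ``every element of $S$ is a unit of $S$ times an element of $L$'' to $S=L_{\m\cap L}$ still requires showing those units lie in $L_{\m\cap L}$. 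The paper sidesteps this by taking $L=R[\eta,t^{-1}]\cap S$ and arguing on a coprime representation $a/b$ with $a,b\in R[\eta]$, rewriting each prime factor $g$ as $(g/r)r$ and cancelling, so that the denominator becomes a unit while numerator and denominator stay in $L$.
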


\begin{proof}
The fact that prime elements of $R$ remain prime in $S$ was noted in the proof of Lemma~\ref{3}.
So suppose $g\notin R$.
As $g$ is prime in $R[\eta]$, no height one prime ideal of $R$ contains all of the coefficients of $g$.
By construction, $\eta$ is transcendental over $R/P\cap R$ for every $P\in G-H$ and so $g\notin P$.
As $H$ is finite, it follows that we can find $r\in R$ such that  $g/r\in S$,  $g/r\notin P$ for every $P\in G$ and every height one prime ideal of $T$ which contains $r$ is in $H$.
Now, to determine the prime factorization of $g/r$, we can safely invert all nonzero elements of $R$.
This localization of $S$ must be an overring of the principal ideal domain $K[\eta]$, where $K$ is the quotient field of $R$, and so a localization of $K[\eta]$.
As $g/r$ is a prime element of $K[\eta]$, it is either a prime element or a unit in $S$.
Now let $L=R[\eta,t^{-1}]\cap S$.
Let $c\in S$.
As $S$ is contained in the quotient field of $R[\eta]$, we can write $c=a/b$ with $a,b\in R[\eta]$
and express both as a product of prime elements of $R[\eta]$.
We then alter the factorizations slightly, replacing each $g\in \m\cap R[\eta]$ by the corresponding $g/r$, which is an element of $L$.
 (We actually write $g=(g/r)r$, so $a,b$ do not change.)
After canceling common factors, $b$ is necessarily a unit in $S$ and since $a,b\in L$, $S$ is a localization of $L$.
The final statement is now obvious.
\end{proof}

\begin{defn}  If  $(R_1,\Gamma_1)$ and $(R_2,\Gamma_2)$ are N-pairs such that
 $R_1\subseteq R_2$, $\Gamma_1\subseteq \Gamma_2$, and $|R_1|=|R_2|$, we say $(R_2,\Gamma_2)$ is an A-extension of $(R_1,\Gamma_1)$.
\end{defn}

\begin{lemma}
If $(R,\Gamma)$ is an N-pair, then any application of Lemma~\ref{3} with $\Gamma\cup\{((x,y)T,\emptyset)\}\subseteq \Lambda$ will produce an N-subring $S$ such that $(S,\Gamma)$ is an A-extension of $(R,\Gamma)$.
\end{lemma}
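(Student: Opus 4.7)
The plan is to verify each clause of the N-pair definition for $(S,\Gamma)$; the inclusion $R\subseteq S$, the equality $|R|=|S|$, and the fact that $S$ is itself an N-subring are all built into the conclusion of Lemma~\ref{3}. Set $\eta=v+d$. Conditions (1), (3), and (4) are inherited verbatim: $z$, $w$, and each $u_\lambda$ lie in $R\subseteq S$, the sets $V_\lambda$ are unchanged, and $|V_\lambda|\leq|R|=|S|$. It remains to verify (2), (5), and (6).

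For (2), I exploit that $((x,y)T,\emptyset)$ was explicitly placed in $\Lambda$. Lemma~\ref{3} then says that the image $\bar\eta$ of $\eta$ in $T/(x,y)T$ is transcendental over $R/(R\cap(x,y)T)=R$. Hence $R[\eta]$ injects into the domain $T/(x,y)T\cong\mathbf{R}[[z,w]]$, and this injection extends to an embedding of quotient fields. Its restriction to $S\subseteq\operatorname{Frac}R[\eta]$ agrees with the natural composition $S\hookrightarrow T\to T/(x,y)T$, whose injectivity then forces $S\cap(x,y)T=(0)$.

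For (5) and (6), fix $\lambda\in\Gamma$ and set $Q_\lambda=P_\lambda\cap R$, $Q_\lambda'=P_\lambda\cap S$. Hypothesis (2) of Lemma~\ref{3} prevents any $r_iT\in H$ from being contained in $P_\lambda$, so every such $r_i$ is a unit in $R_{Q_\lambda}$, and inverting $t=\prod r_i$ contributes nothing after localizing at $Q_\lambda'$. Combining this with Lemma~\ref{describe}'s description of $S$, together with the identity $P_\lambda\cap R[\eta]=Q_\lambda R[\eta]$ (a direct consequence of $\bar\eta$ being transcendental over $R/Q_\lambda$ in $T/P_\lambda$), one identifies
\[
S_{Q_\lambda'}\;=\;R_{Q_\lambda}[\eta]_{\,Q_\lambda R_{Q_\lambda}[\eta]}.
\]
Condition (6) is then immediate: the maximal ideal $Q_\lambda'S_{Q_\lambda'}$ is generated by the maximal ideal $Q_\lambda R_{Q_\lambda}$ of $R_{Q_\lambda}$, which by the hypothesis on $(R,\Gamma)$ equals $(x+\alpha_\lambda t_\lambda,\,y+\beta_\lambda t_\lambda,\,t_\lambda)R_{Q_\lambda}$.

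For (5), the residue field of the above local ring is $\kappa(Q_\lambda)(\bar\eta)$, the quotient field of $\kappa(Q_\lambda)[\bar\eta]$, sitting inside $\operatorname{Frac}(T/P_\lambda)$. By assumption $\bar V_\lambda$ is algebraically independent over $R/Q_\lambda$, and by Lemma~\ref{3} $\bar\eta$ is transcendental over $(R/Q_\lambda)[\bar V_\lambda]$; hence $\bar V_\lambda\cup\{\bar\eta\}$ is algebraically independent over $R/Q_\lambda$. Exchanging the roles of $\bar V_\lambda$ and $\bar\eta$, $\bar V_\lambda$ is algebraically independent over $\kappa(Q_\lambda)(\bar\eta)$, and in particular over the subring $S/Q_\lambda'$ of this residue field. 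The principal technical obstacle is to justify the identification $S_{Q_\lambda'}=R_{Q_\lambda}[\eta]_{Q_\lambda R_{Q_\lambda}[\eta]}$, which amounts to unwinding Lemma~\ref{describe} and verifying that the auxiliary localization at $t$ has no effect at $Q_\lambda'$.
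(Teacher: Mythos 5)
Your proof is correct and follows essentially the same route as the paper: conditions (1)--(4) are inherited (you spell out (2) via the transcendence of $\overline{\eta}$ over $R$ in $T/(x,y)T$, which the paper treats as obvious), and for (5) and (6) you use Lemma~\ref{describe} together with $t\notin P_\lambda$ to identify $S_{P_\lambda\cap S}$ with the localization of $R_{Q_\lambda}[\eta]$ at the extension of $Q_\lambda R_{Q_\lambda}$, then invoke the transcendence of $\overline{\eta}$ over $(R/Q_\lambda)[\overline{V}_\lambda]$ and the exchange property, exactly as in the paper's argument. No gaps of substance.
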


\begin{proof}
We must show $(S,\Gamma)$ is an N-pair; certainly $|S|=|R|$.
To do this, we need only check the six conditions and conditions (1)-(4) are obvious.
Consider any $\lambda\in\Gamma$.
By lemma~\ref{describe}, $S$ is a localization of $R[\eta,t^{-1}]\cap S$ and since $t\notin P_{\lambda}$,
$S_{P_{\lambda}\cap S}$ is a localization of $R[\eta,t^{-1}]$ and so a localization of
$R_{P_{\lambda}\cap R}[\eta]$.
Let $K_{\lambda}$ denote the quotient field of $R/P_{\lambda}\cap R$.
As $\eta$ is transcendental over $K_{\lambda}$, it follows that the quotient field of
$S/P_{\lambda}\cap S$ is just  $K_{\lambda}(\eta)$.
The fact that $\eta$ is transcendental over $K_{\lambda}[\overline{V}_{\lambda}]$ tells us that
$\overline{V}_{\lambda}$ is algebraically independent over $K_{\lambda}(\eta)$ and so condition (5) holds.
Finally, since $S_{P_{\lambda}\cap S}$ is the localization of $R_{P_{\lambda}\cap R}[\eta]$
at the extension of the prime ideal $(P_{\lambda}\cap R)R_{P_{\lambda}\cap R}$,
condition (6) is also satisfied.
\end{proof}

\begin{lemma}\label {two}
Let  $(R,\Gamma)$ be an N-pair and suppose $Q\in \Spec T$ is a height two prime ideal.
If $Q\neq P_{\lambda}$ for all $\lambda\in\Gamma$, then
 there exists $S$, $R\subseteq S\subseteq T$, such that $(S,\Gamma)$ is an A-extension of $(R,\Gamma)$
  and $Q\cap R\nsubseteq P_{\lambda}$ for any $\lambda\in\Gamma$.
\end{lemma}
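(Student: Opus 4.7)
The plan is to invoke Lemma~\ref{3} with $v=0$ and $I=Q$, so that the element $\eta = v+d = d$ the lemma produces already lies in $Q$, and then to exploit the transcendence conclusion to ensure $d$ avoids every $P_\lambda$. Concretely, I will apply Lemma~\ref{3} with $\Lambda = \Gamma \cup \{((x,y)T,\emptyset)\}$ and $H=\emptyset$; this choice of $\Lambda$ is exactly what the preceding lemma demands in order to certify that the resulting ring $(S,\Gamma)$ is an A-extension of $(R,\Gamma)$.

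The work reduces to checking the three hypotheses of Lemma~\ref{3}. Hypothesis (2) is vacuous because $H=\emptyset$, and hypothesis (3) holds because $|\Gamma|\leq |R|$ by the cardinality comment in the definition of N-pair. Hypothesis (1) requires $Q\not\subseteq P$ for every $P\in \Delta\cup G$, where $\Delta = \{P_\lambda : \lambda\in\Gamma\}\cup\{(x,y)T\}$ and $G=\{P\in\Ass(T/rT) : r\in R\}$. Since each $P_\lambda$ is a height-two prime distinct from the height-two prime $Q$, $Q\not\subseteq P_\lambda$. The prime $(x,y)T$ has height one in $T$ (because $T/(x,y)T\cong \mathbf{R}[[z,w]]$ is two-dimensional while $\dim T = 3$), so a height-two $Q$ cannot lie in it. Finally, $T$ is Cohen-Macaulay (being the quotient of a regular ring by the non-zerodivisor $x^2+y^2$) and even a domain, so every element of $G$ is a height-one prime of $T$ (or the zero ideal), and once more $Q$ cannot be contained in such a prime.

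With hypothesis (1) verified, Lemma~\ref{3} produces an element $d\in Q$ and an N-subring $S$ such that the image of $\eta = d$ in each $T/P_\lambda$ is transcendental over $(R/(P_\lambda\cap R))[\overline V_\lambda]$; in particular this image is nonzero, so $d\notin P_\lambda$. Hence $d\in Q\cap S$ but $d\notin P_\lambda$ for every $\lambda \in \Gamma$, yielding $Q\cap S\not\subseteq P_\lambda$, and the preceding lemma declares $(S,\Gamma)$ to be an A-extension of $(R,\Gamma)$, completing the proof. I do not foresee a genuine obstacle here: the statement is tailored to plug directly into Lemma~\ref{3}, and the argument is essentially bookkeeping on heights and cardinalities.
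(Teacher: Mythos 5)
Your proposal is correct and is essentially the paper's own proof: the paper likewise applies Lemma~\ref{3} with $v=0$, $I=Q$, $H=\emptyset$, and $\Lambda=\Gamma\cup\{((x,y)T,\emptyset)\}$, obtaining $d\in Q-\bigcup P_\lambda$ inside the new N-subring $S$ (you merely spell out the height/cardinality checks the paper leaves implicit, and, like the paper's proof, you verify the conclusion for $Q\cap S$, which is clearly the intended reading of the statement).
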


\begin{proof}
Let $\Lambda=\Gamma\cup\{((x,y)T,\emptyset)\}$, let $v=0$, and let $I=Q$.
Now apply Lemma~\ref{3} with $H=\emptyset$ to find an N-subring $S=R[d]_{\m\cap R[d]}$ which clearly satisfies the conclusion as $d\in Q-\bigcup P_{\lambda}$.
\end{proof}

\begin{lemma}\label {S}
Let  $(R,\Gamma)$ be an N-pair and $v\in T$.
Then there exists $S$, $R\subseteq S\subseteq T$, such that $(S,\Gamma)$ is an A-extension of $(R,\Gamma)$
and there exists $c\in S$ with $v-c\in \m^2$.
\end{lemma}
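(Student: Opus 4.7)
The plan is to invoke Lemma~\ref{3} directly, using $v$ from the statement of the lemma as our starting element and perturbing it by an element of $\m^2$, so that the perturbation does not change the residue class modulo $\m^2$.

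More precisely, I would take the input data to Lemma~\ref{3} to be: the given $v$; the set $\Lambda=\Gamma\cup\{((x,y)T,\emptyset)\}$ (so as to make the A-extension lemma applicable); $H=\emptyset$; and $I=\m^2$. Lemma~\ref{3} will then produce an element $d\in \m^2$ and an N-subring $S$ of $T$ obtained from $R$ and $\eta=v+d$. Since $H=\emptyset$, Lemma~\ref{describe} gives $S=R[\eta]_{\m\cap R[\eta]}$, which certainly contains the element $c:=v+d$. Then $v-c=-d\in\m^2$, which is exactly what we need. The fact that $(S,\Gamma)$ is an A-extension of $(R,\Gamma)$ follows automatically from the previous lemma, since we have included $((x,y)T,\emptyset)$ in $\Lambda$.

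The only thing to verify carefully is that the hypotheses of Lemma~\ref{3} are met. Hypothesis (3) of that lemma is immediate: $|\Lambda|\leq|\Gamma|+1\leq|R|$ by the remark in the definition of N-pair, and each $V_\lambda$ has $|V_\lambda|\leq|R|$ by condition (4) of the N-pair definition (the new pair has $V=\emptyset$). Hypothesis (2) is vacuous because $H=\emptyset$. For hypothesis (1), I must check that $\m^2\not\subseteq P$ for every $P\in\Delta\cup G$. But every such $P$ is a proper prime ideal of $T$ distinct from $\m$: the $P_\lambda\in\Delta$ are the height-two primes $(x,y,z+u_\lambda w)T$ and $(x,y)T$, while $G$ consists of associated primes of $T/rT$ for $r\in R$, which are all of height at most one in the Cohen-Macaulay domain $T$. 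Since a prime ideal containing $\m^2$ must contain $\m$, no such $P$ can contain $\m^2$.

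There is no serious obstacle here: the work is merely an elementary hypothesis check for Lemma~\ref{3}, after which the construction and the preceding A-extension lemma do all the heavy lifting. The only subtlety worth noting is the use of $I=\m^2$ (a proper ideal of $T$), which is exactly what forces the perturbation $d$ to lie in $\m^2$ and thereby leaves the residue class of $v$ unchanged modulo $\m^2$.
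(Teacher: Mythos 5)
Your proposal is correct and is essentially identical to the paper's own proof: apply Lemma~\ref{3} with $\Lambda=\Gamma\cup\{((x,y)T,\emptyset)\}$, $H=\emptyset$, $I=\m^2$, and take $c=v+d$, with the A-extension property coming from the preceding lemma. (Your hypothesis check is fine; the only trivial slip is calling $(x,y)T$ height two --- it has height one in $T$ --- but all that matters is that none of the primes involved is the maximal ideal, so none contains $\m^2$.)
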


\begin{proof}
Let $\Lambda=\Gamma\cup\{((x,y)T,\emptyset)\}$ and let $I=\m^2$.
Now apply Lemma~\ref{3} with $H=\emptyset$ to find an N-subring $S$ which contains $c=v+d$ and the result is immediate.
\end{proof}

\begin{lemma}\label {4}
Let  $(R,\Gamma)$ be an N-pair, $c\in R$, $I$ a finitely generated ideal of $R$ and $c\in IT$.
Further suppose that $I\nsubseteq P_{\lambda}$ for all $(P_{\lambda},V_{\lambda})\in\Gamma$.
Then there exists $S$, $R\subseteq S\subseteq T$, such that $(S,\Gamma)$ is an A-extension of $(R,\Gamma)$
and $c\in IS$.
\end{lemma}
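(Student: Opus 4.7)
The plan is to apply Lemma~\ref{3} iteratively, once for each coefficient $t_i$ in an expression $c = \sum_{i=1}^n a_i t_i$ with $t_i \in T$, so as to produce an A-extension $S$ of $R$ with elements $s_i = t_i + d_i \in S$ whose error terms $d_i$ all lie in $aT$ for a single ``good'' element $a \in I$.

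To locate $a$, I apply Lemma~\ref{avoidance} with $D = \{0\}$ and $C = \{P_\lambda : \lambda \in \Gamma\}$, obtaining a nonzero $a \in I$ with $a \notin P_\lambda$ for every $\lambda$. Then for each $i = 1, \ldots, n$ I apply Lemma~\ref{3} to $(R_{i-1}, \Gamma)$ (with $R_0 = R$), using $v = t_i$, $\Lambda = \Gamma \cup \{((x,y)T, \emptyset)\}$, ideal $aT$, and $H$ the finite set of height-one primes of $T$ of the form $rT$ (with $r \in R$, $r \notin P_\mu$ for all $\mu$, $r \neq 0$) that lie in the associated-prime collection $G$ and contain $a$. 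This produces $R_i$, an A-extension of $R_{i-1}$, containing $s_i := t_i + d_i$ for some $d_i \in aT$. Setting $S := R_n$, iteration of Lemma~\ref{describe} implies $S = T \cap \operatorname{Frac}(S)$.

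Since each $d_i \in aT$, the sum $\sum_i a_i d_i$ lies in $aT$; simultaneously $\sum_i a_i d_i = \sum_i a_i s_i - c \in S$, and because $T$ is a domain with $a$ a nonzero element of $S$, the identity $aT \cap S = aS$ (a direct consequence of $S = T \cap \operatorname{Frac}(S)$) yields $\sum_i a_i d_i \in aS \subseteq IS$. Therefore
\[
c = \sum_{i=1}^n a_i s_i - \sum_{i=1}^n a_i d_i \in (a_1, \ldots, a_n) S = IS,
\]
as required.

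The main obstacle will be verifying the hypothesis of Lemma~\ref{3} that $aT \nsubseteq P$ for every $P \in G - H$---equivalently, showing that every prime $P \in G$ with $a \in P$ is of the form $rT$ for some $r \in R$ satisfying $r \notin P_\mu$ and $r \neq 0$, so that $P$ may legitimately be placed in $H$. Such a $P$ is a height-one prime of $T$, so since $a \in P \cap R$ is nonzero, condition~(3) of N-subring forces $P \cap R$ to be a height-one prime in the UFD $R$; writing $P \cap R = qR$, one then invokes condition~(2) of the N-pair definition, namely $(x,y)T \cap R = (0)$, which excludes the manifestly nonprincipal height-one prime $(x,y)T$ of $T$ from $G$ altogether, to conclude that $P = qT$. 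A preliminary application of Lemma~\ref{3} can be used first, if necessary, to adjoin elements ensuring each relevant generator $q$ is already present in $R$.
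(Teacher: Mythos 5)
Your final telescoping step is fine as far as it goes (given $a\in I\subseteq S$ and $S=T\cap\mathrm{Frac}(S)$, indeed $aT\cap S=aS$), but the proof breaks at its two load-bearing points. First, you need an element $a$ of the $R$-ideal $I$ itself lying outside every $P_\lambda$ (you use $a\in I$ to get $aS\subseteq IS$ at the end), and Lemma~\ref{avoidance} cannot supply it: that lemma is about ideals of $T$, and the elements it produces are of the form $tu$ with $u\in\mathbf{R}$, so they lie in $IT$ but in general not in $R$, let alone in $I$. Nor is the existence of such an $a$ clear by any other means: a nonzero element of $R$ lies in only finitely many $P_\lambda$, but $|\Gamma|$ may be as large as $|R|$, so no avoidance or counting argument is available inside $R$ (the uncountable field $\mathbf{R}$ that powers Lemma~\ref{avoidance} sits in $T$, not in $R$). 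This is exactly why the paper does not reduce to a principal ideal: it uses finite prime avoidance to find a two-generated subideal $(y_1,y_2)R\subseteq I$ contained in no $P_\lambda$ and in no height-one prime of $T$, reduces to $I=(y_1,y_2)R$ by applying Lemma~\ref{3} with the ideal $(y_1,y_2)T$ and $H=\emptyset$, and then in the final step adjusts the two coefficients simultaneously, adjoining $t_2+ty_1$ and $t_1-ty_2$ for a single $t\in T$, so that $c=y_1(t_1-ty_2)+y_2(t_2+ty_1)$ is expressed exactly; no identity of the form $aT\cap S=aS$ is ever needed.

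Second, your handling of $H$ does not close the hypothesis $aT\nsubseteq P$ for $P\in G-H$. From $P\cap R=qR$ you conclude $P=qT$, but all that follows is $qT\subseteq P$. The ring $T=\mathbf{R}[[x,y,z,w]]/(x^2+y^2)$ is not a UFD (its singular locus $V(x,y)$ has codimension one, so $T$ is not even normal), and nothing in the definition of N-subring prevents a non-principal height-one prime of $T$ from belonging to $\Ass(T/qT)$ for $q\in R$; the paper's own proof of Lemma~\ref{3} treats primes $P\in G-H$ whose contraction to $R$ has height one by a transcendence argument rather than by principality, which shows such primes are expected. Excluding the single prime $(x,y)T$ via $(x,y)T\cap R=(0)$ does not rule out other non-principal members of $G$, and your proposed preliminary enlargement of $R$ cannot help, since the obstruction is principality of $P$ in $T$, not membership of a generator in $R$ (moreover each enlargement of $R$ enlarges $G$, so the condition would have to be re-established at every one of your $n$ applications of Lemma~\ref{3}). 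The paper sidesteps this entirely: its reduction steps feed Lemma~\ref{3} the ideal $(y_1,y_2)T$, chosen to lie in no height-one prime of $T$, with $H=\emptyset$, and its final two-generator step reruns the avoidance argument directly with $C=\Delta\cup G$, using whichever of $y_1,y_2$ lies outside each $P\in G$.
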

\begin{proof}
Let $\Lambda=\Gamma\cup\{((x,y)T,\emptyset)\}$.
If $I$ is contained in a height one prime ideal of $T$, because $R$ is a UFD,
 $I=bJ$ with $J$ not contained in a height one prime ideal of $T$.
 It suffices to prove the lemma with $J$ in place of $I$ and $b^{-1}c$ in place of $c$, and so we may assume
 $I$ is not contained in a height one prime ideal of $T$.
 Choose any nonzero $y_1\in I$.
 As $y_1\notin (x,y)T$, $y_1\in P_{\lambda}$ if and only if $P_{\lambda}$ is minimal over $(x,y,y_1)T$.
 Hence $y_1\in P_{\lambda}$ for at most finitely many $\lambda$.
 By the usual prime avoidance lemma, we can find $y_2\in I$ such that $(y_1,y_2)R$
 is contained in no $P_{\lambda}$ and in no height one prime ideal of $T$.
 We then choose $y_3,\ldots,y_n\in I$ so that $I=(y_1,\ldots,y_n)R$.
As $c\in IT$, we may write $c=\sum y_it_i$ with each $t_i\in T$.
We now apply Lemma~\ref{3} with $v=t_3$, $H=\emptyset$, and $(y_1,y_2)R$ playing the role of $I$.
Since $d\in (y_1,y_2)T$,
this allows us to find a new expression for $c$ as a linear combination of $y_1,\ldots,y_n$ with
$t_3$ replaced by $t_3+d$ and with suitably altered values of $t_1$ and $t_2$.
Hence, by enlarging $R$, we reduce to the case $t_3\in R$.
We proceed similarly with $t_4,\ldots,t_n$.
So $c-t_3y_3-\cdots-t_ny_n\in (y_1,y_2)T$ and so it will suffice to prove the lemma in the case $I=(y_1,y_2)R$.

The proof in this case closely resembles the proof of Lemma~\ref{3}.
Let $\Delta=\{P_{\lambda} | \lambda\in\Lambda\}$, $G=\{P\in  \Ass (T/rT)| r\in R\}$, and $C=\Delta\cup G$.
For each $P\in C$, we define a ring $A(P)$.
If $P\notin \Delta$, $A(P)=R/P\cap R$.
If $P=P_{\lambda}\in\Delta$, $A(P)=(R/P_{\lambda}\cap R)[\overline{V}_{\lambda}]$.
In each case, $A(P)$ has cardinality at most $|R|$ and so the same is true for the algebraic closure of $A(P)$ in $T/P$.
If $y_1\notin P$, let $D_P$ be a complete set of coset representatives modulo $P$ of $\{t\in T | t_2+ty_1\text{ is algebraic over } A(P)\}$.
If $y_1\in P$, then $y_2\notin P$ and we let $D_P$ be a complete set of coset representatives modulo $P$ of $\{t\in T | t_1-ty_2\text{ is algebraic over } A(P)\}$.
Therefore $|D_P|\leq|R|$.
Now $|C|\leq |R|$, and so also if $D=\bigcup_{P\in C}D_P$, then $|D|\leq|R|$, and finally $|C \times D|\leq|R|<|\mathbf{R}|$.
Hence we may apply Lemma~\ref {avoidance} to choose $t\in T$ so that either
the image of $t_2+ty_1$ in $T/P$ or the image of $t_1-ty_2$ in $T/P$  is transcendental over $A(P)$ for every $P\in C$.
Let $S$ be the intersection of $T$ with the quotient field of $R[t_2+ty_1]$, which is also
the intersection of $T$ with the quotient field of $R[t_1-ty_2]$ as $c=t_1y_1+t_2y_2$.
Clearly $c=y_1(t_1-ty_2)+y_2(t_2+ty_1)\in IS$.
Verifying that $(S,\Gamma)$ is an N-pair follows the same steps performed in the proof of Lemma ~\ref{3} and of course $|S|=|R|$.

\end{proof}

\begin{lemma}\label{intersection}
Let $R$ be an N-subring of $T$ with quotient field $K$.
Suppose $P\in\Spec T$ and $Q=P\cap R$.
Then $R_Q=T_P\cap K$.
\end{lemma}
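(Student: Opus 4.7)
The plan is to prove the two inclusions $R_Q \subseteq T_P \cap K$ and $T_P \cap K \subseteq R_Q$ separately. The first is essentially formal: an element $a/b \in R_Q$ has $b \in R \setminus Q$, i.e., $b \notin P$, so $a/b \in T_P$. The only subtlety is checking that the natural map $R_Q \to T_P$ is injective so that this comparison is meaningful; if $a/b$ vanishes in $T_P$ with $a, b \in R$ and $b \notin P$, then $ta = 0$ for some $t \notin P$, placing $a$ in $\bigcup_{Q' \in \Ass(T)} Q'$, whence N-subring condition (2) gives $a = 0$.

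For the reverse inclusion, fix a nonzero $\alpha \in T_P \cap K$. Since $R$ is a UFD, write $\alpha = a/b$ with $a, b \in R$ coprime; the task reduces to showing $b \notin P$. Suppose for contradiction that $b \in P$. Then some irreducible factor $\pi$ of $b$ lies in $P$, and writing $b = \pi^n c$ with $n \ge 1$ and $\pi \nmid c$, coprimality of $a, b$ also gives $\pi \nmid a$.

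The crux is to detect $\pi$ in a sufficiently small localization. Condition (2) of N-subring makes $\pi$ regular in $T$, so we can pick a minimal prime $P_1$ of $\pi T$ with $P_1 \subseteq P$ (such exists because $\pi T \subseteq P$). Since $P_1 \in \Ass_T(T/\pi T)$, condition (3) forces $\Ht(P_1 \cap R) \le 1$; and since $\pi$ is a nonzero irreducible of the UFD $R$ lying in $P_1 \cap R$, this pins $P_1 \cap R$ down to the height one prime $(\pi)R$. Hence both $a$ and $c$, being coprime to $\pi$ in $R$, lie outside $P_1$ and are units in $T_{P_1}$. Since $P_1 \subseteq P$, we have $\alpha \in T_P \subseteq T_{P_1}$, and the equation $\alpha = a/(\pi^n c)$ then forces $\pi^n$ (hence $\pi$) to be a unit in $T_{P_1}$, contradicting $\pi \in P_1$.

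The main obstacle is purely bookkeeping: ensuring that fractional arithmetic makes sense when $T$ need not be a domain. This is handled once and for all by N-subring condition (2), which guarantees that every nonzero element of $R$ is regular in $T$, so $K$ embeds in the total ring of fractions of $T$ and the localizations $T_P, T_{P_1}$ behave as expected.
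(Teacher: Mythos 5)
Your proof is correct and takes essentially the same route as the paper's: write the element as a coprime fraction $a/b$, locate a height one prime $P_1\subseteq P$ minimal over an irreducible factor $\pi$ of $b$, use N-subring condition (3) to pin down $P_1\cap R=\pi R$, and derive a contradiction by passing to $T_{P_1}$. The only difference is the cosmetic finish (showing $\pi$ would be a unit in $T_{P_1}$, rather than the paper's conclusion that $a\in \pi R$ contradicts coprimality), so no further comment is needed.
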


\begin{proof}
Certainly, as $R-Q\subseteq T-P$, we have $R_Q\subseteq T_P\cap K$.
If the reverse containment fails, there exist $a,b\in R$ with $a/b\in T_P-R_Q$.
As $R$ is a UFD, we may express $a$ and $b$ as products of prime elements
and, canceling if necessary, we may assume $a,b$ have no common factors.
Further, we may delete any prime elements not contained in $Q$ as they will not affect
membership in either $R_Q$ or $T_P$.
Finally, let $p$ be any factor of $b$ --- one such must exist.
As $p\in Q$, $p\in P$, and so there exists a height one prime ideal $P_1$ of $T$
which is contained in $P$ and contains $p$.
Now $a/b\in T_P$ implies $a/p\in T_P\subseteq T_{P_1}$ which implies $a\in P_1\cap R=pR$.
But $p,a$ are relatively prime elements; this contradiction completes the proof.
\end{proof}

\begin{lemma}\label{union}
Let $(R_0,\Gamma_0)$ be an N-pair.
Let $\Omega$ be a well-ordered set with least element $0$ and no maximal element.
Suppose that for every $\alpha\in \Omega$, $|\{\beta\in\Omega |\beta <\alpha\}|<|\mathbf{R}|$.
Let $\gamma(\alpha)=\sup\{\beta\in\Omega | \beta<\alpha\}$.
Suppose $\{(R_{\alpha},\Gamma_{\alpha}) | \alpha\in\Omega\}$ is an ascending collection of pairs such that if $\gamma(\alpha)=\alpha$, then $R_\alpha=\bigcup_{\beta<\alpha}R_\beta$ and $\Gamma_\alpha=\bigcup_{\beta<\alpha}\Gamma_\beta$, while if $\gamma(\alpha)<\alpha$,
$(R_{\alpha},\Gamma_{\alpha})$ is an A-extension of $(R_{\gamma(\alpha)},\Gamma_{\gamma(\alpha)})$.
Then $(S,\Gamma)=(\bigcup R_{\alpha},\bigcup \Gamma_{\alpha})$ satisfies all conditions to be an N-pair except the cardinality condition.
In fact, $|S|\leq \sup (|R_0|,|\Omega|)$ and so $(S,\Gamma)$ is an N-pair if $|\Omega|<|\mathbf{R}|$.
\end{lemma}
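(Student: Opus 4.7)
My plan is to verify each N-pair and N-subring condition for $(S,\Gamma)$ by reducing the statement in question to the corresponding fact at some single $R_\alpha$, exploiting that every finite datum (element, polynomial relation, finitely generated ideal) in $S$ or $\Gamma$ is realized at some index in the tower. The cardinality bound follows by transfinite induction on $\alpha\in\Omega$: successor steps are A-extensions and preserve cardinality, while limit steps are unions of size less than $|\mathbf{R}|$ of equal-cardinality rings, yielding $|S|\leq\sup(|R_0|,|\Omega|)$. For basic structure, $S$ is a directed union of domains hence a domain, and any $x\in S\setminus\m$ lies in $R_\alpha\setminus\m$ for some $\alpha$ and is a unit there, showing $S$ is quasi-local with maximal ideal $\m\cap S$.

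For the N-subring axioms: axiom (2) is immediate, as $Q\cap S=\bigcup_\alpha(Q\cap R_\alpha)=0$ for any $Q\in\Ass T$. Axiom (3) follows by a witness argument: a strict chain $0\subsetneq Q_1\subsetneq Q_2\subseteq P\cap S$ would, after picking $y\in Q_1\setminus\{0\}$ and $x\in Q_2\setminus Q_1$ and locating them in a common $R_\alpha$, produce a length-two chain in $P\cap R_\alpha$, contradicting (3) at level $\alpha$. The UFD requirement on $S$ is the principal obstacle, and I expect this to be the hardest step. The plan is to invoke Kaplansky's criterion: given a nonzero prime $Q\subseteq S$, pick $0\ne s\in Q\cap R_\alpha$, factor $s$ in the UFD $R_\alpha$, and extract a prime factor $p\in Q\cap R_\alpha$. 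The task is then to show $p$ remains a prime element in every $R_\beta\supseteq R_\alpha$, so that $pS=\bigcup_{\beta\geq\alpha}pR_\beta$ is a prime ideal contained in $Q$. Here axiom (3) is the crucial tool: for each minimal prime $P$ of $pT$ in $T$, the contraction $P\cap R_\beta$ is height at most one, hence height one and principal in the UFD $R_\beta$, and its further contraction to $R_\alpha$ equals $pR_\alpha$ by primality of $p$ combined with axiom (3) at level $\alpha$. Tracking these contractions against the UFD structure of each level rules out both splitting (multiple minimal primes over $pR_\beta$) and ramification.

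For the N-pair conditions: (1)--(3) hold because they concern only $T$ and elements of $R_0$. Condition (4) is the estimate $|V_\lambda|\leq|R_\alpha|\leq|S|$ at the level $\alpha$ where $\lambda$ enters $\Gamma_\alpha$. Condition (5) follows by finite-coefficient inspection: any nontrivial algebraic relation over $S/(P_\lambda\cap S)$ uses finitely many coefficients, all lifting to $R_\beta/(P_\lambda\cap R_\beta)$ for some $\beta$ with $\lambda\in\Gamma_\beta$, contradicting algebraic independence there. For condition (6), the inclusion $R_\beta\setminus(P_\lambda\cap R_\beta)\subseteq S\setminus(P_\lambda\cap S)$ induces a local homomorphism $R_{\beta,P_\lambda\cap R_\beta}\to S_{P_\lambda\cap S}$, and applying it to the generating relation for $Q_\lambda R_{\beta,P_\lambda\cap R_\beta}$ provided at level $\beta$ reproduces the same relation in $S_{P_\lambda\cap S}$ with the same $\alpha_\lambda,\beta_\lambda,t_\lambda$. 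The main technical hurdle is the UFD verification, where controlling both the splitting and ramification behavior of primes across general A-extensions, using only the N-subring height-one contraction axiom together with the UFD property at each level, carries all of the real content.
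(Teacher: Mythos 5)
Your overall architecture (reduce every condition to finite data living in some $R_\alpha$, transfinite induction for the cardinality bound) matches the paper's, and your treatments of conditions (2), (3) of N-subring and (4), (5) of N-pair are fine. But the step you yourself flag as the crux --- the UFD property of $S$ --- contains a genuine gap. Your plan is to take $0\neq s\in Q\cap R_\alpha$, extract a prime factor $p$ of $s$ in $R_\alpha$, and show that $p$ \emph{remains prime in every} $R_\beta\supseteq R_\alpha$. That claim does not follow from the axioms and is false in general: an A-extension is merely an inclusion of N-pairs of equal cardinality, and nothing in the N-subring conditions prevents a prime element of $R_\alpha$ from splitting or ramifying in a larger N-subring. (For instance, a localized polynomial ring over $\mathbf{Q}(c_1,c_2)$ in $z,w$ containing the irreducible quadratic $z^2+c_1zw+c_2w^2$ sits inside a localized polynomial ring over a larger coefficient field in which that quadratic factors as $(z+\pi w)(z+ew)$; both rings satisfy the N-subring conditions.) The contraction facts you cite show only that every prime factor of $p$ in $R_\beta$ generates $P\cap R_\beta$ for some $P\in\Ass(T/pT)$, and that each such $P$ contracts to $pR_\alpha$ in $R_\alpha$; two distinct primes of $R_\beta$ contracting to the same prime of $R_\alpha$, or $p$ becoming a unit times a square, is perfectly consistent with this, so ``tracking these contractions'' does not rule out splitting or ramification. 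What does work (and is essentially the content of \cite[Lemma 6]{UFD}, which the paper simply cites for this point) is a stabilization argument for each fixed nonzero nonunit $a$: the prime factors of $a$ in any $R_\beta$ are generators of the finitely many contractions $P\cap R_\beta$, $P\in\Ass(T/aT)$; the exponent of $P\cap R_\beta$ in the factorization is nondecreasing in $\beta$ and bounded (since $a$ lies in only boundedly many symbolic powers of $P$ in the Noetherian domain $T$); hence the factorization stabilizes at some level, the stabilized factors generate $P\cap S$ and so are prime in $S$, and every element of $S$ is a product of primes. Your level-$\alpha$ persistence strategy cannot be repaired without passing to such a sufficiently large level.

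A smaller point: for N-pair condition (6) you only transport the generating relation from the level $\beta$ at which $\lambda$ enters $\Gamma$. That shows $(P_\lambda\cap R_\beta)S_{Q_\lambda}=(x+\alpha_\lambda t_\lambda,y+\beta_\lambda t_\lambda,t_\lambda)S_{Q_\lambda}$, but the assertion concerns $(P_\lambda\cap S)S_{Q_\lambda}$, which a priori is larger. The fix is easy --- any element of $P_\lambda\cap S$ lies in some later $R_{\beta'}$ with $\lambda\in\Gamma_{\beta'}$, and one applies condition (6) there (the paper instead routes this through Lemma~\ref{intersection}) --- but as written the argument is incomplete; note also that using later, possibly limit, levels is what forces the paper to organize the whole proof as a transfinite induction showing one has N-pairs all along the way.
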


\begin{proof}
We replace $\Omega$ by $\Omega^{\prime}=\Omega\cup \{\zeta\}$ with $\zeta>\alpha$ for all $\alpha\in\Omega$.
Let $(R_{\zeta},\Gamma_{\zeta})=(S,\Gamma)$.
The lemma then follows if we can show that for each $\alpha\in\Omega^{\prime}$:
$(R_{\alpha},\Gamma_{\alpha})$ satisfies all conditions to be an N-pair with the cardinality condition replaced by $|R_{\alpha}|\leq \sup(|R_0|,|\{\beta\in\Omega |\beta <\alpha\}|)$.
We do this by transfinite induction, the case $\alpha=0$ being trivial.

First note that since our proof will show that we have N-pairs all along the way, A-extensions
are permissible.
Now assume the induction hypothesis holds for all $\beta<\alpha$.
If $\gamma(\alpha)\neq\alpha$, it holds immediately for $(R_{\alpha},\Gamma_{\alpha})$
by the definition of A-extension.
Therefore assume $\gamma(\alpha)=\alpha$.
$R_{\alpha}$ is an N-subring of $T$ by \cite[Lemma 6]{UFD}).
(Alternatively, the reader may check the straightforward details directly.)
Certainly $\Gamma_{\alpha}$ has the form $\{(P_{\lambda},V_{\lambda}) | \lambda\in \Gamma\}$
and so it only remains to check the six conditions.
Conditions (1)-(4) are trivial.
For condition (5), note that if the images of the elements of $V_{\lambda}$ are not algebraically independent over
$R_{\alpha}/P_{\lambda}\cap R_{\alpha}$, there is a relation which necessarily involves
only finitely many elements of $R_{\alpha}$.
Hence there exists $\beta<\alpha$ such that each of these elements is contained in $R_{\beta}$
and $(P_{\lambda},V_{\lambda})\in \Gamma_{\beta}$.
This would contradict the fact that the images of the elements of $V_{\lambda}$ are algebraically independent over
$R_{\beta}/P_{\lambda}\cap R_{\beta}$ and so condition (5) holds.
Likewise, for condition (6), if $a\in Q_{\lambda}(R_{\alpha})_{Q_{\lambda}}$ we can find $\beta<\alpha$ such that
$a$ is in the quotient field of $R_{\beta}$ and $(P_{\lambda},V_{\lambda})\in \Gamma_{\beta}$.
Let $Q_{\lambda\beta}=P_{\lambda}\cap R_{\beta}$.
By lemma~\ref{intersection}, $(R_{\alpha})_{Q_{\lambda}}$ and $(R_{\beta})_{Q_{\lambda\beta}}$ are the intersection of $T_{P_{\lambda}}$ with their respective quotient fields and so
$a\in Q_{\lambda\beta}(R_{\beta})_{Q_{\lambda\beta}}$.
Thus we have
$a\in (x+\alpha_{\lambda}t_{\lambda}, y+\beta_{\lambda}t_{\lambda},t_{\lambda})(R_{\beta})_{Q_{\lambda\beta}}\subset (x+\alpha_{\lambda}t_{\lambda}, y+\beta_{\lambda}t_{\lambda},t_{\lambda})(R_{\alpha})_{Q_{\lambda}}$.
Therefore condition (6) holds as well.
\end{proof}

\begin{lemma}\label {u}
Let $R$ be an N-subring of $T$.
If $\omega,h\in T$ are such that $\overline{\omega}+\overline{z}\overline{h}$ is transcendental over
$R$ as an element of $T/(x,y)T$,
then there exists a subset $\Psi\subset\mathbf{R}$ such that
$|\Psi|\leq|R|$ and whenever $u\in\mathbf{R}-\Psi$, $\overline{\omega}+\overline{z}\overline{h}$ is transcendental over
$R/(x,y,z+uw)T\cap R$ as an element of $T/(x,y,z+uw)T$.

\end{lemma}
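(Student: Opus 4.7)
The plan is to identify $T/(x,y)T$ with the power series ring $\mathbf{R}[[z,w]]$; under this identification (abusing notation slightly) the element $\overline{\omega}+\overline{z}\overline{h}$ becomes an element $\omega+zh\in\mathbf{R}[[z,w]]$. For each $u\in\mathbf{R}$ there is a further quotient isomorphism $T/(x,y,z+uw)T\cong\mathbf{R}[[z,w]]/(z+uw)$. Writing $Q_u=(x,y,z+uw)T\cap R$, the image of $\overline{\omega}+\overline{z}\overline{h}$ is algebraic over $R/Q_u$ in $T/(x,y,z+uw)T$ precisely when there exists a polynomial $P(X)\in R[X]$ with at least one coefficient outside $Q_u$ (hence itself nonzero as an element of $R[X]$) such that $P(\omega+zh)\in(z+uw)\mathbf{R}[[z,w]]$.

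Next, I would exploit the hypothesis. Since $\omega+zh$ is transcendental over $R$ inside $\mathbf{R}[[z,w]]$, the element $P(\omega+zh)$ is a nonzero element of $\mathbf{R}[[z,w]]$ for every nonzero $P\in R[X]$. Now $\mathbf{R}[[z,w]]$ is a Noetherian UFD, and $\{z+uw\}_{u\in\mathbf{R}}$ is a family of pairwise non-associate prime elements (each is prime because $\mathbf{R}[[z,w]]/(z+uw)\cong\mathbf{R}[[w]]$ is a domain). Hence any fixed nonzero element of $\mathbf{R}[[z,w]]$ is divisible by only finitely many of the $z+uw$. For each nonzero $P\in R[X]$ set
\[
U_P=\{u\in\mathbf{R}:P(\omega+zh)\in(z+uw)\mathbf{R}[[z,w]]\},
\]
which is finite, and put $\Psi=\bigcup_{0\ne P\in R[X]}U_P$. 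Because $R$ has characteristic zero it is infinite, so $|R[X]|=|R|$ and therefore $|\Psi|\leq|R|\cdot\aleph_0=|R|$.

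Finally, for the conclusion I would argue by contradiction. Let $u\in\mathbf{R}-\Psi$ and suppose $\overline{\omega}+\overline{z}\overline{h}$ were algebraic over $R/Q_u$ in $T/(x,y,z+uw)T$. Lift any nontrivial algebraic relation to a polynomial $P\in R[X]$: by the opening observation $P$ is nonzero in $R[X]$ and satisfies $P(\omega+zh)\in(x,y,z+uw)T$. Reducing modulo $(x,y)T$ then gives $P(\omega+zh)\in(z+uw)\mathbf{R}[[z,w]]$, i.e.\ $u\in U_P\subseteq\Psi$, contradicting the choice of $u$.

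There is no deep obstacle here; the essential content is the finiteness of the set of principal height-one primes of $\mathbf{R}[[z,w]]$ containing a given nonzero element, combined with the cardinality bound $|R[X]|=|R|$. The one piece of bookkeeping to keep honest is that the lift of an algebraic relation over $R/Q_u$ yields a polynomial in $R[X]$ that is genuinely nonzero (not merely nonzero modulo $Q_u$), which is automatic since a coefficient outside $Q_u$ is in particular nonzero.
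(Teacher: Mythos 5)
Your overall strategy is the same as the paper's: pass to $T/(x,y)T\cong\mathbf{R}[[z,w]]$, use the transcendence hypothesis to turn each potential algebraic relation into a nonzero element of that ring, note that a fixed nonzero element can lie in $(x,y,z+uw)T$ for only finitely many $u$ (you phrase this via the pairwise non-associate primes $z+uw$ in the UFD $\mathbf{R}[[z,w]]$; the paper phrases it as $(x,y,z+uw)T$ being minimal over $(x,y,\delta)T$), and then sum over the $|R|$-many polynomials, exactly as the paper sums over finite tuples of coefficients.

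There is, however, one step that the stated hypotheses do not support: the claim that $P(\omega+zh)$ is nonzero in $\mathbf{R}[[z,w]]$ for \emph{every} nonzero $P\in R[X]$. The hypothesis is transcendence over the image of $R$ in $T/(x,y)T$, i.e.\ over $R/(x,y)T\cap R$, and it says nothing about polynomials all of whose coefficients lie in $(x,y)T\cap R$. The lemma assumes only that $R$ is an N-subring, and the N-subring axioms merely force $\Ht((x,y)T\cap R)\le 1$; the vanishing $(x,y)T\cap R=(0)$ is condition (2) of an N-\emph{pair}, which is not assumed here. If $(x,y)T\cap R\neq(0)$, then for a nonzero constant polynomial $P=c$ with $c\in (x,y)T\cap R$ you get $U_P=\mathbf{R}$, so your $\Psi$ is all of $\mathbf{R}$ and the cardinality bound $|\Psi|\le |R|$ fails as proved. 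The repair is immediate and is precisely the care the paper takes when it requires the coefficients to be ``not all contained in $(x,y,z+uw)T$'': since $(x,y)T\cap R\subseteq Q_u$, any relation witnessing algebraicity over $R/Q_u$ lifts to a $P$ with some coefficient outside $(x,y)T\cap R$, i.e.\ with nonzero image in $\bigl(R/(x,y)T\cap R\bigr)[X]$; so define $\Psi$ as the union of the $U_P$ over only such $P$, for which your finiteness argument applies verbatim. (In every application in the paper $R$ sits in an N-pair, where $(x,y)T\cap R=(0)$ and your argument works as written, but the lemma as stated does not assume this.)
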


\begin{proof}
We choose $\Psi$ to be the set of real numbers that make the element algebraic and all that remains is to bound the cardinality of $\Psi$.
Now $u\in\Psi$ if $r_n(\omega+zh)^n+\cdots+r_0\in (x,y,z+uw)T$ for some $r_n,\ldots,r_0\in R$,
not all of which are contained in $(x,y,z+uw)T$.
Since the number of finite ordered subsets of $R$ is equal to $|R|$, we can restrict our attention to a single ordered subset, that is, we can fix $r_n,\ldots,r_0$ and ask when
$\delta=r_n(\omega+zh)^n+\cdots+r_0\in (x,y,z+uw)T$.
As $\overline{\omega}+\overline{z}\overline{h}$ is transcendental over $R$ as an element of $T/(x,y)T$,
$\delta\notin(x,y)T$.
Therefore $(x,y,z+uw)T$ must be minimal over $(x,y,\delta)T$, something that can be true for at most finitely many  values of $u$.
This completes the proof.
\end{proof}

\begin{lemma}\label{F}
Let  $(R,\Gamma)$ be an N-pair.
If $\omega,h\in T$ are such that $\overline{\omega}+\overline{z}\overline{h}$ is transcendental over
$R$ as an element of $T/(x,y)T$, then there exists $u\in \mathbf{ R}$ and a ring $S=R[u]_{\m\cap R[u]}$, such that $(x,y,z+uw)T\cap R=(0)$, $(S,\Gamma)$ is an A-extension of $(R,\Gamma)$,
and $\overline{\omega}+\overline{z}\overline{h}$ is transcendental over $R$ as an element of $T/(x,y,z+uw)T$.
\end{lemma}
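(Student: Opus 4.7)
The plan is to select $u\in\mathbf{R}$ by avoiding two small subsets of $\mathbf{R}$ and then invoke Lemma~\ref{3} to inherit the N-pair structure of $S$. The first subset is the set $\Psi$ from Lemma~\ref{u}, which delivers the transcendence conclusion; the second is $\Psi'=\{u\in\mathbf{R}:(x,y,z+uw)T\cap R\neq(0)\}$, which delivers the intersection condition. I expect the nontrivial step to be the cardinality bound $|\Psi'|\leq|R|$; once that is in hand, the rest is a routine application of Lemma~\ref{3} followed by Lemma~\ref{describe}.

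To bound $|\Psi'|$, I would fix a nonzero $r\in R$. Condition (2) in the definition of N-pair forces $r\notin(x,y)T$, so the image of $r$ in $T/(x,y)T\cong\mathbf{R}[[z,w]]$ is nonzero. In this UFD the elements $\{z+uw:u\in\mathbf{R}\}$ form a family of pairwise non-associate prime elements, so only finitely many of them can divide the image of $r$. Hence $r\in(x,y,z+uw)T$ for only finitely many $u\in\mathbf{R}$, and summing over nonzero $r\in R$ yields $|\Psi'|\leq|R|<|\mathbf{R}|$.

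Setting $D'=\Psi\cup\Psi'$, I would apply Lemma~\ref{3} with $v=0$, principal ideal $I=1\cdot T$, $\Lambda=\Gamma\cup\{((x,y)T,\emptyset)\}$, and $H=\emptyset$. The hypotheses of Lemma~\ref{3} are immediate, and its ``moreover'' clause produces $d=u\in\mathbf{R}$ with $u\notin r+\m$ for every $r\in D'$; since $\mathbf{R}\cap\m=(0)$, this is the same as $u\notin D'$. Lemma~\ref{describe} applied with $H=\emptyset$ identifies the resulting N-subring as $S=R[u]_{\m\cap R[u]}$, and because $\Lambda$ contains $((x,y)T,\emptyset)$, the unlabeled lemma immediately following the definition of A-extension guarantees that $(S,\Gamma)$ is an A-extension of $(R,\Gamma)$. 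The remaining two conclusions are then immediate: $u\notin\Psi'$ gives $(x,y,z+uw)T\cap R=(0)$, and $u\notin\Psi$ gives the transcendence of $\overline\omega+\overline z\overline h$ over $R$ in $T/(x,y,z+uw)T$.
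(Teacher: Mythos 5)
Your proposal is correct and follows essentially the same route as the paper: define $\Psi$ via Lemma~\ref{u} and $\Psi'=\{u:(x,y,z+uw)T\cap R\neq(0)\}$ (the paper's $\Psi_1,\Psi_2$, with the same cardinality bound coming from $(x,y)T\cap R=(0)$), then apply Lemma~\ref{3} with $v=0$, the unit ideal, $H=\emptyset$ and $\Lambda=\Gamma\cup\{((x,y)T,\emptyset)\}$, using the ``moreover'' clause to pick $u\in\mathbf{R}$ avoiding $\Psi\cup\Psi'$, and Lemma~\ref{describe} to identify $S=R[u]_{\m\cap R[u]}$. The only difference is that you spell out the finiteness argument for $|\Psi'|\leq|R|$ (via the non-associate primes $z+uw$ in $\mathbf{R}[[z,w]]$), which the paper states more tersely.
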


\begin{proof}
First we use Lemma~\ref{u} to find a set $\Psi_1$ so that $u\notin\Psi_1$ will give
$\overline{\omega}+\overline{z}\overline{h}$ is transcendental over $R/(x,y,z+uw)T\cap R$ as an element of $T/(x,y,z+uw)T$.
Let $\Psi_2=\{u\in\mathbf{R} | (x,y,z+uw)T\cap R\neq (0)\}$.
As $(x,y)T\cap R=(0)$, no element of $R$ can be contained in infinitely many prime ideals
of the form $(x,y,z+uw)T$ and so $|\Psi_2|\leq |R|$.
Let $\Lambda=\Gamma\cup\{((x,y)T,\emptyset)\}$.
Now we use Lemma~\ref{3} with $v=0$, $I=R$, and $H=\emptyset$ to find the desired $S$.
Since $t=1$, we can choose $u=d\in \mathbf{R}-(\Psi_1\cup\Psi_2)$.
By Lemma~\ref{describe}, $S$ has the described form.

\end{proof}

\begin{lemma}\label {elem}
Let $t=z+uw$ for some $u\in T$ and let $P=(x,y,t)T=(r,s,t)T$ for some $r,s\in T$.
Suppose $F$ is a subfield of the quotient field of $T$, $\xi\in T$, and $E$ is the intersection of $T_P$ with the quotient field of $F[r,s,t,\xi]$.
Further assume $s^2\in F[r,t,\xi]+sF[r,t,\xi]$, $r^2+s^2\in tT$,
and $E=L_{P\cap L}$ where $L=F[r,t,\xi,s,t^{-1}]\cap T[F]$.
If there exists $\sigma_1,\sigma_2,\sigma_3\in T$ with $\sigma_1,\sigma_2$ algebraically independent over $F$ as elements of $T/P$ such that $\xi=\sigma_1r+\sigma_2s+\sigma_3t$, then
$E= F[r,s,t,\xi]_{(r,s,t,\xi)}$.
\end{lemma}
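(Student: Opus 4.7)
Set $A = F[r,s,t,\xi]$ and $\m_A = (r,s,t,\xi)A$. My plan is to establish the claimed equality via two inclusions.

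For the easy inclusion $A_{\m_A} \subseteq E$, I would first verify that $\m_A = P \cap A$ is a maximal ideal of $A$ with residue field $F$. Since $1 \notin P$, the map $F \hookrightarrow T \to T/P$ is injective; and since $r, s, t \in P$ along with $\xi = \sigma_1 r + \sigma_2 s + \sigma_3 t \in P$, the composite $A \to T/P$ factors through $A/\m_A$ and identifies this quotient with $F$ inside $T/P$. Hence $A_{\m_A}$ embeds into $T_P$, and because $A_{\m_A}$ is contained in the quotient field of $A$, it lies inside $E$.

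For the reverse inclusion $E \subseteq A_{\m_A}$, I would use the assumed description $E = L_{P \cap L}$ with $L = A[t^{-1}] \cap T[F]$. Every element of $L$ has the form $\lambda/t^n$ with $\lambda \in A$ subject to $\lambda \in t^n T[F]$. The argument reduces to showing that every such $\lambda/t^n$ already lies in $A_{\m_A}$: once that is known, the same analysis applied to denominators shows that any element of $L \setminus (P \cap L)$ is a unit of $A_{\m_A}$, and so $L_{P \cap L} \subseteq A_{\m_A}$.

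The structural heart of the proof is therefore: if $\lambda \in A$ and $\lambda \in t^n T[F]$, then $\lambda/t^n \in A_{\m_A}$. Here I would exploit the relation $s^2 \in F[r,t,\xi] + sF[r,t,\xi]$ to write $\lambda = \lambda_0 + \lambda_1 s$ uniquely with $\lambda_0, \lambda_1 \in F[r,t,\xi]$. Together with the congruence $r^2 + s^2 \in tT$, the divisibility $\lambda \in t^n T[F]$ translates into a descending chain of divisibility constraints on $\lambda_0$ and $\lambda_1$ modulo successive powers of $t$. The critical input is the transcendence hypothesis: with $\xi = \sigma_1 r + \sigma_2 s + \sigma_3 t$ and $\sigma_1, \sigma_2$ algebraically independent over $F$ in $T/P$, any spurious divisibility by $t$ in $T[F]$ would recover an algebraic relation between $\sigma_1, \sigma_2$ and $F$ modulo $P$, contradicting the hypothesis. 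An induction on $n$ then yields the desired representation.

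The main obstacle will be precisely this inductive step: tracking through the interplay of the quadratic relation for $s$ over $F[r,t,\xi]$, the identity $r^2 + s^2 \equiv 0 \pmod t$, and the transcendence of $\sigma_1, \sigma_2$ modulo $P$, in order to descend divisibility from $T[F]$ to $A_{\m_A}$. Once that is achieved, the two inclusions combine to give $E = F[r,s,t,\xi]_{(r,s,t,\xi)}$.
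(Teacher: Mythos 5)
Your overall frame matches the paper's: both inclusions are set up the same way, and your ``structural heart'' is exactly the paper's key claim, which in the paper takes the sharper form $F[r,s,t,\xi]\cap tT[F]=tF[r,s,t,\xi]$ (from which $L=F[r,s,t,\xi]$ and the lemma follow at once). The surrounding bookkeeping in your proposal — that $\m_A=P\cap A$ has residue field $F$, that elements of $L$ are $\lambda/t^n$ with $\lambda\in A\cap t^nT[F]$, and that elements of $L\setminus(P\cap L)$ become units in $A_{\m_A}$ — is fine.

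The gap is that you never prove the central claim; you describe it and then label it ``the main obstacle,'' and the one-sentence mechanism you offer (``any spurious divisibility by $t$ in $T[F]$ would recover an algebraic relation between $\sigma_1,\sigma_2$ and $F$ modulo $P$'') is not enough to carry it. After writing $\lambda=\lambda_0+\lambda_1 s$ and reducing modulo $t$, one decomposes into homogeneous components in $r,s,\xi$ and substitutes $\xi\equiv\sigma_1 r+\sigma_2 s \pmod{tT}$ and $s^2\equiv -r^2$. Algebraic independence of $\sigma_1,\sigma_2$ then only kills the part of a homogeneous component \emph{not} divisible by $\xi$: writing $g_n=f_1r^n+f_2sr^{n-1}+\xi h$, the reduction produces coefficients $e_1,e_2\in F[\sigma_1,\sigma_2]$ with constant terms $f_1,f_2$ lying in $PT[F]$, forcing $f_1=f_2=0$ — but this leaves the remainder $\xi h$ untouched, and your sketch says nothing about it. The paper handles it by a separate maneuver: multiply by $\sigma_1 r-\sigma_2 s$, use $\xi(\sigma_1 r-\sigma_2 s)\equiv(\sigma_1^2+\sigma_2^2)r^2 \pmod{tT}$ with $\sigma_1^2+\sigma_2^2\notin P$, and then pass through the primary ideal $(r^{n+2},r^{n+1}s,t)T$ and the colon $((r^{n+2},r^{n+1}s,t)T:r^2)=(r^n,r^{n-1}s,t)T$ to drop the degree of $h$, closing an induction on the degree of the lowest homogeneous component. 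Without this degree-descent argument (or a substitute for it), the inductive step you defer is precisely the content of the lemma, so as written the proposal is a correct reduction but not a proof.
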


\begin{proof}
We first show that $F[r,s,t,\xi]\cap tT[F]=tF[r,s,t,\xi]$.
As $F[r,s,t,\xi]\subseteq T[F]$, one containment is clear.
For the reverse, let $g\in F[r,s,t,\xi]\cap tT[F]$.
As $s^2\in F[r,t,\xi]+sF[r,t,\xi]$, we may assume $g\in F[r,t,\xi]+sF[r,t,\xi]$ and since the terms involving $t$
are certainly in $tF[r,s,t,\xi]$, we may assume $g\in F[r,\xi]+sF[r,\xi]$.
In this case, we will show that $g=0$.
If $g\neq 0$, we can write $g=g_n+g_{n+1}+\cdots+g_m$ as a sum of homogeneous polynomials in $r,s,\xi$ with $g_n$ the nonzero homogeneous summand of lowest degree ($n$).
We have $g_n=-(g_{n+1}+\cdots+g_m)+g\in P^{n+1}T[F]+tT[F]$ and we will see by induction that no nonzero homogeneous polynomial of degree $n$ is in $P^{n+1}T[F]+tT[F]$.
This is obvious for $n=0$ as in that case $g_n\in F$.
For $n>0$, we can write $g_n(r,s,\xi)=f_1r^n+f_2sr^{n-1}+\xi h(r,s,\xi)$ where $f_1,f_2\in F$ and $h$ is a homogeneous polynomial of degree $n-1$.
As $\xi-(\sigma_1r+\sigma_2s)\in tT$.
we can replace $\xi$ by $\sigma_1r+\sigma_2s$, giving
$g_n(r,s,\sigma_1r+\sigma_2s)=f_1r^n+f_2sr^{n-1}+(\sigma_1r+\sigma_2s) h(r,s,\sigma_1r+\sigma_2s)\in P^{n+1}T[F]+tT[F]$.
Using the relation $r^2+s^2\in tT$, $g_n\equiv e_1r^n+e_2r^{n-1}s$ modulo $tT$ and necessarily
$e_1,e_2\in PT[F]$.
However, $e_1,e_2$ are polynomials over $F$ in $\sigma_1,\sigma_2$ with respective constant terms $f_1,f_2$.
As $\sigma_1,\sigma_2$ are algebraically independent over $F$, $e_1,e_2$ must both be identically zero and so $f_1=f_2=0$.
Therefore $g_n=h\xi$.
Next we take advantage of the fact that, modulo $tT$,
$\xi(\sigma_1r-\sigma_2s)\equiv \sigma_1^2r^2-\sigma_2^2s^2\equiv (\sigma_1^2+\sigma_2^2)r^2$ and $\sigma_1^2+\sigma_2^2\notin P$.
Now $g_n(\sigma_1r-\sigma_2s)\in (P^{n+1}+tT)(r,s)T[F]\subset(r^{n+2},r^{n+1}s,t)T[F]$.
So $h\xi(\sigma_1r-\sigma_2s)\equiv h(\sigma_1^2+\sigma_2^2)r^2\in (r^{n+2},r^{n+1}s,t)T[F]$.
Finally, as $(r^{n+2},r^{n+1}s,t)T$ is a primary ideal,
we have $h\in ((r^{n+2},r^{n+1}s,t)T:_{T[F]}r^2)=(r^n,r^{n-1}s,t)T[F]$ and by the induction assumption, $h=0$.
This contradiction shows that there can be no nonzero $g_n$ and so $F[r,s,t,\xi]\cap tT[F]= tF[r,s,t,\xi]$.

It follows that $L=F[r,s,t,\xi]$.
The result is now clear.
\end{proof}

\begin{lemma}\label{I}
Let $t=z+uw$ for some $u\in T$ and let $P=(x,y,t)T$
Let $F$ be a subfield of the quotient field of $T$ and suppose for $r,s,\xi_1\in T$ that $D=F[r,s,t,\xi_1]_{(r,s,t,\xi_1)}$ is the intersection of $T_P$ with the quotient field of $F[r,s,t,\xi_1]$.
Assume $P=(r,s,t)T$ and $r^2+s^2\in tT$.
Further assume
\begin{enumerate}
   \item  $s^2\in F[r,t]+\xi_1 F[r,t]+sF[r,t]$.
     \item $\delta_1,\delta_2,\tau,\sigma_1,\sigma_2\in T$ are algebraically independent over $F$ as elements of $T/P$.
  \item $\xi_1=\delta_1r+\delta_2s+\delta_3t$ with $\delta_3\in T$.
  \item $\delta_3=f(\delta_1,\delta_2)$ where $f(X_1,X_2)\in (F[t]\cap T)[X_1,X_2]$ is a polynomial in two indeterminates.
\end{enumerate}
Let $d_1=\delta_1+\sigma_1t$, $d_2=\delta_2+\sigma_2t$, and $d_3=\delta_3-\sigma_1r-\sigma_2s$.
Let $\xi_2=f(d_1,d_2)-d_3$.
Then if $E$ denotes  the intersection of $T_P$ with the quotient field of $D[d_1,d_2]$
and $E=L_{P\cap L}$ where $L=F[r,t,\xi_1,s,d_1,d_2,t^{-1}]\cap T[F[d_1,d_2]]$,
we have $E=F(d_1,d_2)[r,t,\xi_2,s]_{(r,t,\xi_2,s)}$.
Moreover we have
\begin{enumerate}
  \item $s^2\in F(d_1,d_2)[r,t]+\xi_2 F(d_1,d_2)[r,t]+sF(d_1,d_2)[r,t]$.
    \item $\sigma_1,\sigma_2,\tau\in T$ are algebraically independent over $F(d_1,d_2)$ as elements of $T/P$.
  \item $\xi_2=\sigma_1r+\sigma_2s+\sigma_3t$ with $\sigma_3\in T$.
  \item $\sigma_3=g(\sigma_1,\sigma_2)$ where $g(X_1,X_2)\in (F(d_1,d_2)[t]\cap T)[X_1,X_2]$ is a polynomial in two indeterminates.
\end{enumerate}

\end{lemma}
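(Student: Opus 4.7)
The plan is to reduce the lemma to an application of Lemma~\ref{elem} with $F$ replaced by $F(d_1,d_2)$ and $\xi$ replaced by $\xi_2$. The first step is to relate $\xi_1$ and $\xi_2$ explicitly. Unwinding the definitions and using $\sigma_i t = d_i - \delta_i$ together with $\xi_1 = \delta_1 r + \delta_2 s + \delta_3 t$, a direct computation gives
\[
t\xi_2 = tf(d_1,d_2) + d_1 r + d_2 s - \xi_1,
\]
so that $\xi_1 = tf(d_1,d_2) + d_1 r + d_2 s - t\xi_2$. In particular $\xi_1$ lies in $F[d_1,d_2,r,s,t,\xi_2]$ and $\xi_2$ lies in $F[d_1,d_2,r,s,t,\xi_1][t^{-1}]$, so the quotient fields of $F[d_1,d_2,r,s,t,\xi_1]$ and of $F(d_1,d_2)[r,s,t,\xi_2]$ coincide; consequently $E$ is also the intersection of $T_P$ with this latter quotient field.

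Next I would verify the four \emph{moreover} conditions. For (2), reducing modulo $P$ and using that $t\in P$ gives $d_i\equiv \delta_i$, so the algebraic independence of $\delta_1,\delta_2,\tau,\sigma_1,\sigma_2$ over $F$ in $T/P$ passes to the algebraic independence of $\sigma_1,\sigma_2,\tau$ over $F(d_1,d_2)$ in $T/P$. For (3), the displayed identity shows $\xi_2\equiv \sigma_1 r + \sigma_2 s \pmod{tT}$, so $\sigma_3 := (\xi_2-\sigma_1 r-\sigma_2 s)/t$ is an element of $T$. For (1), substitute the formula for $\xi_1$ into the given relation $s^2 \in F[r,t]+\xi_1 F[r,t]+sF[r,t]$ and collect terms; the result lies in $F(d_1,d_2)[r,t]+\xi_2 F(d_1,d_2)[r,t]+sF(d_1,d_2)[r,t]$.

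The most delicate condition is (4). Substituting $\delta_i = d_i - \sigma_i t$, Taylor-expand
\[
f(\delta_1,\delta_2) = f(d_1-\sigma_1 t, d_2 - \sigma_2 t)
\]
in two variables, where $f \in (F[t]\cap T)[X_1,X_2]$. Every term of $f(d_1,d_2) - f(\delta_1,\delta_2)$ is manifestly divisible by $t$, so this difference equals $t\,g(\sigma_1,\sigma_2)$ where $g$ is a polynomial in $X_1,X_2$ whose coefficients are polynomial expressions in $d_1,d_2,t$ built from the coefficients of $f$; in particular those coefficients lie in $F[t,d_1,d_2]\cap T \subseteq F(d_1,d_2)[t] \cap T$. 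Combining with the identity $\xi_2 - \sigma_1 r - \sigma_2 s = f(d_1,d_2) - f(\delta_1,\delta_2)$ and dividing by the nonzerodivisor $t$ gives $\sigma_3 = g(\sigma_1,\sigma_2)$, establishing (4).

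To conclude, apply Lemma~\ref{elem} with $F$ replaced by $F(d_1,d_2)$, $\xi$ replaced by $\xi_2$, and correspondingly $L$ replaced by $L' := F(d_1,d_2)[r,t,\xi_2,s,t^{-1}] \cap T[F(d_1,d_2)]$. The identity $\xi_1 = tf(d_1,d_2) + d_1 r + d_2 s - t\xi_2$ shows $L \subseteq L'$, and since $L'_{P\cap L'}\subseteq E$ automatically, the chain $E = L_{P\cap L} \subseteq L'_{P\cap L'} \subseteq E$ forces $E = L'_{P\cap L'}$. All the remaining hypotheses of Lemma~\ref{elem} follow from the moreover conditions just verified, together with $r^2+s^2\in tT$ inherited from the current lemma. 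Lemma~\ref{elem} then yields $E = F(d_1,d_2)[r,s,t,\xi_2]_{(r,s,t,\xi_2)}$, finishing the proof. The main obstacle is the bookkeeping in condition (4): one must verify both that the Taylor remainder is genuinely divisible by $t$ in $T$ (not merely in an ambient field) and that the resulting polynomial $g$ has coefficients in $F(d_1,d_2)[t] \cap T$, rather than merely in the field extension $F(d_1,d_2)[t]$.
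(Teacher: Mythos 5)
Your proof is correct and follows essentially the same route as the paper's: use the identity $\xi_1=d_1r+d_2s+d_3t$ (so the relevant quotient fields coincide), verify the four transferred conditions via a Taylor expansion of $f$, and then invoke Lemma~\ref{elem} over $F(d_1,d_2)$ with $\xi_2$. The only point the paper makes more explicit is that $F(d_1,d_2)\subset E$ (from the algebraic independence of $d_1,d_2$ modulo $P$ over $F$, which you do establish), and this is exactly what justifies your ``automatic'' containment $L'_{P\cap L'}\subseteq E$.
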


\begin{proof}
As $\delta_1,\delta_2$ are algebraically independent over $F$ as elements of $T/P$
and we have $d_i\equiv \delta_i$ modulo $P$ for each $i$, it follows that
$d_1,d_2$ are algebraically independent over $F$ as elements of $T/P$ and so
$F(d_1,d_2)\subset E$.
As $\xi_1=d_1r+d_2s+d_3t$, $d_3$ is in the quotient field of $E$, and so $d_3\in E$.
It follows that $\xi_2\in E$.
By taking the Taylor expansion of $f(d_1,d_2)$ about $(\delta_1,\delta_2)$,
we see that $\xi_2=-d_3+f(d_1,d_2)=-\delta_3+\sigma_1r+\sigma_2s+(f(\delta_1,\delta_2)+t\sigma_3)$
where $\sigma_3\in (F[t]\cap T)[\sigma_1,\sigma_2,\delta_1,\delta_2]=\\(F[t]\cap T)[\sigma_1,\sigma_2,d_1,d_2]\subset
(F(d_1,d_2)[t]\cap T)[\sigma_1,\sigma_2]$.
Conclusions (2),(3),(4) are now clear.
Also, as $\xi_1=d_1r+d_2s+d_3t\in F[d_1,r]+sF[d_2]+\xi_2F[t]+F[d_1,d_2,t]$, we get
$s^2\in F[r,t]+\xi_1 F[r,t]+sF[r,t]\subset F(d_1,d_2)[r,t]+\xi_2 F(d_1,d_2)[r,t]+sF(d_1,d_2)[r,t]$.
So conclusion (1) also holds.
Finally, as $d_3=f(d_1,d_2)-\xi_2$ and $\xi_1=d_1r+d_2s+d_3t$, we observe that $E$,
being a localization of $L$, is also a localization of $F(d_1,d_2)[r,s,t,\xi_2,t^{-1}]\cap T[F(d_1,d_2)]$.
We now apply Lemma~\ref{elem} to complete the proof.
\end{proof}

\begin{lemma}\label {P}
Let  $(R,\Gamma)$ be an N-pair.
Suppose $((x,y,z)T,\{\omega\})\in\Gamma$
and also suppose $h\in T$ such that $\omega+\overline{z}\overline{h}$ is transcendental over
$R$ as an element of $T/(x,y)T$.
Then there exist elements $u,\alpha,\beta, \mu,\nu\in\mathbf{R}$ and $r,s,t,A,B,C,q\in T$, and
a ring $S$, $R\subseteq S\subseteq T$, such that the following nine conditions hold.

\begin{enumerate}
\item $t=z+uw$
\item $r=x+\alpha t$
\item $s=y+\beta t$
\item $A=t\mu+rq-2\alpha(1+tq)$
\item $ B=t\nu+sq-2\beta(1+tq)$
\item $C=-r\mu-s\nu+(\alpha^2+\beta^2)(1+tq)$
\item $R[u,r,s,t,A,B,C]\subset S$
\item $q=0$
\item $(S,\Gamma\cup ((x,y,t)T,\{\omega +zh\})$ is an A-extension of $(R,\Gamma)$
\end{enumerate}
Suppose instead that $R=\mathbf{Q}[z,w]_{(z,w)}$ and $\Gamma=\emptyset$. Here we may set $u=0$ and find such a ring $S$ and an element $\omega\in \mathbf{R}$ satisfying conditions (1)-(7) with the last two conditions replaced by

\begin{enumerate}[start=8]

\item $q=\omega$
  \item  $(S,((x,y,z)T,\{\omega\}))$ is an A-extension of $(R,\Gamma)$
\end{enumerate}
\end{lemma}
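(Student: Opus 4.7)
My plan is to construct $S$ in stages: start with Lemma~\ref{F} (in Case~1) or Lemma~\ref{avoidance} (in Case~2) to fix the initial real parameter, then iterate Lemma~\ref{3} to adjoin in succession $r$, $s$, $A$, $B$, and finally $C$, with suitable reals $\alpha,\beta,\mu,\nu\in\mathbf{R}$ (and $\omega\in\mathbf{R}$ in Case~2). Throughout one carries along, as a target for transcendence preservation, the yet-to-be-added prime $(x,y,t)T$ with distinguished element $\omega+zh$ (respectively $(x,y,z)T$ with $\omega$ in Case~2).

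For Case~1, apply Lemma~\ref{F} to obtain $u\in\mathbf{R}$ and $R_0=R[u]_{\m\cap R[u]}$ with $(x,y,t)T\cap R_0=(0)$ and $\overline{\omega}+\overline{z}\overline{h}$ transcendental over $R_0$ modulo $(x,y,t)T$, where $t=z+uw$. Then apply Lemma~\ref{3} to $R_0$ with $v=x$, $I=tT$, $H=\{tT\}$, $\Lambda=\Gamma\cup\{((x,y)T,\emptyset)\}$: the hypotheses hold since $t\notin(x,y)T$ and $t\notin P_\lambda$ (by the choice of $u$), and the moreover clause picks $d=t\alpha$ with $\alpha\in\mathbf{R}$, adjoining $r=x+\alpha t$. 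Although $(x,y,t)T$ cannot appear in $\Lambda$ (since $I\subseteq(x,y,t)T$), this is harmless: $r\equiv 0\pmod{(x,y,t)T}$, so the image in $T/(x,y,t)T$ is unchanged and transcendence is preserved. Repeat with $v=y$ to adjoin $s=y+\beta t$. To adjoin $A=t\mu-2\alpha$, apply Lemma~\ref{3} with $v=-2\alpha$, $I=tT$; here $A\equiv -2\alpha\pmod{(x,y,t)T}$, so we exploit the prescribed-set freedom \emph{back in the earlier choice of} $\alpha$ (placing into the set $D'$ there the at most $|R|<|\mathbf{R}|$ reals for which $-2\alpha$ would make $\overline{\omega}+\overline{z}\overline{h}$ algebraic over the enlarged subring). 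Analogously for $B=t\nu-2\beta$, and finally $C=(\alpha^2+\beta^2)-r\mu-s\nu$ via one further application of Lemma~\ref{3}. Setting $q=0$, conditions (1)--(8) hold by construction. Case~2 runs in parallel: first pick $\omega\in\mathbf{R}$ transcendental over $R=\mathbf{Q}[z,w]_{(z,w)}$ modulo $(x,y,z)T$ via Lemma~\ref{avoidance}, then execute the same construction with $t=z$, $u=0$, and $q=\omega$; the identity $rA+sB+tC+(r^2+s^2)/t=0$ is unchanged by the value of $q$, as a direct computation confirms.

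The main obstacle is verifying condition~(9), i.e., the six N-pair axioms for the new pair. Axioms (1)--(4) are immediate by construction, axiom (5) is exactly the preserved transcendence just discussed, and axiom (6) demands $QS_Q=(r,s,t)S_Q$ for $Q=(x,y,t)T\cap S$. The a~priori obstruction is $e:=(r^2+s^2)/t\in T$, which lies in $QT$ but not evidently in $(r,s,t)S$. The syzygy $rA+sB+tC=-e$---the algebraic identity built into the definitions of $A,B,C$ via the choices of $\mu,\nu$---forces $e=-rA-sB-tC\in(r,s,t)S$, and this is precisely the closure datum Lemmas~\ref{elem} and~\ref{I} need to identify $S_Q$ as generated by $r,s,t$, completing the verification.
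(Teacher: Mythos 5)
Your opening moves match the paper's: Lemma~\ref{F} (resp.\ a direct choice of $\omega$) to fix $u$, then Lemma~\ref{3} with $I=tT$, $H=\{tT\}$ and the prescribed-set freedom in the reals to adjoin $r=x+\alpha t$ and $s=y+\beta t$ while keeping $\omega+\overline{z}\overline{h}$ transcendental modulo $(x,y,t)T$. But the heart of the matter is condition (9), specifically N-pair condition (6), $QS_Q=(r,s,t)S_Q$, and here your argument has a genuine gap. You claim that one round of adjunctions ($A$, $B$, and then $C$, which incidentally is not adjoined by a further application of Lemma~\ref{3} but is forced into the ring by the syzygy, being $(-(r^2+s^2)/t-Ar-Bs)/t$-type data, i.e.\ $C=(Y_3-Ar-Bs)/t$ lies in $T$ and in the quotient field) suffices: the syzygy $r^2+s^2+Art+Bst+Ct^2=0$ indeed puts the old obstruction $Y_3=-(r^2+s^2)/t$ into $(r,s,t)S$. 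However, adjoining $A=-2\alpha+t\mu$ and $B=-2\beta+t\nu$ with $\mu,\nu$ transcendental creates a \emph{new} element of $Q\cap S$, namely $Y_4$ (in the $q=0$ case essentially $C-\tfrac14(A^2+B^2)$), and the conclusion of Lemmas~\ref{elem} and~\ref{I} is exactly that the localization of the one-round ring at $Q$ is $F_4[r,s,t,Y_4]_{(r,s,t,Y_4)}$ --- a ring whose maximal ideal needs the \emph{fourth} generator $Y_4$ --- not that $r,s,t$ suffice. One checks directly that $Y_4\notin(r,s,t)S_Q$ for this finite-stage ring: writing $Y_4=\sigma_1r+\sigma_2s+\sigma_3t$ in $T$ and comparing with a putative expression $ar+bs+ct$ over $S_Q$ gives $\sigma_2\equiv b$ modulo $P$, contradicting the transcendence of $\sigma_2$ over $F_4$. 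So your citation of Lemmas~\ref{elem} and~\ref{I} inverts what they say; condition (6) fails for the ring you construct, and condition (9) is not established.

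What the paper actually does, and what your proposal is missing, is an infinite iteration: each time the current obstruction $Y_{n-1}$ is forced into $(r,s,t)$ by adjoining new coefficients $\widetilde A_n=A_{n-1}+t\sigma_{1n}$, $\widetilde B_n=B_{n-1}+t\sigma_{2n}$ (with $\widetilde C_n$ coming along for free), a fresh obstruction $Y_n$ appears; Lemma~\ref{I} is precisely the induction step showing the configuration $(R_n)_{Q_n}=F_n[r,s,t,Y_n]_{(r,s,t,Y_n)}$ with hypotheses (1)--(4) reproduces itself, and only in the union $S=\bigcup R_n$ (legitimized by Lemma~\ref{union}) does one get $QS_Q=(r,s,t,Y_3,Y_4,\dots)S_Q=(r,s,t)S_Q$, since each $Y_n$ lands in $(r,s,t)R_{n+1}$ one stage later. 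The named elements $A,B,C,\mu,\nu$ of the statement are the stage-four data $\widetilde A_4,\widetilde B_4,\widetilde C_4,\sigma_{14},\sigma_{24}$, which is why condition (7) survives the passage to the union. Relatedly, the transcendence bookkeeping for condition (5) must be done at every one of the infinitely many stages (each new real $\sigma_{in}$ is chosen transcendental over the previous residue data together with $\omega+\overline{z}\overline{h}$), not by a single retroactive enlargement of the set $D'$ at the choice of $\alpha$ as you suggest.
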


\begin{proof}
To prove this result, we must choose elements $\alpha,\beta, \mu,\nu\in \mathbf{R}$, as well as either $u$ or $\omega$,
depending on the form of $R$.
Then we define $r,s,t,A,B,C,q$ so that Assertions (1-6) and Assertion (8) are satisfied.
Satisfying Assertion (7) is easy and so the only real obstacle is Assertion (9).
The basic plan is to obtain $S=\bigcup R_i$ with $(R_{i+1},\Gamma)$ an A-extension of $(R_i,\Gamma)$ for every $i$.
Then, by Lemma~\ref{union}, $(S,\Gamma)$ is an A-extension of $(R,\Gamma)$ and it only remains to adjoin one more element to $\Gamma$.
Throughout the proof, $P$ will denote the prime ideal $(x,y,t)T$ and $Q_i$ will denote $P\cap R_i$.
To verify that $(P,\{\omega+zh\})$ satisfies the necessary properties to allow it to be adjoined to $\Gamma$, we really do not need to understand $S$, only the simpler ring $S_{P\cap S}$.

We will deal with both cases simultaneously although we must note a few differences along the way.
In the $u=0$ case, as $|R/(x,y,z)T\cap R|<|T/(x,y,z)T|$, we may choose $\omega\in \mathbf{R}$ such that $\omega$ is transcendental over $R/(x,y,z)T\cap R$ as an element of  $T/(x,y,z)T$.
Then, by setting $h=0$ in the $u=0$ case, the two assertion (9)'s become identical.
For the transcendental case, we begin by applying Lemma~\ref{F} to get the appropriate element $u$ and an extension $R_1$ of $R$ satisfying all conclusions of that lemma;
in particular, $u\in R_1$ and $(R_1,\Gamma)$ is an N-pair.
For the $u=0$ case, we simply let $R_1=R$.
Either way, with $t=z+uw$ and $P=(x,y,t)T$, we see that $Q_1=P\cap R_1=tR_1$.
Moreover, if $F$ is the quotient field of $R$ in the transcendental case or $\mathbf{Q}(w)$ in the $u=0$ case, $(R_1)_{Q_1}=F[t]_{(t)}$.
We clearly have
$\omega+\overline{z}\overline{h}$ transcendental over $R_1/Q_1$ as an element of $T/P$.
Next we claim that $tT\nsubseteq P_{\lambda}$ for all $\lambda\in \Gamma$.
Certainly the only possible $P_{\lambda}$ which can contain it is $(x,y,t)T$.
In both cases, there is no such $P_{\lambda}$ by hypothesis.
Hereafter, except for the construction of $R_4$, the two cases will be handled identically.

Now set  $\Lambda=\Gamma\cup\{((x,y)T,\emptyset)\}$ and
$\Delta=\{P_{\lambda} | \lambda\in\Lambda\}$.
As $tT$ is a principal prime ideal of $T$, we can set $H=\{tT\}$ and then $tT\nsubseteq P^{\prime}$ for every $P^{\prime}\in \Delta\cup (G-H)$.
In  fact, we will use $H=\{tT\}$ and this choice of $\Lambda$ in every application of Lemma~\ref{3} contained in this proof.
Here we apply Lemma~\ref{3} with $v=x$ and $I=tT$ to find $d=\alpha t$ so that if $R_2$ is the intersection of the quotient field of $R_1[x+\alpha t]$ with $T$, $(R_2,\Gamma)$ is an N-pair.
Further, we may choose $\alpha\in\mathbf{R}$ so that $\alpha$ is transcendental over
$(R_1/Q_1)[\omega+\overline{z}\overline{h}]$ as an element of $T/P$.
Similarly we apply Lemma~\ref{3} to $R_2$ with $v=y$ and $I=tT$ to find $d=\beta t$ so that if $R_3$ is the intersection of 
the quotient field of $R_2[y+\beta t]$ with $T$, then $(R_3,\Gamma)$ is an N-pair.
We choose $\beta\in\mathbf{R}$ so that $\beta$ is transcendental over
$(R_2/Q_2)[\omega+\overline{z}\overline{h},\alpha]$ as an element of $T/P$ and
transcendental over $(R_2/tR_2)[\alpha]$ as an element of $T/tT$.
Therefore we have $u,r,s,t\in R_3$.

We have not yet chosen $\mu,\nu\in \mathbf{R}$, but it is useful at this time to perform a calculation which will be valid for any choice of $\mu,\nu$.
We claim that $r^2+s^2+Art+Brt+Ct^2=0$.
The verification of the claim is a straightforward calculation.
For future reference, we will actually perform the calculation in $\mathbf{ R}[[x,y,z,w]]$.
This makes sense as all of the elements involved are contained in
$\mathbf{R}+x\mathbf{R}+y\mathbf{R}+z\mathbf{R}+w\mathbf{R}\subset T$ and so have
natural liftings to $\mathbf{ R}[[x,y,z,w]]$.
After noting $r^2=x^2+2\alpha rt-\alpha^2t^2$ and $s^2=y^2+2\beta st-\beta^2t^2$, we have
\begin{align*}
&r^2+s^2+Art+Bst+Ct^2\\
&=(x^2+y^2)+rt(A+2\alpha)+st(B+2\beta)+t^2(C-(\alpha^2+\beta^2))\\
&=(x^2+y^2)+rt(t\mu+rq-2tq\alpha)+st(t\nu+sq-2tq\beta)
+t^2(-r\mu-s\nu+tq(\alpha^2+\beta^2))  \\
&=(x^2+y^2)+rt(rq-2tq\alpha)+st(sq-2tq\beta)+t^2(tq(\alpha^2+\beta^2))\\
&=(x^2+y^2)+tq(r^2-2rt\alpha+t^2\alpha^2+s^2-2st\beta+t^2\beta^2)\\
&=(x^2+y^2)(1+tq)
\end{align*}
This element is of course zero in $T$ since $x^2+y^2=0$.
For now, what this means is that $r^2+s^2\in tT$ and so $Y_3=-(r^2+s^2)/t\in R_3$.
We will next establish the properties of $R_3$ that we need.

As observed above, $Q_1=tR_1$ and
$\omega+\overline{z}\overline{h}$ is transcendental over $R_1/Q_1$ as an element of $T/P$.
Next we claim that $\overline{r}$ is transcendental over $R_1/tR_1$ as an element of $T/tT$.
If not, $\overline{x}$ is algebraic over $R_1/tR_1$ as an element of $T/tT$.
So we have $r_nx^n+\cdots+r_0\in tT$ with $r_n,\dots,r_0\in R_1$, $r_n\notin tR_1$, and $n$ minimal.
As $x\in P$, $r_0\in Q_1=tR_1$.
Thus the relation continues to hold if we replace $r_0$ by $0$.
However, now we can divide by $x$ and reduce the degree of the polynomial,
contradicting the minimality of $n$ and so proving the claim.
It follows from this and Lemma~\ref{describe} that $R_2=R_1[r]_{\m\cap R_1[r]}$
and so we have $(R_2)_{Q_2}=F[r,t]_{(r,t)}$.

We claim that $R_3$ is just a localization of $R_2[Y_3,s]$.
By Lemma~\ref{describe}, $R_3$ is a localization of a subring of $R_2[s,t^{-1}]$ and so of course
also a localization of a subring of $R_2[Y_3,s,t^{-1}]$.
Thus, we need only show $R_2[Y_3,s]\cap tT=tR_2[Y_3,s]$.
Using (4),(5),(6), which are valid equations in $T$ for any choice of $\mu,\nu$, we see that $Y_3=Ar+Bs+Ct=(r^2+s^2)q+(1+tq)(-2\alpha r-2\beta s+(\alpha^2+\beta^2)t)$  and so, as $r^2+s^2=-tY_3$, we have $Y_3=(1+tq)^{-1}(1+tq)(-2\alpha r-2\beta s+(\alpha^2+\beta^2)t)=-2\alpha r-2\beta s+(\alpha^2+\beta^2)t$.
We next show that $\overline{Y_3}$ is transcendental over $R_2/tR_2$ as an element of $T/tT$.
It suffices to show this with $\alpha r+\beta s$ in place of $Y_3$.
Since $r\in R_2$ and $s^2\equiv -r^2$ modulo $tT$, $\overline{s}$ is a nonzero element algebraic over $R_2/tR_2$.
Hence, if $\overline{Y_3}$ is algebraic over $R_2/tR_2$ and so also algebraic over $R_2/tR_2[\alpha]$, $\beta$ is algebraic over $R_2/tR_2[\alpha]$, contradicting our choice of $\beta$.
Of course, $\overline{Y_3}$ is also transcendental over $R_2/tR_2[\overline{s}]$ as an element of $T/tT$.
Now suppose $g\in R_2[Y_3,s]\cap tT$.
As $s^2\in R_2[Y_3]$, we may write $g=(a_ks+b_k)Y_3^k+\cdots+(a_0s+b_0)$ for some integer $k$
and each $a_i,b_i\in R_2$.
As $\overline{Y_3}$ is transcendental over $R_2/tR_2[\overline{s}]$, each $a_i+b_is\in tT$.
Now we will have $R_2[Y_3,s]\cap tT=tR_2[Y_3,s]$ if we can show $a_i,b_i\in tR_2=tT\cap R_2$.
If not, either $a_i$ or $b_i$ or both is not in $(r^m,t)T$ for sufficiently large $m$
and so is also not in $(r^m,t)R_2$ for sufficiently large $m$.
Choosing $m$ maximal so that $a_i,b_i\in (r^m,t)R_2$, we may write
$a_i=c_ir^m+\widetilde{c}_it$ and $b_i=d_ir^m+\widetilde{d}_it$.
As $(tT:_Tr^m)=tT$, $c_i+d_is\in tT$ and $c_i,d_i$ are not both contained in $(r,t)R_2=Q_2$.
But $c_i\in (s,t)T\cap R_2\subset Q_2=(r,t)R_2$ and this forces $d_i\in ((r,t):_{R_2}s)\subseteq Q_2$.
This contradiction proves each $a_i,b_i\in tR_2$ and
completes the proof of the claim that $R_3$ is just a localization of $R_2[Y_3,s]$.
This means then that $R_3$ is just a localization of $R_1[r,s,Y_3]$
and so $(R_3)_{Q_3}=F[r,s,t,Y_3]_{(r,s,t,Y_3)}$.

Next, let $A_3=rq-2\alpha(1+tq)$, $B_3=sq-2\beta(1+tq)$, and $C_3=(\alpha^2+\beta^2)(1+tq)$.
Since we chose $\alpha,\beta$ so that $\alpha,\beta,\omega+\overline{z}\overline{h}$  are algebraically independent over $F$ as elements of $T/P$, clearly
$\overline{A}_3,\overline{B}_3,\omega+\overline{z}\overline{h}$  are algebraically independent over $F$ as elements of $T/P$.
In the $q=0$ case, we also have $C_3=(1/4)(A_3^2+B_3^2)$.
Thus, for $n=3$ and $q=0$, with $F_3=F$, we have the following:
\begin{enumerate}
  \item $(R_n,\Gamma)$ is an N-pair
  \item $(R_n)_{Q_n}=F_n[r,s,t,Y_n]_{(r,s,t,Y_n)}$
    \item $Y_n=A_nr+B_ns+C_nt$ with $A_n,B_n,C_n\in T$ such that  $\overline{A}_n,\overline{B}_n,\omega+\overline{z}\overline{h}$  are algebraically independent over $F_n$ as elements of $T/P$
  \item $s^2\in F_n[r,t]+Y_nF_n[r,t]+sF_n[r,t]$

  \item $C_n\in (F_n[t]\cap T)[A_n,B_n]$
\end{enumerate}

In the $q=\omega$ case, (1)-(4) are satisfied.

We will construct an ascending union of N-subrings $R_3\subset R_4\subset R_5\subset \cdots$,
each of the same cardinality,
such that $R_n$ satisfies conditions (1)-(5) for every $n\geq 4$ along with the additional condition that
$Y_{n-1}\in (r,s,t)R_n$.
Note that Lemma~\ref{intersection} tells us that $(R_n)_{Q_n}$ is the intersection of
$T_P$ with the quotient field of $R_n$, a fact which allows us to freely use
Lemmas~\ref{elem} and \ref{I} in our construction process.

We now describe the construction of $R_n$.
We wish to adjoin elements $\widetilde{A}_n=A_{n-1}+t\sigma_{1n}$ and $\widetilde{B}_n=B_{n-1}+t\sigma_{2n}$.
To do this, we may apply Lemma~\ref{3} twice with $I=tT$ and $v=A_{n-1},B_{n-1}$ respectively to get
an extension $R_n$ with $(R_n,\Gamma)$ an A-extension of $(R_{n-1},\Gamma)$ and $\widetilde{A}_n,\widetilde{B}_n\in R_n$.
In defining these extensions, we can and do impose additional conditions on $\sigma_{1n}$ and $\sigma_{2n}$.
We choose $\sigma_{1n}\in \mathbf{R}$ to be transcendental over
$(R_{n-1}/Q_{n-1})[\omega+\overline{z}\overline{h},\overline{A}_{n-1},\overline{B}_{n-1}]$ as an element of $T/P$.
Likewise we choose $\sigma_{2n}\in \mathbf{R}$ to be transcendental over $(R_{n-1}/Q_{n-1})[\omega+\overline{z}\overline{h},\overline{A}_{n-1},\overline{B}_{n-1}, \sigma_{1n}]$ as an element of $T/P$.
Thus  $\omega+\overline{z}\overline{h},\overline{A}_{n-1},\overline{B}_{n-1},\sigma_{1n}, \sigma_{2n}$ is a set of algebraically independent elements over $R_{n-1}/Q_{n-1}$.
Since $\overline{A}_{n-1},\overline{B}_{n-1}$ are algebraically independent over $F_{n-1}$ as elements of $T/P$,
$F_n=F_{n-1}(\widetilde{A}_n,\widetilde{B}_n)$ is a field contained in $(R_n)_{Q_n}$.
We also note that if $\widetilde{C}_n=C_{n-1}-r\sigma_{1n}-s\sigma_{2n}$, $Y_{n-1}=\widetilde{A}_nr+\widetilde{B}_ns+\widetilde{C}_nt$ and so
$\widetilde{C}_n$ is in the intersection of $T$ with the quotient field of $R_n$, giving $\widetilde{C}_n\in R_n$
and thus $Y_{n-1}\in (r,s,t)R_n$.
By Lemma~\ref{describe}, $R_n$ is a localization of $R_{n-1}[\widetilde{A}_n,\widetilde{B}_n,t^{-1}]\cap T$.
So $(R_n)_{Q_n}$ is a localization of $F_{n-1}[r,s,t,Y_{n-1},\widetilde{A}_n,\widetilde{B}_n,t^{-1}]\cap T$.
If $R_{n-1}$ satisfies conditions (1)-(5), we get the full hypothesis of Lemma~\ref{I} when we consider the situation $D=(R_{n-1})_{Q_{n-1}}$, $\tau=\omega+zh$, $\xi_1=Y_{n-1}$, and
$\delta_1,\delta_2,\delta_3$ equal to $A_{n-1},B_{n-1},C_{n-1}$ respectively.
It immediately follows that $R_n$ also satisfies conditions (1)-(5).
Of course, $Y_n$ is the element $\xi_2$ specified in the conclusion of that lemma
and $A_n,B_n,C_n$ equal $\sigma_1,\sigma_2,\sigma_3$, respectively.

In the case $n=4,q=\omega$, we must check conditions (2)-(5) directly.
We define $Y_4=\widetilde{C}_4-(1/4)(\widetilde{A}_4^2+\widetilde{B}_4^2)$,
$A_4=q\alpha-\sigma_{14}$,  $B_4=q\beta-\sigma_{24}$, and
$C_4=(t/4)(A_4^2+B_4^2)+(1/2)(\widetilde{A}_4A_4+\widetilde{B}_4B_4)$.
Since $\widetilde{A}_4,\widetilde{B}_4\in F_4$, condition (5) is immediate.

To see condition (3), we simply calculate, first noting that
$\widetilde{A}_4+2A_4t=rq-2\alpha-t\sigma_{14}$ and $\widetilde{B}_4+2B_4t=sq-2\beta-t\sigma_{24}$.
\begin{align*}
&Y_4-A_4r-B_4s-C_4t\\
&=\widetilde{C}_4-(1/4)(\widetilde{A}_4^2+\widetilde{B}_4^2)-A_4r-B_4s-(t^2/4)(A_4^2+B_4^2)-(1/2)(\widetilde{A}_4A_4+\widetilde{B}_4B_4)t\\
&=\widetilde{C}_4-(1/4)(\widetilde{A}_4(\widetilde{A}_4+2A_4t)+\widetilde{B}_4(\widetilde{B}_4+2B_4t))-A_4r-B_4s-(t^2/4)(A_4^2+B_4^2)\\
&=(\alpha^2+\beta^2)(1+tq)-r\sigma_{14}-s\sigma_{24}-(1/4)(rq-2\alpha(1+tq)+t\sigma_{14})(rq-2\alpha-t\sigma_{14})\\
&-(1/4)(sq-2\beta(1+tq)+t\sigma_{24})(sq-2\beta-t\sigma_{24})-r(q\alpha-\sigma_{14})-s(q\beta-\sigma_{24})\\
&-(t^2/4)(\sigma_{14}^2+\sigma_{24}^2-2q(\sigma_{14}\alpha+\sigma_{24}\beta)+q^2(\alpha^2+\beta^2))\\
&=-(1/4)(r^2q^2-4\alpha rq-t^2\sigma_{14}^2-2\alpha tq(rq-t\sigma_{14})\\
&+s^2q^2-4\beta sq-t^2\sigma_{24}^2-2\beta tq(sq-t\sigma_{24}))\\
&-rq\alpha-sq\beta-(t^2/4)(\sigma_{14}^2+\sigma_{24}^2-2q(\sigma_{14}\alpha+\sigma_{24}\beta)+q^2(\alpha^2+\beta^2))\\
&=-(1/4)(r^2q^2-2\alpha tq(rq)+s^2q^2-2\beta tq(sq))-(t^2/4)(q^2(\alpha^2+\beta^2))\\
&=-(q^2/4)((r^2+s^2)-2t(\alpha r+\beta s)+t^2(\alpha^2+\beta^2))\\
&=-(q^2/4)(r^2+s^2+tY_3)=0
\end{align*}
The rest of condition (3) follows immediately from the construction.

For condition (4), we first note that $\widetilde{C}_4=Y_4+(1/4)(\widetilde{A}_4^2+\widetilde{B}_4^2)\in Y_4+F_4$.
Thus $Y_3=\widetilde{A}_4r+\widetilde{B}_4s+\widetilde{C}_4t\in rF_4+sF_4+tF_4+tY_4$.
Hence $s^2=-r^2-tY_3\in F_4[r,t]+Y_4F_4[r,t]+sF_4[r,t]$.
We also note $Y_3\in F_4[r,t,Y_4,s]$.
Finally, by Lemma~\ref{describe}, $R_4$ is a localization of $R_3[\widetilde{A}_4,\widetilde{B}_4, t^{-1}]\cap T$,
so $(R_4)_{Q_4}$ is a localization of $F_3[r,s,t,Y_3,\widetilde{A}_4,\widetilde{B}_4, t^{-1}]\cap T$,
which forces it to also be a localization of the larger ring $F_4[r,s,t,Y_4, t^{-1}]\cap T[F_4]$.
Now the entire hypothesis of Lemma~\ref{elem} is satisfied and we get condition (2).

Let $S=\bigcup R_n$; by Lemma~\ref{union}, $(S,\Gamma)$ is an A-extension of $(R,\Gamma)$.
Note that, referring to the construction of $R_4$, if we let $\mu=\sigma_{14}$, $\nu=\sigma_{24}$,
$A=\widetilde{A}_4$, $B=\widetilde{B}_4$, and $C=\widetilde{C}_4$, we have the desired $A,B,C\in S$.
Finally, to see that $(S,\Gamma\cup \{((x,y,t)T,\{\omega +zh\})\})$ is an N-pair, we let $Q=P\cap S$ and
we need only show $QS_Q=(r,s,t)S_Q$ and $\omega+\overline{z}\overline{h}$ is transcendental over $R/Q$ as an element of $T/P$.
As $QS_Q=(r,s,t,Y_3,Y_4,\cdots)S_Q$ and each $Y_n\in (r,s,t)S$, the first is true.
As $\omega+\overline{z}\overline{h}$ is transcendental over $R_n/Q_n$ as an element of $T/P$
for every $n$, the second is true as well.
\end{proof}

\bigskip
\bigskip

\begin{lemma}\label{primary}
Let $(S,\{((x,y,z)T,\{\omega\})\})$ be an N-pair constructed as in the proof of
Lemma~\ref{P} and let $m$ be any positive integer.
Then there exist $d_m,e_m\in S$ such that $(x,y,z^m)T=(d_m,e_m,z^m)T$.
\end{lemma}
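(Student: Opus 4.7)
My plan is induction on $m$. For $m = 1$, since $r = x + \alpha z$ and $s = y + \beta z$ lie in $S$, we have $(r, s, z)T = (x, y, z)T$, so take $d_1 = r$ and $e_1 = s$. For $m \geq 2$, my strategy is to construct $d_m, e_m \in S$ such that $d_m, e_m \in (x, y, z^m)T$ and $d_m \equiv x$, $e_m \equiv y$ modulo $zT$; the desired ideal equality then follows from a syzygy argument.

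Syzygy argument: write $d_m = xu + yv + z^m c$ and $e_m = xu' + yv' + z^m c'$ with coefficients in $T$. Reducing modulo $z$ gives relations $(\overline{u} - 1)\overline{x} + \overline{v}\overline{y} = 0$ and $\overline{u}'\overline{x} + (\overline{v}' - 1)\overline{y} = 0$ in $T/zT \cong \mathbf{R}[[x,y,w]]/(x^2+y^2)$. In this hypersurface, the syzygy module of $(x,y)$ is generated by the Koszul syzygy $(y, -x)$ and the additional syzygy $(x, y)$ coming from $x\cdot x + y\cdot y = 0$, which forces $u - 1, v' - 1 \in (x, y, z)T$ and $v, u' \in (x, y, z)T$. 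Hence $u, v'$ are units in $T$ and $vu' \in \mathfrak{m}_T^2$, so the determinant $uv' - vu'$ is a unit in $T$. Inverting the matrix $\binom{u\ v}{u'\ v'}$ then expresses $x, y$ as $T$-linear combinations of $d_m, e_m, z^m$, giving $(x,y,z^m)T \subseteq (d_m, e_m, z^m)T$.

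For the construction of $d_m$, I use the explicit elements $\widetilde{A}_n \in S$ (for $n \geq 4$) produced in the proof of Lemma~\ref{P}. Recall $\widetilde{A}_n = A_{n-1} + z\sigma_{1,n}$, with $A_3 = r\omega - 2\alpha(1 + z\omega)$, $A_4 = \omega\alpha - \sigma_{1,4}$, and $A_n = \sigma_{1,n} \in \mathbf{R}$ for $n \geq 5$. Their images in $T/(x,y)T \cong \mathbf{R}[[z,w]]$ are, setting $x = y = 0$,
\[
\widetilde{A}_4\big|_{x=y=0} = -2\alpha + z(\sigma_{1,4} - \alpha\omega), \qquad \widetilde{A}_n\big|_{x=y=0} = A_{n-1} + zA_n \text{ for } n \geq 5,
\]
with $A_{n-1}, A_n \in \mathbf{R}$. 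Define $f_1 := -\tfrac{1}{2}\widetilde{A}_4 \in S$ and inductively $f_k := f_{k-1} + (-1)^{k+1}\tfrac{z^{k-1}}{2}\widetilde{A}_{k+3} \in S$ for $k \geq 2$. A telescoping induction shows $f_k\big|_{x=y=0} = \alpha + (-1)^{k+1}\tfrac{z^k}{2}A_{k+3}$, so $f_k \equiv \alpha \pmod{z^k}$ in $\mathbf{R}[[z,w]]$. Setting $d_m := r - z f_{m-1} \in S$, one finds $d_m\big|_{x=y=0} \in z^m T$, hence $d_m \in (x, y, z^m)T$, while $d_m \equiv r \equiv x \pmod{zT}$. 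Symmetrically, $e_m := s - z g_{m-1}$ (using the analogous elements $\widetilde{B}_n \in S$) does the job.

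The main obstacle is arranging the telescoping cancellation in the construction of $f_k$: each new summand $(-1)^{k+1} \tfrac{z^{k-1}}{2}\widetilde{A}_{k+3}$ must cancel precisely the leading $z^k$-error arising from $\widetilde{A}_{k+2}$ in $f_{k-1}\big|_{x=y=0}$, which works cleanly because the recursion $\widetilde{A}_n = A_{n-1} + z\sigma_{1,n}$ with $A_{n-1} \in \mathbf{R}$ for $n \geq 5$ makes the image of $\widetilde{A}_n$ in $\mathbf{R}[[z, w]]$ a degree-one polynomial in $z$.
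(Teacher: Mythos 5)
Your proof is correct, and its core is the same telescoping device the paper uses: the elements $\widetilde{A}_n\in S$ together with the recursion $A_{n-1}=\widetilde{A}_n-zA_n$ ($A_n=\sigma_{1n}\in\mathbf{R}$ for $n\ge 5$, $A_4=\omega\alpha-\sigma_{14}\in\mathbf{R}$) let one build elements of $S$ agreeing with the relevant data modulo arbitrarily high powers of $z$; in fact your $d_m=r-zf_{m-1}=r+\tfrac z2\widetilde{A}_4+\tfrac{z^2}2\widetilde{A}_5-\tfrac{z^3}2\widetilde{A}_6+\cdots$ is literally the element the paper produces. Where you diverge is the endgame. The paper exploits the exact identity $x(1+z\omega/2)=r+(z/2)(A+zA_4)$ (coming from $\mu=\sigma_{14}=\alpha\omega-A_4$), so its $d_m$ is congruent modulo $z^mT$ to the unit multiple $x(1+z\omega/2)$ of $x$, and the equality $(d_m,e_m,z^m)T=(x,y,z^m)T$ falls out immediately. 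You record only the weaker facts $d_m\in(x,y,z^m)T$ and $d_m\equiv x\pmod{zT}$, and then recover the containment $(x,y,z^m)T\subseteq(d_m,e_m,z^m)T$ via the syzygy computation over $T/zT\cong\mathbf{R}[[x,y,w]]/(x^2+y^2)$ (syzygies of $(x,y)$ generated by $(y,-x)$ and $(x,y)$, both with entries in $(x,y)$) plus the invertibility of the coefficient matrix. That argument is valid and self-contained, and it has the mild virtue of not needing the unit-multiplier identity at all; the cost is the extra matrix-factorization/determinant step, which the paper's sharper congruence renders unnecessary.
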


\begin{proof}
For any $m>0$, we will find $d_m,e_m\in S$ such that
$x(1+z\omega/2)-d_m$ and $y(1+z\omega/2)-e_m$ are in $z^mT$.
That forces $d_m,e_m\in (x,y,z^m)T\cap S$ and clearly, as $1+z\omega/2$ is a unit,
$(d_m,e_m,z^m)T=(x,y,z^m)T$.
We will show that we can find $d_m$; the other case is identical.

To find $d_m$, we first observe
$x=r-z\alpha =r-(z/2)(-A+z\mu+r\omega-2\alpha z\omega)=
r-(z/2)(-A+z\mu+x\omega-\alpha z\omega)$ and so $x(1+z\omega/2)=r-(z/2)(-A+z(\mu-\alpha\omega))$.
Next recall from the construction that $\mu=\sigma_{14}=\alpha\omega-A_4$ and so
$x(1+z\omega/2)=r-(z/2)(-A+z(-A_4))=r+(z/2)(A+zA_4)$.
Now, for $n\ge 5$, $A_{n-1}+z\sigma_{1n}=\widetilde{A}_n\in S$ and $A_n=\sigma_{1n}$.
So $A_4=\widetilde{A}_5-zA_5=\widetilde{A}_5-z\widetilde{A}_6+z^2A_6$ and so on.
An easy induction shows $A_4\in S+z^mT$ for any $m$.
So $x(1+z\omega/2)=(r+zA/2)+(z^2/2)A_4\in S+z^mT$ for any $m$.
\end{proof}

\begin{lemma}\label{induction}
Let $(R,\Gamma)$ be an N-pair with $((x,y,z)T,\{\omega\})\in\Gamma$, and $h,v\in T$.
%DJ --- Added the word ``then" and a comma.
%RH   I added one more comma
If $\omega+\overline{z}\overline{h}$ is transcendental over $R$ as an element of $T/(x,y)T$, then there exists an
A-extension $(S,\Gamma^{\prime})$, where $\Gamma^{\prime}=\Gamma\cup\{(P,\{\omega+zh\})\}$
for some $P\in \Spec T$, and an element $c\in S$
such that
\begin{enumerate}
  \item $v-c\in \m^2$
  \item  For every finitely generated ideal $I$ of $R$ such that
$I\nsubseteq P_{\lambda}$ for all $\lambda\in\Gamma^{\prime}$, we have $IT\cap R\subset IS$.
  \item If $P^{\prime}$ is a height two prime ideal of $T$ which is minimal over $IT$
for some finitely generated ideal $I$ of $R$ and $P^{\prime}\neq P_{\lambda}$ for all $\lambda\in\Gamma^{\prime}$, then $P^{\prime}\cap S \nsubseteq P_{\lambda}$ for all $\lambda\in\Gamma^{\prime}$.
\end{enumerate}

If $\omega+\overline{z}\overline{h}$ is algebraic over $R$ as an element of $T/(x,y)T$
(a case we will later see cannot occur), there exists an A-extension $(S,\Gamma)$ and an element $c\in S$ such that conditions (1)-(3) above hold.

\end{lemma}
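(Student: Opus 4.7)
The plan is to build $S$ by a transfinite iteration of the preceding lemmas. In the transcendental case I would first invoke Lemma~\ref{P} on $(R,\Gamma)$ with the given $\omega,h$, yielding an A-extension $(R_0,\Gamma_0)$ with $\Gamma_0 = \Gamma \cup \{((x,y,t)T,\{\omega+zh\})\}$ for some $t = z+uw \in R_0$; in the algebraic case I would simply take $(R_0,\Gamma_0) = (R,\Gamma)$. Next, one application of Lemma~\ref{S} produces a further A-extension $(R_1,\Gamma_0)$ containing an element $c$ with $v-c \in \m^2$. This handles condition~(1) once $R_1 \subseteq S$.

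To secure conditions~(2) and~(3), I would assemble a single indexing set $\Sigma$ consisting of (a) all pairs $(I,a)$ where $I$ is a finitely generated ideal of $R$ with $I \not\subseteq P_\lambda$ for every $\lambda \in \Gamma_0$ and $a \in IT \cap R$, and (b) all height-two primes $P' \in \Spec T$ minimal over some such $IT$ and distinct from every $P_\lambda$. Since $R$ has at most $|R|$ finitely generated ideals and $|R|$ elements, and any ideal has only finitely many minimal primes, $|\Sigma| \leq |R| < |\mathbf{R}|$. After well-ordering $\Sigma$, I would construct an ascending chain of A-extensions beginning at $(R_1,\Gamma_0)$: at a successor stage whose element is a pair $(I,a)$, apply Lemma~\ref{4} to an extension in which $a \in IR_{\alpha+1}$; at a successor stage whose element is a height-two prime $P'$, apply Lemma~\ref{two} to arrange $P' \cap R_{\alpha+1} \not\subseteq P_\mu$ for every $\mu \in \Gamma_0$. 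At limit stages I would take the union. Let $S$ denote the final union.

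By Lemma~\ref{union}, each limit stage is an N-pair and $(S,\Gamma_0)$ is an A-extension of $(R,\Gamma)$; the cardinality hypothesis of Lemma~\ref{union} is met because $|\Sigma| < |\mathbf{R}|$. Condition~(1) holds since $c \in R_1 \subseteq S$. Condition~(2) follows because every eligible pair $(I,a)$ is processed at some stage, which places $a$ into the extension of $I$ in the next ring and hence in $IS$. Condition~(3) follows in the same fashion from the successor stages attached to the height-two primes.

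The main potential obstacle is ensuring that the hypotheses of Lemmas~\ref{4} and~\ref{two} remain valid all the way along the chain. Since those hypotheses are statements about the primes $P_\lambda$ of $T$ (namely $I \not\subseteq P_\lambda$ and $Q \neq P_\lambda$), not about the ambient ring $R_\alpha$, they are inherited automatically from the way $\Sigma$ was defined and require no re-verification at each stage. The only bookkeeping to track is the cardinality estimate $|\Sigma| < |\mathbf{R}|$, which is what permits Lemma~\ref{union} to apply throughout and has been verified above.
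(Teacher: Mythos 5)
Your outline reproduces the paper's own proof almost step for step: Lemma~\ref{P} (transcendental case) to adjoin $(P,\{\omega+zh\})$, Lemma~\ref{S} to capture $c$ with $v-c\in\m^2$, then a transfinite recursion over an index set of size $|R|$ whose successor stages invoke Lemma~\ref{4} for the pairs $(I,a)$ and Lemma~\ref{two} for the height-two primes, with Lemma~\ref{union} handling limit stages and the final union; your remark that the hypotheses of Lemmas~\ref{4} and~\ref{two} concern only the primes $P_\lambda$ and so persist along the chain is exactly the point the paper relies on implicitly. The one place you need to be careful is part (b) of your index set $\Sigma$: as phrased (``minimal over some \emph{such} $IT$''), it appears to inherit from part (a) the restriction $I\nsubseteq P_\lambda$, but condition (3) of the lemma imposes no such restriction on $I$, and the primes you would then omit --- minimal primes $P'$ of $IT$ with $IT$ contained in some $P_\lambda$ but $P'\neq P_\mu$ for all $\mu$ --- are precisely the ones that matter later: in Lemma~\ref{Noeth} the problematic ideal $J$ satisfies $JT\subseteq P_\lambda$, and it is for a minimal prime of that $JT$ that the conclusion $P'\cap S\nsubseteq P_\lambda$ gets used. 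The fix is simply to let (b) range over height-two primes minimal over $IT$ for \emph{arbitrary} finitely generated ideals $I$ of $R$ (distinct from every $P_\lambda$), which is what the paper's set $\Omega_2$ does; the cardinality bound $|\Sigma|\leq|R|<|\mathbf{R}|$ is unaffected, Lemma~\ref{two} needs no minimality hypothesis anyway, and the rest of your argument then coincides with the paper's. (Also arrange the well-order of $\Sigma$ to have no maximal element, as Lemma~\ref{union} formally requires.)
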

\begin{proof}
In the transcendental case, we first employ Lemma ~\ref{P} to obtain an A-extension $(R_0,\Gamma^{\prime})$ of $(R,\Gamma)$
where $\Gamma^{\prime}=\Gamma\cup\{(P,\{\omega+zh\})\}$ for some $P\in\Spec T$ of the proper form.
In the algebraic case, we simply let $(R_0,\Gamma^{\prime})=(R,\Gamma)$ and for the remainder of the proof, the two cases are identical.
Next employ Lemma ~\ref{S} to obtain an A-extension $(R_1,\Gamma^{\prime})$ of $(R_0,\Gamma^{\prime})$ with $c\in R_1$ such that $v-c\in \m^2$.
Set  $\Omega_1=\{(I,d)|  I \text{ finitely generated ideal of }R \text{
such that } I\nsubseteq P_{\lambda} \text{ for all } \lambda\in\Gamma^{\prime} \text{  and } \\d\in IT\cap R\}$.
Set $\Omega_2=\{P^{\prime}\in\Spec T | \Ht P^{\prime}=2, P^{\prime}\neq P_{\lambda} \text{ for all }\lambda\in\Gamma^{\prime},
 \text{ and }\\P^{\prime} \text{ minimal over } IT\text{ for some finitely generated ideal } I\text{ of }R\}$.
Set $\Omega=\Omega_1\cup\Omega_2$.

Clearly $|\Omega|=|R|$.
Well-order $\Omega$, letting $1$ denote its initial element, in such a way that $\Omega$ does not have a maximal element; then it satisfies the hypothesis of Lemma ~\ref{union}.
Next we recursively define a family $\{R_{\alpha} | \alpha\in\Omega \}$ to satisfy the hypothesis of Lemma ~\ref {union}.
We begin with $R_1$.
If $\gamma(\alpha)\neq\alpha$ and $\gamma(\alpha)=(I,d)$, then we choose $R_{\alpha}$ to be an A-extension of $R_{\gamma(\alpha)}$ given by Lemma ~\ref{4} such that $d\in IR_{\alpha}$.
If $\gamma(\alpha)\neq\alpha$ and $\gamma(\alpha)=P^{\prime}$, then we choose $R_{\alpha}$ to be an A-extension of $R_{\gamma(\alpha)}$ given by Lemma ~\ref{two} such that
$P^{\prime}\cap R\nsubseteq P_{\lambda}$ for all $\lambda\in\Gamma^{\prime}$.
If $\gamma(\alpha)=\alpha$, choose $R_{\alpha}=\bigcup_{\beta<\alpha}R_{\beta}$.
 Set $S=\bigcup R_{\alpha}$.
By Lemma ~\ref{union} and the observation that $|\Omega| = |R |$, we see that
$(S,\Gamma^{\prime})$ is an A-extension of $(R_1,\Gamma^{\prime})$.
Also, if $I$ is any finitely generated ideal of $R$
such that  $I\nsubseteq P_{\lambda}$ for all $\lambda\in\Gamma^{\prime}$ and $d\in IT\cap R$, then
$(I,d)=\gamma(\alpha)$ for some $\alpha\in \Omega$.
So $d\in IR_{\alpha}\subset IS$.
Thus $IT\cap R\subset IS$.
Likewise, if $P^{\prime}\neq P_{\lambda}$ is a height two prime ideal of $T$
which is minimal over a finitely generated ideal of $R$,
$P^{\prime}\cap S$ is not contained in any $P_{\lambda}$.

\end{proof}

\begin{lemma}\label{Noeth}
Suppose $(R,\Gamma)$ is an ascending union of N-pairs.
Assume that the natural map $R\to T/\m^2$ is surjective and that for every finitely generated ideal $I$ of $R$ such that $I\nsubseteq P_{\lambda}$ for all $\lambda\in\Gamma$, $IT\cap R=I$.
Further assume that if $P^{\prime}$ is a height two prime ideal of $T$ which is minimal over $IT$
for some finitely generated ideal $I$ of $R$ and $P^{\prime}\neq P_{\lambda}$ for all $\lambda\in\Gamma$, $P^{\prime}\cap R \nsubseteq P_{\lambda}$ for all $\lambda\in\Gamma$.
Then $R$ is Noetherian and the natural homomorphism $\widehat{R}\to T$ is an isomorphism.
\end{lemma}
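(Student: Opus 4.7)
The plan is to apply Proposition~\ref{prop1}: the hypothesis supplies surjectivity of $R\to T/\m^2$, so all that remains is to verify $IT\cap R=I$ for every finitely generated ideal $I$ of $R$. The closedness hypothesis delivers this whenever $I$ is not contained in any $P_\lambda$, so the real work is to handle the remaining ideals. Following the strategy of the earlier (unlabeled) lemma establishing $\widehat B\cong T/K^{i+1}$, I would invoke the remark following Lemma~21 of \cite{sing}, which asserts that failure of universal closedness forces either (a) an ideal $I$ of $R$ whose closure $IT\cap R$ is primary to $\m\cap R$ but with $I\neq IT\cap R$, or (b) an infinitely generated contraction $P\cap R$ for some $P\in\Spec T$.

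To rule out (a), I would first note that any ideal primary to $\m\cap R$ is not contained in any $P_\lambda$ (each $P_\lambda$ is non-maximal, of height two in $T$), so the hypothesis directly yields closedness of $\m\cap R$-primary ideals. It then suffices to show that the closure of $I$ being $\m\cap R$-primary forces $I$ itself to be $\m\cap R$-primary. In the earlier unnamed lemma this step used a $\Spec$-bijection coming from nilpotent kernels; here the analogue must be argued directly. If $I$ lay inside a non-maximal prime $\mathfrak q$ of $R$, then $\mathfrak q T\subsetneq T$ (since $\mathfrak q\subseteq \m$), and some minimal prime $P$ of $IT$ inside $\mathfrak q T$ would have height bounded by the number of generators of $I$, in particular non-maximal; then $IT\cap R\subseteq P\cap R\subsetneq \m\cap R$, contradicting $\m\cap R$-primality of the closure.

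To rule out (b), I would stratify by the height of $P$. Primes $P$ of height zero or one contract, by N-subring condition~(3) together with the UFD structure preserved in the ascending union (Lemma~\ref{union} and \cite[Lemma 6]{UFD}), to $(0)$ or a principal prime of $R$. For a height-two $P\neq P_\lambda$, the second hypothesis gives $P\cap R\not\subseteq P_\mu$ for every $\mu$; since $P$ is minimal over $JT$ for some finitely generated $J\subseteq R$, choosing $J$ not contained in any $P_\mu$ (possible since $P\cap R$ isn't) and invoking the first hypothesis shows $P\cap R$ is the closure of $J$, i.e., finitely generated. For $P=P_\lambda$, condition~(6) of the N-pair definition exhibits $Q_\lambda R_{Q_\lambda}$ as generated by the three elements $x+\alpha_\lambda t_\lambda,\, y+\beta_\lambda t_\lambda,\, t_\lambda$, and tracing back through the chain of A-extensions by which $P_\lambda$ was introduced to $\Gamma$ lets one promote this to finite generation of $Q_\lambda$ itself. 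The maximal ideal $\m\cap R$ is finitely generated by surjectivity onto $T/\m^2$ plus Nakayama.

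The main obstacle is the height-two case of (b) for $P=P_\lambda$: bridging from the localized finite generation supplied by condition~(6) to global finite generation of $Q_\lambda$ requires careful use of the A-extension construction, together with the closedness hypothesis on auxiliary finitely generated ideals that do not lie in any $P_\mu$. A secondary subtlety in (a) is the bounding of heights of minimal primes of $IT$ to ensure a non-maximal one exists, which one handles by working with a single finitely generated $I$ and applying Krull's theorem.
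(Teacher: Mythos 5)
Your top-level skeleton (Proposition~\ref{prop1} plus the closedness dichotomy coming from Lemma 21 of \cite{sing}, with the hypotheses disposing of ideals avoiding the $P_\lambda$'s) matches the spirit of the paper, but the proposal has a genuine gap exactly where you flag ``the main obstacle'': you never actually show that $Q_{\lambda}=P_{\lambda}\cap R$ is finitely generated, and this is the heart of the lemma. The paper does not ``trace back through the chain of A-extensions''; it proves directly that $Q=(r,s,t)R$ by an element-wise computation. Concretely: given $h\in Q$, condition (6) of the N-pair definition gives $dh=e_1r+e_2s+e_3t$ with $d\notin Q$, while $h=\sigma_1r+\sigma_2s+\sigma_3t$ in $T$; comparing these, one uses that $((r,t)T:_Ts)=P_\lambda$, that the $(\m\cap R)$-primary ideal $(r,s,t,d)R$ is closed, that $s^2\in(r,t)R$ (this comes from the element $Y_3=Ar+Bs+Ct$ with $A,B,C\in R$ supplied by Lemma~\ref{P}, via $r^2+s^2+Art+Bst+Ct^2=0$), that $r,t,d$ is a system of parameters in the Cohen--Macaulay ring $T$, and that $(d,t)R$ is closed because $d\notin P_{\lambda_0}$ and $t\notin P_\mu$ for $\mu\neq\lambda_0$, so the first hypothesis applies to it. None of these ingredients appear in your outline, and without them the contradiction is not obtained. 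Note also that the paper reaches the prime $Q$ more efficiently than your two-case split: it takes a finitely generated $J$ with $JT$ maximal among non-closed counterexamples, applies Lemma 21 to get $JT\cap R$ infinitely generated, shows it is maximal among infinitely generated ideals, invokes Cohen's theorem to see it is prime, and identifies it as the contraction of a height-two prime minimal over $JT$, which the third hypothesis then forces to be some $P_{\lambda_0}$; the finite generation of $J$ and the minimality over $JT$ thus come for free.

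Two secondary points. In your case (a), the Krull height bound on minimal primes of $IT$ only yields a non-maximal prime when $I$ has at most two generators (here $\dim T=3$), and the ideal produced by the dichotomy is not known to be finitely generated, so that step as written does not go through (nor is it needed in the paper's arrangement, where only closedness of $(\m\cap R)$-primary ideals is used, obtained by noting such ideals contain $(z^k,w^l)R$, which lies in no $P_\lambda$). In your case (b) with $P$ of height two and $P\neq P_\lambda$, you assert that $P$ is minimal over $JT$ for some finitely generated $J\subseteq R$; this needs an argument (for instance, take $J$ finitely generated with $JT=(P\cap R)T$ and handle separately the possibility that $P$ is not minimal over it, where the contraction is already principal or zero), whereas the paper's maximal-counterexample route supplies the minimality automatically.
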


\begin{proof}
By Proposition 1 of \cite{sing}, it suffices to prove $IT\cap R=I$ for every finitely generated ideal $I$ of $R$ in order to get our conclusion.
Suppose the statement is false; we may choose
a finitely generated ideal $J$ such that $JT\cap R\neq J$ and $JT$ is maximal among such ideals.
By our hypothesis, $JT\subseteq P_{\lambda}$ for some $\lambda$.
As no $(\m\cap R)$-primary ideal is contained in any $P_{\lambda}$,
we have $IT\cap R=I$ whenever $I$ is primary to the maximal ideal.
Thus we may apply Lemma 21 of \cite{sing} to see that $JT\cap R$ is infinitely generated.
Also, if I is an ideal of $R$ which is infinitely generated and properly contains $JT\cap R$,
there is necessarily a finitely generated ideal $I_0\subset I$ such that $I\subset I_0T$.
Then $I_0T\cap R\neq I_0$, contradicting the maximality of $JT$.
Hence $JT\cap R$ is maximal among infinitely generated ideals of $R$.
Cohen's theorem tells us that $JT\cap R$ is a prime ideal $Q$.
It is easy to see that it is the contraction of one or more of the associated prime ideals of $JT$.
As the contraction of height one prime ideals are principal, it is the contraction of a height two
prime ideal which is minimal over $JT$.
By the hypothesis, if $P^{\prime}\neq P_{\lambda}$ for all $\lambda$,
$P^{\prime}\cap R$ cannot be a height two ideal contained in some $P_{\lambda}$.
However, necessarily $J$, and so also $JT$, is contained in some $P_{\lambda}$.
So $Q=P_{\lambda_0}\cap R$ for some $\lambda_0\in\Gamma$.

For notational ease, let $P=P_{\lambda_0}$ and $Q=P\cap R$.
Using the notation from the statement of Lemma~\ref{P}, $Q$ is locally generated by $r,s,t$ and $QT$ is generated by $r,s,t$.
We will show that $Q=(r,s,t)R$ and so contradict the notion that $Q$ is infinitely generated,
thus completing the proof.
Suppose $h\in Q$; we must show $h\in (r,s,t)R$.
There exists $d\in R-Q, e_1,e_2,e_3\in R$ such that $dh=e_1r+e_2s+e_3t$.
We also have $\sigma_1,\sigma_2,\sigma_3\in T$ such that $h=\sigma_1r+\sigma_2s+\sigma_3t$.
Thus $(e_1-d\sigma_1)r+(e_2-d\sigma_2)s+(e_3-d\sigma_3)t=0$.
It follows that $e_2-d\sigma_2\in  ((r,t)T:_Ts)=P$ and so $e_2\in P+dT=(r,s,t,d)T$.
As $(r,s,t,d)R$ is $(\m\cap R$)-primary, $e_2\in (r,s,t,d)T\cap R= (r,s,t,d)R$.
We can write $e_2=\rho_1^{\prime}r+\rho_2s+\rho_3d+\rho_4^{\prime}t$ with $\rho_1^{\prime},\rho_2,\rho_3,\rho_4^{\prime}\in R$ and setting
$\rho_1=e_1+s\rho_1^{\prime},\rho_4=e_3+s\rho_4^{\prime}$, we have
$dh=\rho_1r+(\rho_2 s+\rho_3 d)s+\rho_4t$ with $\rho_1,\rho_2,\rho_3,\rho_4\in R$.
Now it suffices to show $h-\rho_3 s\in (r,s,t)R$ and so we can reduce to the case $\rho_3=0$.
Then, as $s^2\in (r,t)R$, we can write $dh=\delta_1 r+\delta_2 t$ with $\delta_1,\delta_2\in R$.
Next observe that $P$ is the unique height two prime ideal of $T$ containing $(r,t)T$
and as $d\notin P$, $r,t,d$ is a system of parameters.
Therefore, as $T$ is Cohen-Macaulay, $\delta_1\in (d,t)T\cap R$.
Now $d\notin P_{\lambda_0}$ and $t\notin P_{\lambda}$ for $\lambda\neq\lambda_0$,
giving $(d,t)T\cap R=(d,t)R$ and $\delta_1\in(d,t)R$.
Therefore $dh=(\delta_3 d+\delta_4 t)r+\delta_2 t$ with $\delta_3,\delta_4\in R$.
Finally $d(h-\delta_3 r)\in tR$ and so $h\in (r,t)R$ as desired.
\end{proof}

\begin{thm}
There exists a local UFD $R$ with completion $T$ such that
$(R,\Gamma)=\bigcup(R_{\alpha},\Gamma_{\alpha})$ is an ascending union of N-pairs
which satisfies every defining condition of N-pair except that $|R|=|\mathbf{R}|$.
Moreover, there is an element $\omega\in \mathbf{R}$ such that for every $h\in T$, there exists an element
$(P,\{\omega+zh\})\in\Gamma$ constructed using Lemma~\ref{P}.
In particular, we have $((x,y,z)T,\{\omega\})\in\Gamma$.

\end{thm}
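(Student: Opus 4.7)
The plan is to construct $R$ by transfinite induction of length $\kappa := |\mathbf{R}|$, producing an ascending chain of N-pairs $(R_\alpha, \Gamma_\alpha)$ indexed by ordinals $\alpha < \kappa$. Initialize with $R_0 = \mathbf{Q}[z,w]_{(z,w)}$ and $\Gamma_0 = \emptyset$; apply the $u = 0$ variant of Lemma~\ref{P} to produce an N-pair $(R_1, \Gamma_1)$ containing the pair $((x,y,z)T, \{\omega\})$ for some $\omega \in \mathbf{R}$. This $\omega$ is the element promised by the theorem and is frozen for the remainder of the construction.

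Since $|T| = \kappa$, fix an enumeration $\{(h_\alpha, v_\alpha) : 1 \leq \alpha < \kappa\}$ of $T \times T$. At a successor ordinal $\alpha + 1$, apply Lemma~\ref{induction} to $(R_\alpha, \Gamma_\alpha)$ with parameters $h = h_\alpha$ and $v = v_\alpha$. The output is an A-extension $(R_{\alpha+1}, \Gamma_{\alpha+1})$ in which: an element $c \in R_{\alpha+1}$ with $v_\alpha - c \in \m^2$ exists; for every finitely generated ideal $I$ of $R_\alpha$ not contained in any $P_\lambda \in \Gamma_{\alpha+1}$, we have $IT \cap R_\alpha \subseteq IR_{\alpha+1}$; for every height-two prime $P'$ of $T$ minimal over such an $I$, $P' \cap R_{\alpha+1} \nsubseteq P_\lambda$ for all $\lambda$; and, provided $\omega + \bar z \bar h_\alpha$ is transcendental over $R_\alpha$ in $T/(x,y)T$, a new pair $(P, \{\omega + zh_\alpha\})$ is adjoined to $\Gamma$. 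At a limit ordinal $\lambda < \kappa$, take $(R_\lambda, \Gamma_\lambda) = \bigcup_{\beta<\lambda}(R_\beta, \Gamma_\beta)$, which remains an N-pair by Lemma~\ref{union} since $|\lambda| < \kappa$. Finally set $(R, \Gamma) = \bigcup_{\alpha < \kappa}(R_\alpha, \Gamma_\alpha)$; one more invocation of Lemma~\ref{union} with $\Omega$ of cardinality $\kappa$ yields every defining condition of an N-pair except the cardinality bound, while $|R| = \kappa = |\mathbf{R}|$. Since every $h \in T$ equals some $h_\alpha$, the moreover clause delivers $(P,\{\omega + zh\}) \in \Gamma$ for each such $h$, and $((x,y,z)T, \{\omega\}) \in \Gamma_1 \subseteq \Gamma$ was secured in the initial step.

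The main obstacle is the dichotomy inside Lemma~\ref{induction}: if $\omega + \bar z \bar h_\alpha$ happens to be algebraic over $R_\alpha$ in $T/(x,y)T$, then only conditions (1)--(3) are guaranteed, no new pair is attached, and the moreover clause could fail. The parenthetical remark in Lemma~\ref{induction} flags that the algebraic case cannot occur; one justifies this during the construction by a Lemma~\ref{u}-type cardinality estimate, noting that $|R_\alpha| < \kappa$ at every intermediate stage, so the set of $h \in T$ for which $\omega + \bar z \bar h$ becomes algebraic over $R_\alpha$ modulo $(x,y)T$ has cardinality strictly less than $|\mathbf{R}|$. Arranging the enumeration so that each $h_\alpha$ falls outside this exceptional set when processed --- for instance by interleaving auxiliary steps that first adjoin witnesses of transcendence before processing the actual element --- guarantees the transcendental branch fires at every stage, so the moreover clause holds and the construction completes.
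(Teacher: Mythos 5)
There is a genuine gap, and it is exactly at the point your proposal identifies as ``the main obstacle.'' Your plan to rule out the algebraic branch of Lemma~\ref{induction} by a stage-by-stage cardinality count plus a clever choice of enumeration cannot work. First, the relevant bad set is not small in the way you need: the set of $h\in T$ with $\omega+\overline{z}\overline{h}$ algebraic over $R_\alpha$ is a union of cosets of $(x,y)T$, hence has cardinality $|\mathbf{R}|$ whenever it is nonempty (only the set of bad residues modulo $(x,y)T$ is small). More fundamentally, smallness is irrelevant: every $h\in T$ must be processed at some stage, and algebraicity of $\omega+\overline{z}\overline{h}$ over a subring persists when the subring is enlarged, so reordering the enumeration, or delaying $h$, can never rescue an $h$ that has gone bad; and ``adjoining witnesses of transcendence'' is backwards --- enlarging $R_\alpha$ can only create algebraic relations, never destroy them. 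Since the intermediate rings are forced to grow in ways you do not control (elements $c$ with $v-c\in\m^2$ for every $v\in T$, elements closing ideals, etc.), you have no mechanism guaranteeing that $\omega+\overline{z}\overline{h_\alpha}$ is transcendental over $R_\alpha$ at the moment $h_\alpha$ is processed. The paper does not attempt this; instead it lets the algebraic case formally occur in the recursion (Lemma~\ref{induction} is stated so the construction proceeds either way) and rules it out \emph{a posteriori}: if some $h$ had been bad, then $\omega+\overline{z}\overline{h}$ would be algebraic over the full ring $R$, and a contradiction is derived using Lemma~\ref{primary} (the elements $d_m,e_m\in R$ with $(d_m,e_m,z^m)T=(x,y,z^m)T$), the closedness of ideals in $R$, and the transcendence of $\omega$ over $R/(x,y,z)T\cap R$ coming from the pair $((x,y,z)T,\{\omega\})$ installed at the first step. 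This final argument is the substantive content of the ``moreover'' clause, and it is missing from your proof; Lemma~\ref{primary} exists in the paper precisely to make it possible.

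A second, smaller omission: the theorem asserts that $R$ is a local UFD \emph{with completion $T$}, and your proposal never establishes the Noetherian/completion part. You record the conclusions (1)--(3) of Lemma~\ref{induction} at each successor stage but never use them; the paper feeds exactly these accumulated conditions (surjectivity of $R\to T/\m^2$, closedness $IT\cap R=I$ for finitely generated $I$ not contained in any $P_\lambda$, and the height-two prime condition) into Lemma~\ref{Noeth} to conclude that $R$ is Noetherian with $\widehat{R}\cong T$. Note that this step is not optional even for the ``moreover'' clause, since the closedness of ideals in $R$ used in the final contradiction argument depends on it.
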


\begin{proof}
We start with the N-subring $R_0=\mathbf{Q}[z,w]_{(z,w)}$ and note that $(R_0,\emptyset)$ is an N-pair.
Then we apply Lemma~\ref{P} to obtain an element $\omega\in \mathbf{R}$ and an N-subring $R_1$ such that
$(R_1,\{((x,y,z)T,\{\omega\})\})$ is an N-pair.
Next let $\Omega$ be the set $T\times T$, well-ordered so that $\forall\alpha\in \Omega$, $|\{\beta\in\Omega |\beta <\alpha\}|<|\mathbf{R}|$.
Let $\gamma(\alpha)=\sup\{\beta\in\Omega | \beta<\alpha\}$.
Define an ascending collection of pairs $\{(R_{\alpha},\Gamma_{\alpha}) | \alpha\in\Omega\}$ so that if $\gamma(\alpha)=\alpha$, then $R_\alpha=\bigcup_{\beta<\alpha}R_\beta$ and $\Gamma_\alpha=\bigcup_{\beta<\alpha}\Gamma_\beta$, while if $\gamma(\alpha)<\alpha$, then
$(R_{\alpha},\Gamma_{\alpha})$ is an A-extension of $(R_{\gamma(\alpha)},\Gamma_{\gamma(\alpha)})$ given by Lemma~\ref{induction} so that if $\gamma(\alpha)=(h,v)$,
then $v\in R_{\alpha}+\m^2$ and $\Gamma(\alpha)=\Gamma(\gamma(\alpha))\cup\{(P_{\alpha},\{\omega+zh\})\}$ whenever $\omega+\overline{z}\overline{h}$ is transcendental over $T/(x,y)T$.
Of course, if $\omega+\overline{z}\overline{h}$ is algebraic over $T/(x,y)T$, a case we will rule out later in the proof, then $v\in R_{\alpha}+\m^2$ and $\Gamma(\alpha)=\Gamma(\gamma(\alpha))$.

Let $R=\bigcup R_{\alpha}$ and $\Gamma=\bigcup \Gamma_{\alpha}$.
By Lemma~\ref{union}, $(R,\Gamma)$ satisfies all conditions to be an N-pair except the cardinality condition.
We still must show that $R$ is a Noetherian ring with completion $T$ to complete the proof of the first statement.
We do this by showing that $R$ satisfies the entire hypothesis of Lemma~\ref{Noeth}.
We begin by noting that it is clear from the construction that the natural
map $R\to T/\m^2$ is surjective.
Next we claim that if $I$ is any finitely generated ideal of $R$ such that $I\nsubseteq P_{\lambda}$ for all $\lambda\in\Gamma$, then $IT\cap R=I$.
To prove the claim, let $I=(y_1,\ldots,y_n)R$ and suppose $d\in IT\cap R$.
As $y_1,\ldots,y_n,d$ is a finite set, there exists $\alpha$ such that $y_1,\ldots,y_n,d\in R_{\alpha}$.
By Lemma~\ref{induction}, we get $d\in (y_1,\dots,y_n)R_{\alpha+1}$ and so $IT\cap R=I$,
proving the claim.
Finally let $P^{\prime}$ be a height two prime ideal of $T$ which is not equal to any $P_{\lambda}$
but is minimal over $IT$ for some finitely generated ideal $I$ of $R$, and
consider any specific $\lambda\in\Gamma$.
Again we let $I=(y_1,\ldots,y_n)R$ and note that there exists $\alpha$ such that $y_1,\ldots,y_n\in R_{\alpha}$ and $(P_{\lambda},V_{\lambda})\in \Gamma_{\alpha}$.
By Lemma~\ref{induction}, we get $P^{\prime}\cap R_{\alpha+1} \nsubseteq P_{\lambda}$.
Now Lemma~\ref{Noeth} tells us that $R$ is Noetherian and the natural homomorphism $\widehat{R}\to T$ is an isomorphism.

Finally we show that for each $h\in T$ there exists some $(P,\{\omega+zh\})\in\Gamma$.
If not, when we encountered $(h,v)$ in the recursive process, we discovered that
$\omega+\overline{z}\overline{h}\in T/(x,y)T$ was algebraic over the subring $R_{\alpha}$ in use at that time.
However, as $R_{\alpha}\subset R$, $\omega+\overline{z}\overline{h}\in T/(x,y)T$ is also algebraic over $R$.
We complete the proof by contradicting the assumption that
$\omega+\overline{z}\overline{h}\in T/(x,y)T$ is algebraic over $R$.

The algebraic assumption gives elements $r_n,\dots,r_0\in R$, not all zero, such that
$r_n(\omega+zh)^n+\cdots+r_0\in (x,y)T$.
As $(x,y)T\cap R=(0)$, $(r_n,\dots,r_0)R\nsubseteq (x,y,z^m)T$ for sufficiently large $m$.
Choose $m$ minimal with respect to the property that $(r_n,\dots,r_0)R\nsubseteq (x,y,z^m)T$.
By Lemma~\ref{primary}, there exists $d,e\in R$ such that $(d,e,z^m)T=(x,y,z^m)T$.
As ideals in $R$ are closed, $(x,y,z^m)T\cap R=(d,e,z^m)R$ and
$(x,y,z^{m-1})T\cap R=(d,e,z^{m-1})R$.
Let $J$ denote $(d,e,z^m)T$.
For each $j$, as $r_j\in (x,y,z^{m-1})T$, we can write $r_j=a_jd+b_je+c_jz^{m-1}$
with $a_j,b_j,c_j\in R$.
Since $r_n(\omega+zh)^n+\cdots+r_0\in J$ and $d,e\in J$,
$c_nz^{m-1}(\omega+zh)^n+\cdots+c_0z^{m-1}\in J$.
Thus $z^{m-1}(c_n\omega^n+\cdots+c_0)\in (x,y,z^m)T$
and so $c_n\omega^n+\cdots+c_0\in (x,y,z)T$.
However, by the choice of $m$, at least one $c_j\notin (x,y,z)T$.
This contradicts the fact that $\omega$ is transcendental over $R/Q$.
\end{proof}

\begin{thm}
Let $R$ be as constructed.  Then $R$ is not the homomorphic image of a regular local ring.
\end{thm}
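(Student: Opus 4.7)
The plan is to argue by contradiction using the introductory theorem. If $R$ were a homomorphic image of a regular local ring, then taking $T := \mathbf{R}[[x,y,z,w]]$ and $\pi\colon T \twoheadrightarrow \widehat R = T/(f)$ with $f := x^2+y^2$, we would obtain a regular local subring $S \subseteq T$ with $\widehat S = T$, with $f \in S$, and with $\pi|_S \colon S \twoheadrightarrow R$ of kernel $fS$. Inspecting the construction of $\varphi$ in the proof of that theorem, we may arrange $\mathbf{R} \cup \{x,y,z,w\} \subseteq S$. Faithful flatness of $S \hookrightarrow \widehat S = T$ gives $fT \cap S = fS$.

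The key step is to exploit the entry $\lambda_0 = ((x,y,z)T, \{\omega\}) \in \Gamma$ to produce a distinguished element of $S$. Lemma~\ref{P} (in the base case $q = \omega$) furnishes $r = x + \alpha z$, $s = y + \beta z$, and $A, B, C \in R$ with the $T$-identity
\[
r^2 + s^2 + Arz + Bsz + Cz^2 \;=\; (1+z\omega)\,f.
\]
Choosing lifts $\tilde A, \tilde B, \tilde C \in S$ and writing $\tilde A = A + f\eta_A$, $\tilde B = B + f\eta_B$, $\tilde C = C + f\eta_C$ in $T$, the element $\Theta := r^2 + s^2 + \tilde A r z + \tilde B s z + \tilde C z^2 \in S$ factors in $T$ as $\Theta = f\,u$, where a short computation yields
\[
u \;=\; 1 + z\omega + z^2 h + xa + yb, \qquad h := \alpha\eta_A + \beta\eta_B + \eta_C,\ a := z\eta_A,\ b := z\eta_B.
\]
Since $\Theta \in fT \cap S = fS$ and $f$ is a nonzerodivisor, $u \in S$; having constant term $1$, $u$ is a unit of $S$. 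The element $h \in T$ is pinned down by the chosen lift, but lies beyond our control.

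For each other $\lambda \in \Gamma$, applying Lemma~\ref{P} with $q = 0$ yields the analogue $r_\lambda^2 + s_\lambda^2 + A_\lambda r_\lambda t_\lambda + B_\lambda s_\lambda t_\lambda + C_\lambda t_\lambda^2 = f$ in $T$, where $t_\lambda = z + u_\lambda w$, $r_\lambda = x + \alpha_\lambda t_\lambda$, $s_\lambda = y + \beta_\lambda t_\lambda$. The parallel lifting procedure produces $\Psi_\lambda = f\,v_\lambda \in S$ with $v_\lambda \in S^\times$ and $v_\lambda \equiv 1 \pmod{P_\lambda T}$, since every error term contains a factor drawn from $\{r_\lambda, s_\lambda, t_\lambda\} \subseteq P_\lambda T$. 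Consequently $u/v_\lambda \in S$ is a unit whose image in $R/Q_\lambda$ (with $Q_\lambda := P_\lambda \cap R$), viewed inside $T/P_\lambda T = \mathbf{R}[[w]]$ via $z = -u_\lambda w$, equals $1 + z\omega + z^2\bar h$. Subtracting $1$ and dividing by the regular element $z$ in the one-dimensional domain $R/Q_\lambda$ yields $\omega + z\bar h$ in the quotient field of $R/Q_\lambda$.

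Because $h \in T$ is now fixed, the previous theorem supplies some $\lambda_h \in \Gamma$ with $V_{\lambda_h} = \{\omega + zh\}$; the defining N-pair condition then asserts that the image of $\omega + zh$ in $T/P_{\lambda_h} T$, namely $\omega + z\bar h$, is transcendental over $R/Q_{\lambda_h}$. This contradicts the conclusion of the previous paragraph applied with $\lambda = \lambda_h$. The main obstacle is the computation in step one: one must verify that, after factoring out $f$, the lift-induced error is absorbed precisely into terms of the form $z^2 h + xa + yb$, so that the $z$-coefficient of $u$ is forced to be exactly the prescribed $\omega$. It is this rigidity in the form of $u$ which the transcendence machinery built into the construction of $R$ is tailored to defeat.
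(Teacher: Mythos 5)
Your overall strategy matches the paper's: build from the $((x,y,z)T,\{\omega\})$ data a kernel element that factors as $f\cdot(1+z\omega+z^2h+xa+yb)$, build from the prime attached to that specific $h$ a second kernel element congruent to $f$ modulo $fP_{\lambda_h}$, divide, and contradict the transcendence of $\omega+\overline z\overline h$ over $R/P_{\lambda_h}\cap R$. But there is a genuine gap at the very first step: the claim that ``we may arrange $\mathbf{R}\cup\{x,y,z,w\}\subseteq S$'' (and likewise $f\in S$ with $\ker\pi|_S=fS$) is not merely unproved, it is impossible. Since $\pi|_S$ maps $S$ onto $R$ and the construction guarantees $(x,y)T\cap R=(0)$, the element $x\in S$ would force $\pi(x)=\overline x$ to be a nonzero element of $(x,y)T\cap R$; so $S$ can never contain $x$ or $y$, and there is no control over whether it contains the canonical copies of $z,w$, the relevant real scalars, or $f$ itself (the kernel of $\pi|_S$ is generated by some associate $f\cdot(\text{unit of }T')$, not necessarily by $f$). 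Your element $\Theta:=r^2+s^2+\tilde Arz+\tilde Bsz+\tilde Cz^2$ is only in $S$ if the canonical lifts $r,s,z$ lie in $S$, which is exactly what fails; without it the clean error form $u=1+z\omega+z^2h+xa+yb$ with $a=z\eta_A$, $b=z\eta_B$ is not available, and the same objection applies to your $\Psi_\lambda$ and the claim $v_\lambda\equiv 1 \pmod{P_\lambda T}$.

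Repairing this is not cosmetic: one must lift \emph{all} of $r,s,t,A,B,C$ to $S$, and then the error term acquires the additional summands $(2r+At)\rho+(2s+Bt)\sigma+(Ar+Bs+2Ct)\tau+A_0rt+B_0st+C_0t^2+fd$ coming from the unknown discrepancies $\rho,\sigma,\tau$ in the lifts of $r,s,t$. Showing these all land in $(x,y,t^2)T$, so that the coefficient of $z$ (respectively $t$) in the unit is still exactly $q$, is precisely where the paper uses the peculiar defining formulas of $A,B,C$, namely that $2\alpha+A$, $2\beta+B$, and $A\alpha+B\beta+2C$ all lie in $(x,y,t)T$. Your shortcut bypasses the central difficulty that those formulas were designed to solve, so the step ``the $z$-coefficient of $u$ is forced to be exactly $\omega$'' is not established. (A smaller point: you do not need $f\in S$ or $u\in S$ individually; as in the paper, it suffices that the ratio of the two kernel elements lies in $S$, since both are $f$ times units of $\widehat S$ and the kernel of $\pi|_S$ is a principal height-one prime. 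Also, one should note why $z\notin P_{\lambda_h}$, i.e.\ $u_{\lambda_h}\neq 0$, before dividing by $\overline z$; this follows because the primes in $\Gamma$ are distinct and the new prime is chosen with $(x,y,z+uw)T\cap R=(0)$.)
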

\begin{proof}
Assume the theorem is false.
By the theorem in the introduction, we have a commutative diagram
\[
\xymatrixrowsep{2pc}
\xymatrixcolsep{5pc}
\xymatrix{
S \ar@{.>}[r]^\subseteq\ar@{.>>}[d]_{\pi\vert_S} & \mathbf{R}[[x,y,z,w]] \ar@{->>}[d]^{\pi}\\
R\ar[r]^\subseteq & \mathbf{R}[[x,y,z,w]]/(x^2+y^2) \\
}
\]

Let $f=x^2+y^2\in \mathbf{R}[[x,y,z,w]]$.
Throughout the construction of $R$, we chose all elements of interest to be members of the
vector space $\mathbf{R}+x\mathbf{R}+y\mathbf{R}+z\mathbf{R}+w\mathbf{R}\subset T$.
For these elements, there is a natural lifting to $\mathbf{R}[[x,y,z,w]] $.
Of course, these canonical lifts are unlikely to be in $S$, but we employ them to simplify notation.
So for example, if $A^L$ denotes a lifting of $A$, we can write $A^L=A+fA_0$ for some
$A_0\in \mathbf{R}[[x,y,z,w]] $.

Consider $((x,y,t_{\lambda})T,\{\omega+zh_{\lambda}\})\in \Gamma$.
We have $P_{\lambda}=(x,y,t_{\lambda})T=(r_{\lambda},s_{\lambda},t_{\lambda})T$ with
$t_{\lambda}=z+u_{\lambda}w$, $r=x+\alpha_{\lambda}t_{\lambda}$, and $s=y+\beta_{\lambda}t_{\lambda}$.
As nearly everything depends on $\lambda$, we will suppress the subscript in our calculations.
We begin by lifting the equation $0=r^2+s^2+Art+Bst+Ct^2$ to the full power series ring.
Let $A^L$ denote the lift of $A$, etc. and we get an element $\Theta$ in the kernel of $\pi|_S$
such that
$\Theta=(r^L)^2+(s^L)^2+A^Lr^Lt^L+B^Ls^Lt^L+C^L(t^L)^2=(r+f\rho)^2+(s+f\sigma)^2+
(A+fA_0)(r+f\rho)(t+f\tau)+(B+fB_0)(s+f\sigma)(t+f\tau)+(C+fC_0)(t+f\tau)^2=
r^2+s^2+Art+Bst+Ct^2+f(2\rho r+2\sigma s +Ar\tau+At\rho+A_0rt+Bs\tau+Bt\sigma+B_0st
+2Ct\tau+C_0t^2)+f^2d$ where $\rho,\sigma,\tau,A_0,B_0,C_0\in T$ and
$d\in T$ is a sum of terms which we don't need to keep track of individually.
Recall that in the proof of Lemma~\ref{P}, we actually did the calculation in $\mathbf{ R}[[x,y,z,w]]$
and determined that $r^2+s^2+Art+Bst+Ct^2=f(1+tq)$ and so
$\Theta=f(1+tq+(2r+At)\rho+(2s+Bt)\sigma+(Ar+Bs+2Ct)\tau+A_0rt+B_0st+C_0t^2+fd)$.
As $r=x+\alpha t$ and $s=y+\beta t$,
$\Theta\in f(1+tq+(2\alpha+A)t\rho+(2\beta+B)t\sigma+(A\alpha+B\beta+2C)t\tau+(x,y,t^2)T)$.
Also, from the defining equations for $A,B,C$, we see that
$2\alpha+A, 2\beta+B, A\alpha+B\beta+2C\in (x,y,t)T$ and so
$\Theta\in f(1+tq+(x,y,t^2)T)$.

Now we consider the specific $((x,y,z)T,\{\omega\})\in\Gamma$, which we will refer to as $\lambda=0$.
As $q_0=\omega$, we have $\Theta_0\in f(1+z\omega+z^2h+(x,y)T)$ for some $h\in T$.
There is another element in $\Gamma$ corresponding to $h$ which it is convenient to denote $\lambda=h$.
(Admittedly, we need a different name if $h$ happens to equal zero.)
This is $(P_h,\{\omega+zh\})$.
Clearly $\Theta_h\in f(1+P_h)$.
As both $\Theta_0$ and $\Theta_h$ are elements of $S$ and they are unit multiples of each other,
we see that $\Theta_0/\Theta_h\in S$.
The map $S\to R\to R/P_h\cap R$ clearly takes $\Theta_0/\Theta_h$ to $1+\overline{z}\omega+\overline{z}^2\overline{h}$.
Thus $\overline{z}(\omega+\overline{z}\overline{h})\in R/P_h\cap R$.
This contradicts the fact that $\omega+\overline{z}\overline{h}$ is transcendental over $R/P_h\cap R$,
completing the proof.

\end{proof}

\end{document}